\newcommand{\abs}[1]{\left|#1\right|}
\newcommand{\xR}{\mathbb{R}}
\newcommand{\xN}{\mathbb{N}}
\newcommand{\xZ}{\mathbb{Z}}
\newcommand{\tX}{\tilde{X}}
\newcommand{\tY}{\tilde{Y}}
\newcommand{\tZ}{\tilde{Z}}
\newcommand{\tx}{\tilde{x}}
\newcommand{\ty}{\tilde{y}}
\let\tXN\tXn
\let\tYN\tYn
\newcommand{\tu}{\tilde{u}}
\newcommand{\tq}{\tilde{q}}
\newcommand{\tT}{\tilde{T}}
\newcommand{\bigO}{\mathcal{O}}
\newcommand{\ind}[1]{\mathbf{1}_{#1}}
\newcommand{\prb}[1]{\mathbb{P}\left[#1\right]}
\newcommand{\esp}[1]{\mathbb{E}\left[#1\right]}
\newcommand{\espX}[1]{\mathbb{E}_x\left[#1\right]}
\newcommand{\floor}[1]{\lfloor#1\rfloor}
\newcommand{\SET}[1]{\left\{#1\right\}}
\newcommand{\PAR}[1]{\left(#1\right)}
\newcommand{\cov}[1]{\mathbf{Cov}\left(#1\right)}
\newcommand{\var}[1]{\mathbf{Var}\left(#1\right)}
\newcommand{\varX}[1]{\mathbf{Var}_x\left(#1\right)}
\newcommand{\covX}[1]{\mathbf{Cov}_x\left(#1\right)}
\newcommand{\propNorm}{\mathbf{x}_N}
\newcommand{\stateSpace}{\mathcal{S}_n}
\newtheorem{thrm}{Theorem}
\newtheorem{proposition}[thrm]{Proposition}
\newtheorem{lmm}[thrm]{Lemma}
\newtheorem{crllr}[thrm]{Corollary}
\newtheorem{dfn}[thrm]{Definition}
\newtheorem{remark}[thrm]{Remark}
\title{A Wright-Fisher model with indirect selection}
\author{Ludovic Goudenège and Pierre-André Zitt}
\begin{document}

\maketitle
\begin{abstract}%
We study a generalization of the Wright--Fisher model in which some individuals
adopt a behavior that is harmful to others without any direct advantage for themselves.
This model is motivated by studies of spiteful behavior in nature, including
several species of parasitoid hymenoptera in which sperm-depleted males continue to mate de-
spite not being fertile.  

We first study a single reproductive season, then use it as a building block for a generalized
Wright--Fisher model. In the large population limit, for male-skewed sex ratios,
we rigorously derive the convergence of the renormalized process to a diffusion with a frequency-dependent selection and genetic drift.
This allows a quantitative comparison of the indirect
selective advantage with the direct one classically considered in the Wright--Fisher model.   
  
From the mathematical point of view, each season is modeled by a mix
between samplings with and
without replacement, and analyzed by a sort of ``reverse numerical
analysis'', viewing a key recurrence relation as a discretization scheme for
a PDE. The diffusion approximation is then obtained by classical methods.

  \bigskip

\textit{Keywords:} Wright--Fisher model; diffusion approximation; reverse numerical analysis

\medskip

\textit{MSC2010:} 60J20 ; 60J70; 92D15
\end{abstract}

\section{Introduction: models and main results}

\subsection{Harmful behaviours and population genetics}
The object of population genetics is to understand how the genetic composition
of a population changes through time in response to mutation, natural selection and
demographic stochasticity (``genetic drift'') and mutations. In the simplest case,
consider a gene with a haploid locus segregating two alleles (say ``white'' and ``black''),
which affect an individual's phenotype. Here we are
interested in the changes of the
proportion of individuals carrying the white allele over generations. 
One of the simplest stochastic models for this evolution is 
the classical Wright--Fisher model for genetic drift%
\footnote{%
  Let us recall the unfortunate polysemy of the word ``drift''. 
  In the biological literature ``genetic drift'' corresponds to the noise-induced variations.
  When using a stochastic model, this is at odds with the 
  ``drift'' of a diffusion, i.e.~the first order term that 
models a deterministic force.} %
(its precise definition is recalled below in Section~\ref{sec:recallWF}). 
It is simple enough that a very detailed mathematical analysis can be performed
(see for example the monographs \cite{Dur08} or \cite{Eth11}, where many other questions 
and models are studied from a mathematical point of view). 
Many variations of this model have been studied, adding selection 
and mutation to the picture. Classically, selection has been added to this model by 
stipulating that one of the alleles is $(1+\beta)$ 
more likely to be chosen for the next generation
than the other allele. 
This models a direct advantage: for example, the eggs carrying the white
allele may have more chance to mature. 

In many biological settings, individuals perform actions that may harm others
without giving the perpetrator any direct advantage. For example, males of
several invertebrate species  have a limited sperm stock. Surprisingly, they
have been reported to continue to attempt mating with virgin females while
being completely sperm depleted \cite{Damiens2006Why, Steiner2008Mating}.
Obviously, this behaviour does not aim to fertilize the eggs of these virgin
females. However, in these species, copulation (with or without sperm release)
has the property of stopping female sexual receptivity. This, for instance can
occur as a behavioural response of the female or as a consequence of toxic
seminal fluids or plugs inserted in female genitalia by males
\cite{Rice1996Sexually, Radhakrishnan2009Multiple}. Males can also guard the
female during her receptivity period without copulating with her. These male
behaviours do not increase the absolute number of eggs they fertilize.
However, because these actions limit the ability of other males to
fertilize eggs, it has been suggested that they may have evolved as a male
mating strategy to increase the relative number of offspring sired by individuals
that use this strategy. This model is inspired by a model
for the evolution of spiteful behaviour (see, e.g., \cite{Ham70}, \cite{Dio07}
and \cite{FWR01} for discussions of Hamiltonian spite).

Our aim is to analyze a variation of the Wright--Fisher model
where such an effect appears. Quite interestingly, this model will prove to 
be equivalent (in the large population limit) to a model with frequency dependent selection. 

In the remainder of this introduction we first define a model for one generation, 
where a certain number of females visit a pool of males, 
some of which carry the black allele that codes for the ``harmful'' behaviour. 
When the number of individuals is large we can analyze precisely the
reproduction probabilities for each type of individual. Finally
we show how to adapt the Wright--Fisher model to our case, and state
our main result, namely a diffusion limit for the renormalized multi-generation model. 

\subsection{Basic model}
\label{sec:basic_model}
In the basic model, suggested by F.-X.~Dechaume-Moncharmont and M.~Galipaud
\footnote{Personal communication.},  consider an urn with~$w$ white balls
and~$b$ black balls.  All balls begin as ``unmarked''. Draw~$f$ times from this
urn, with the following rule:
\begin{itemize}
	\item if the ball drawn is white, mark it and remove it from the urn;
	\item if it is black and unmarked, mark it and put it back in the urn;
	\item if it is black and already marked, put it back in the urn. 
\end{itemize}
After the $f$ draws, call~$X$ the number of
marked white balls and~$Y$ the number of marked black balls. 

This models a reproductive season. The balls represent males, and each draw
corresponds to a reproduction attempt by a different female. The marks
represent a successful reproduction.  The white balls ``play fair'': it they
are chosen by a female, they reproduce and retire from the game.  The black
balls, even after reproduction, ``stay in the game'': they may be chosen again
in subsequent draws. Even if it is chosen multiple times, a black ball only
reproduces once, so that black balls do not get a direct reproductive advantage
from their behaviour.  In particular, if the colors of all the other balls are
fixed, the probability of reproduction does not depend on the ball's color.
However, the black balls ``harm'' all the other balls, possibly depriving them
of reproduction attempts. 
The variables~$X$ and~$Y$ count the number of white/black males that have reproduced. 

\begin{remark}
  [Simplification] This model is of  course very simplified. In particular
  the mating phenotype of the males only depends on a haploid locus which is
  paternally inherited; this assumption does not hold for example for male
  hymenopteran parasitoids, who inherit their genomes from their mothers. 
  For simplicity we restrict ourselves to one model, keeping in mind that
  other models may lead to different expressions of the drift and 
  variance for the diffusion limit. 
\end{remark}

To compare the two strategies, we begin by comparing two individuals. In an urn 
with $w$ white balls and $b$ black balls, we look at one particular white ball
(Walt) and one particular black ball (Bob). Define the probabilities of 
successful reproduction by:
\begin{align*}
  p_w(w,b,f) &= \prb{\text{Walt is chosen at least once in the $f$ draws}} \\
  p_b(w,b,f) &= \prb{\text{Bob is chosen at least once in the $f$ draws}} 
\end{align*}

\begin{thrm}
  \label{thm:avantage}
  The ``harmful'' males have a fitness advantage, in the sense that:
  \[ p_b(w,b,f) \geq p_w(w,b,f).\]
  The inequality is strict if $f\geq 2$ and $w,b\geq 1$. 
\end{thrm}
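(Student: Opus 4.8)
The plan is to realise the whole reproductive season on a single i.i.d.\ sequence by rejection sampling, reduce the claim to a stochastic domination between two counting variables, and prove that domination by an elementary pointwise coupling. For the representation, let $(c_i)_{i\ge1}$ be i.i.d., each uniform on the $w+b$ balls, and call an index $i$ \emph{effective} if $c_i$ is black, or if $c_i$ is a white ball occurring for the first time in $c_1,\dots,c_i$. Since a white ball leaves the urn exactly after its first draw, the standard rejection-sampling argument shows that the balls sitting at the first $f$ effective indices have precisely the law of the $f$ successive draws of the basic model. Hence a given ball $\beta$ reproduces during the season if and only if the number of effective indices strictly before the first occurrence of $\beta$ is at most $f-1$, its own first occurrence being always effective.

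Next I reduce the problem. Apply the above to the target ball, which is Walt in one case and Bob in the other: its first-occurrence time $T$ is geometric with success probability $1/(w+b)$, hence has the same law in both cases. Conditionally on $\{T=\ell+1\}$, the draws $c_1,\dots,c_\ell$ are i.i.d.\ uniform on the $w+b-1$ other balls --- which are $w-1$ white and $b$ black for Walt, but $w$ white and $b-1$ black for Bob --- and the number of effective indices among them is $\ell$ minus the number $R(\ell)$ of draws of an already-seen white ball. Writing $R_w(\ell),R_b(\ell)$ for these counts in the two cases, this gives
\[
  p_w(w,b,f)=\sum_{\ell\ge0}\prb{T=\ell+1}\,\prb{R_w(\ell)\ge\ell-f+1}
\]
and the same formula for $p_b$ with $R_b$ in place of $R_w$. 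It therefore suffices to show that $R_b(\ell)$ stochastically dominates $R_w(\ell)$ for every $\ell$.

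For this, run both sides on a common i.i.d.\ sequence over one label set of size $w+b-1$ in which a single distinguished label is white on the ``$b$'' side and black on the ``$w$'' side, all other labels keeping their colour on both sides; then every index counted in $R_w(\ell)$ is also counted in $R_b(\ell)$ (a white label on the ``$w$'' side is \emph{a fortiori} white on the ``$b$'' side), so $R_b(\ell)\ge R_w(\ell)$ surely. This already yields $p_b\ge p_w$; incidentally the same device gives the identity $p_w(w,b,f)=p_b(w-1,b+1,f)$. For strictness when $f\ge2$ and $w,b\ge1$, keep only the term $\ell=f$, of positive weight $\prb{T=f+1}$, and restrict to the positive-probability event that $c_1,\dots,c_f$ are all equal to the distinguished label: there $R_b(f)=f-1\ge1$ while $R_w(f)=0$, so this event lies in $\{R_b(f)\ge1\}\setminus\{R_w(f)\ge1\}$ and the $\ell=f$ contribution to $p_b-p_w$ is strictly positive.

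The step I expect to require the most care is the representation itself: correctly justifying the rejection-sampling reformulation and doing the bookkeeping around ``effective'' indices and first-occurrence times (in particular checking that the $c_i$ for $i<T$ really are i.i.d.\ uniform on the remaining balls). Once that is in place, the reduction, the coupling and the strictness witness are all short.
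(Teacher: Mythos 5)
Your proof is correct, and it takes a genuinely different route from the one in the paper. The paper first reduces both probabilities to a single auxiliary function $q(w,b,f)$ --- the probability that an added red ball is never drawn --- via $q_w(w,b,f)=q(w-1,b,f)$ and $q_b(w,b,f)=q(w,b-1,f)$, and then compares $q(w,b-1,f)$ with $q(w-1,b,f)$ by coupling the two urn processes on a shared stream of uniform labels, with a ``catch-up'' rule for labels removed from one urn but not the other; the monotonicity comes from propagating the invariant that the set of balls remaining in the first urn is always contained in that of the second. You instead linearize the whole season onto one i.i.d.\ sequence by rejection sampling, condition on the first-occurrence time $T$ of the target (which plays the same role as the paper's red ball in removing the target's own colour from the comparison), and reduce the claim to the sure inequality $R_b(\ell)\ge R_w(\ell)$ between two deterministic functionals of the \emph{same} sequence. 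Your version buys a coupling with no invariant to propagate --- the comparison is pointwise and immediate once the representation is in place --- and the decomposition over $\ell$ writes $p_b-p_w$ as a sum of nonnegative terms, which makes the strictness witness (all $f$ pre-$T$ draws hitting the recoloured label) completely explicit; the identity $p_w(w,b,f)=p_b(w-1,b+1,f)$ also drops out for free, consistent with the paper's relations through $q$. What the paper's version buys is reuse: the function $q$ and its recurrence \eqref{eq=reccQr} are the backbone of the asymptotic analysis of Section~\ref{sec:singleLimit}, whereas your effective-index bookkeeping serves only this theorem. The one step to write out carefully is indeed the one you flag: the rejection-sampling representation (given the set $S_k$ of balls still in the urn, the ball at the next effective index is uniform on $S_k$ and independent of the past, and for $i<T$ effectiveness does not involve the target ball), which is standard and correct as you state it.
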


\subsection{Large population limit}
To quantify the advantage given by the ``harmful'' behaviour,
it is natural to look at a large population 
limit, when the number of black balls (``harmful'' males), white balls (regular males)
and the number of draws ($f$ i.e. females) go to infinity, 
while the respective proportions converge. We can describe the limiting behaviour 
of $p_b$ and $p_w$, and more importantly of the difference $p_b - p_w$, 
in terms of the solution $v$ of a specific PDE. 
To define this function $v$ and state the approximation result 
we need additional notation.  The numbers of individuals
$(w,b,f)$ will correspond in the continuous limit to proportions $(x,y,z)$ 
in  the set:
\[ \Omega = \left\{ (x,y,z)\in\xR_+ : x+y+z \leq 1\right\}.\]
For $(x,y,z) \in \Omega$, with $y>0$, we will prove
below (see Theorem~\ref{thm:propertiesOfU}) that
the equation
\begin{equation}
  x(1-e^{-t}) +  yt = z
  \label{eq=defT}
\end{equation}
has a unique solution $T(x,y,z)\in(0,\infty)$.
Define two functions $u$ and $v$ on $\Omega$ by:
\begin{align}
  u(x,y,z) &= \exp\left( - T(x,y,z) \right), &
  v(x,y,z) &= 1 - u(x,y,z).
  \label{eq=defUv}
\end{align}
A heuristic derivation of the expression of $u$, $v$ and $T$ will be
given below in Remark~\ref{rem:jay}. 

For any ``population size'' $N$ we will consider functions defined on the following 
discretization of $\Omega$~:
 \begin{align*}
   \Omega_N &= \left\{(w,b,f)\in\xZ_+^3 : w+b+f \leq N\right\}.
 \end{align*}
For any function $g:\Omega\to \xR$, we denote by $g^N$ the discretization
\begin{equation}
\begin{aligned}
  g^N : \Omega_N &\to \xR \\
        (w,b,f)  &\mapsto g\left( \frac{w}{N}, \frac{b}{N}, \frac{f}{N} \right).
\end{aligned}
  \label{eq:defDiscretization}
\end{equation}
If $p$ is a function on $\Omega_N$, 
we denote by $\delta_xp$, $\delta_yp$ the discrete differences:
\begin{align}
  \delta_x p(w,b,f) &= p(w+1,b,f) - p(w,b,f) &
  \delta_y p(w,b,f) &= p(w,b+1,f) - p(w,b,f). 
  \label{eq=defDeltaX}
\end{align}
Finally, most of the bounds we prove are uniform on specific subsets of $\Omega$ or $\Omega_N$. 
For any $y_0>0$, and any $s<1$, we define:
\begin{align*}
  \Omega (y_0)   &= \SET{ (x,y,z)\in \Omega:  y\geq y_0};\\
  \Omega_N (y_0) &= \SET{ (w,b,f)\in\xN^3 :  \PAR{\frac{w}{N}, \frac{b}{N}, \frac{f}{N}} \in \Omega(y_0)} ; \\
  \Omega(s)      &= \SET{ (x,y,z) \in \Omega:  z\leq s(x+y) \text{ and } x-z \geq (1-s)/(2+2s) }           ; \\
  \Omega_N(s)    &= \SET{ (w,b,f)\in \xN^3 : \PAR{\frac{w}{N}, \frac{b}{N}, \frac{f}{N}} \in \Omega(s)}.
\end{align*}
\begin{remark}[On the sets $\Omega(y_0)$ and $\Omega(s)$]
  Let us repeat that $x$, $y$ and $z$ are the continuous analogues of $w$,
  $b$ and $f$.  In this light, $y_0$ corresponds to a minimal proportion of
  ``harmful'' males, and $s$ to a maximal sex ratio. The second condition
  appearing in the definition of $\Omega(s)$ is less natural: it is a way of
  ruling out degenerate points where both $x$ and $y$ are small, which will be
  crucial for finding good bounds on $u$, $v$ and their derivatives (\emph{cf.}
  Theorem~\ref{thm:propertiesOfU}), while keeping an essential ``stability''
  property (\emph{cf.} the proof of the controls of errors at the end of
  Section~\ref{sec:convergence}). 
\end{remark}

Now we can state the first asymptotic result.

\begin{thrm}
  \label{thm=approximation}
  For any $y_0 >0$, there exists a constant $C(y_0)$ such that for all $N$,
  \[
    \forall (w,b,f)\in \Omega_N(y_0), 
    \quad
    \begin{aligned}
    \abs{ p_w(w,b,f) - v^N(w,b,f)} \leq \frac{C(y_0)}{N}, \\
    \abs{ p_b(w,b,f) - v^N(w,b,f)} \leq \frac{C(y_0)}{N},
  \end{aligned}
\]
where $v^N$ is the discretization of $v$ (see \eqref{eq=defUv} and \eqref{eq:defDiscretization}). 
  Moreover, the difference of fitness is of order $1/N$, and more precisely:
  \begin{align}
    \label{eq=cvg_differences}
    \forall (w,b,f) \in \Omega_N(y_0), \quad
    \abs{ p_b(w,b,f) - p_w(w,b,f) - \frac{1}{N}(\partial_xv - \partial_yv)^N(w,b,f)} \leq \frac{C(y_0)}{N^2}.
  \end{align}
  For any $s<1$, the same bounds hold uniformly on all $\Omega_N(s)$, 
  with $C(y_0)$ replaced by a constant $C(s)$ that only depends on $s$. 
\end{thrm}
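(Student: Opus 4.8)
The plan is to set up exact recurrence relations for $p_w$ and $p_b$ in the three variables $(w,b,f)$, and then show that the discretization $v^N$ of the function $v$ from \eqref{eq=defUv} is an approximate solution of the same recurrences up to an error of order $1/N^2$ per step, so that a discrete maximum/stability argument propagates this into a global $O(1/N)$ bound. Concretely: conditioning on the first of the $f$ draws, Walt is chosen with probability $1/(w+b)$ (and then reproduces for sure); if a white ball other than Walt is drawn we move to the urn $(w-1,b,f-1)$; if a black ball is drawn we move to $(w,b,f-1)$ but with the same set of relevant individuals. This gives a relation of the schematic form $p_w(w,b,f) = \tfrac{1}{w+b} + \tfrac{w-1}{w+b}p_w(w-1,b,f-1) + \tfrac{b}{w+b}p_w(w,b,f-1)$, with an analogous one for $p_b$ where Bob, being black, returns to the urn, so the "black draw" branch keeps $b$ fixed and the "Bob drawn" event still only yields one reproduction. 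The boundary conditions are $p_w = p_b = 0$ when $f=0$ and the degeneracies when $w$ or $b$ vanish.

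Next I would verify that $v$ solves the corresponding PDE. Writing $T = T(x,y,z)$ for the implicit function defined by \eqref{eq=defT}, one differentiates $x(1-e^{-T}) + yT = z$ to get $\partial_z T = 1/(x e^{-T} + y)$, $\partial_x T = -(1-e^{-T})/(xe^{-T}+y)$, $\partial_y T = -T/(xe^{-T}+y)$, and since $u = e^{-T}$, $v = 1-u$, one checks that $v$ satisfies a first-order linear transport-type PDE — the continuous limit of the recurrence above after Taylor expansion, i.e. something like $(x+y+z)\,\text{(difference operators)} \approx $ a combination of $\partial_x v$, $\partial_y v$ and $\partial_z v$ with coefficients coming from the $1/(w+b)$, $(w-1)/(w+b)$, $b/(w+b)$ weights. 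Plugging $v^N$ into the discrete recurrence and Taylor-expanding to second order, the zeroth- and first-order terms cancel precisely because $v$ solves the PDE, leaving a residual bounded by $\sup$ of the second derivatives of $v$ times $1/N^2$; this is where Theorem~\ref{thm:propertiesOfU} is used, since it supplies the uniform bounds on $u$, $v$ and their first and second derivatives on $\Omega(y_0)$ (resp. $\Omega(s)$) — note the bounds must blow up as $y_0\to 0$, which is why the restriction to $\Omega_N(y_0)$ is needed.

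Then comes the stability/propagation step: both $p_w - v^N$ and $p_b - v^N$ satisfy the same recurrence as $p_w$ (resp. $p_b$) but with an inhomogeneous term of size $O(1/N^2)$ at each node and zero boundary data. Since the recurrence expresses the value at $(w,b,f)$ as a convex combination of values at nodes with strictly smaller $f$, iterating down to $f=0$ accumulates at most $f \le N$ error terms of size $C/N^2$, giving the claimed $C/N$ bound — provided the whole descent path stays inside the good region $\Omega_N(y_0)$ (resp. $\Omega_N(s)$). This is the crux: I must check that the "white draw" move $(w,b,f)\mapsto(w-1,b,f-1)$ and the "black draw" move $(w,b,f)\mapsto(w,b,f-1)$ preserve membership in $\Omega_N(y_0)$ or $\Omega_N(s)$. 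For $\Omega(y_0)$ this is delicate because decreasing $w$ while keeping $b$ fixed increases the proportion $y = b/(w+b)$-type quantity only if one normalizes correctly; here the normalization is by the fixed $N$, so $b/N$ is unchanged and the condition $b/N \ge y_0$ is trivially preserved — good. For $\Omega(s)$ the conditions $z \le s(x+y)$ and $x - z \ge (1-s)/(2+2s)$ under $(w,b,f)\mapsto(w-1,b,f-1)$ and $(w,b,f)\mapsto(w,b,f-1)$ require a short computation, and the peculiar constant $(1-s)/(2+2s)$ is presumably exactly what makes this stability hold; I expect this bookkeeping to be the main technical obstacle, together with handling the boundary strata $w=0$ or $b=0$ where the recurrences degenerate.

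Finally, for the difference $p_b - p_w$: subtract the two exact recurrences. The point is that $p_b$ and $p_w$ satisfy recurrences that differ only in the "non-self black draw" branch — in $p_w$'s recurrence a black draw leaves the urn at $(w,b,f-1)$ with the same distinguished white ball, whereas for $p_b$ a black draw that is not Bob... actually the recurrences for $p_w$ and $p_b$ have the \emph{same} transition structure except that the "reproduce" event for Bob sits in the branch where $b$ is not decremented. Carefully, $p_b - p_w$ then satisfies a recurrence whose inhomogeneous term is, to leading order, $\tfrac{1}{N}$ times a first-order difference of $v^N$, which Taylor-expands to $\tfrac1N(\partial_x v - \partial_y v)^N + O(1/N^2)$; re-running the stability argument on this difference yields \eqref{eq=cvg_differences}. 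Throughout, the $\Omega(s)$ version is obtained by the identical argument, simply invoking the $\Omega(s)$-bounds from Theorem~\ref{thm:propertiesOfU} in place of the $\Omega(y_0)$-bounds.
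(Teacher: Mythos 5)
Your treatment of the first two bounds is a sound variant of the paper's argument: the paper routes everything through the auxiliary function $q$ of \eqref{eq=defQr} (the ``red ball'' probability), whose recurrence \eqref{eq=reccQr} is homogeneous and serves both $p_w$ and $p_b$ at once via \eqref{eq=pw_pb_and_q}, whereas you work directly with the two inhomogeneous recurrences for $p_w$ and $p_b$; the consistency/stability analysis is the same in spirit, and your observations about the stability of $\Omega_N(y_0)$ and $\Omega_N(s)$ under the moves $(w,b,f)\mapsto(w-1,b,f-1)$ and $(w,b,f)\mapsto(w,b,f-1)$ match the paper's.

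However, there is a genuine gap in your proof of \eqref{eq=cvg_differences}. Subtracting your two recurrences, the difference $d=p_b-p_w$ satisfies a recurrence with convex weights $\frac{w-1}{w+b}$, $\frac{b-1}{w+b}$ and an inhomogeneous term of the form $\frac{1}{w+b}\bigl[p_b(w-1,b,f-1)-p_w(w,b,f-1)\bigr]$, which is $\frac{1}{w+b}$ times a \emph{first-order discrete difference of the exact functions} $p_b,p_w$ (in the paper's notation, of $q$), not of the smooth discretization $v^N$. You cannot Taylor-expand a discrete difference of $p_w$; and the first part of the theorem only gives $p_w-v^N=\bigO(1/N)$, so replacing that bracket by the corresponding difference of $v^N$ incurs an error $\bigO(1/N)$, hence a per-step residual $\bigO(1/N^2)$, which after $f\le N$ steps accumulates to $\bigO(1/N)$ --- enough to recover $p_b-p_w=\bigO(1/N)$ but not the refined statement with remainder $\bigO(1/N^2)$. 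What is missing is precisely the second-order control on the discrete differences, $\abs{N\delta_x q-(\partial_x u)^N}\le C/N$ and $\abs{N\delta_y q-(\partial_y u)^N}\le C/N$ (equations \eqref{eq:deltaXqConverge}--\eqref{eq:deltaYqConverge} of Theorem~\ref{thm:qConvergeVersU}), which the paper obtains by a second round of the same machinery: derive a recurrence for $\delta_x q$ (resp.\ $\delta_y q$), derive the PDE \eqref{eq=edpDxu} (resp.\ \eqref{eq=edpDyu}) satisfied by $\partial_x u$ (resp.\ $\partial_y u$), and rerun the stability argument, using the already-established approximation of $q$ by $u^N$ to control the new inhomogeneous terms. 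Without this extra layer your argument cannot reach the $\bigO(1/N^2)$ precision claimed in \eqref{eq=cvg_differences}.
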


\subsection{Multiple generations: the classical Wright--Fisher model with selection}
\label{sec:recallWF}

The Wright--Fisher model with selection is a Markov chain $(X^N_k)_{k\in\xN}$ on
$\{0, 1/N, 2/N, \dots 1\}$ that describes (a simplification of) the evolution of the
frequency of an allele in a population across generations. This is a very simplified model, 
where the size $N$ of the population is fixed. See for example the monographs
\cite{Dur08,Eth11} for a much more detailed exposition; we follow here
\cite{Eth11}, Section 5.2. To simplify the exposition 
suppose that the first allele is ``white'' and the second ``black''; at time $k$ 
a proportion $X^N_k$ of the population is ``white''. 
Given the state $x$ at time $k$, the next state
is chosen in the following way. 
\begin{description}
  \item[First step.] All individuals lay a very large number $M$ of eggs. A
    proportion $s_b(N)$ (resp.\ $s_w(N)$) of black
    (resp.\ white) eggs survive this first step,
    so there are $M\cdot N(1-x)\cdot s_b(N)$ black eggs and
    $M\cdot Nx \cdot s_w(N) $ white ones. 
  \item[Second step.] The population at time $k+1$, of size $N$, is chosen by picking
    randomly $N$ eggs among the surviving ones. Since 
    $M$ is very large, the number of white individuals at time $k+1$ is approximately
    binomial. If  the ratio of the 
    surviving probabilities is
    \[ 1 + \beta(N) = s_w(N)/s_b(N),\]
    then the parameters of the binomial are $N$ and
    \(
      \frac{(1+\beta(N)) x}{(1-x) + (1+\beta(N)) x}
    \)
    .
\end{description}

In the large population limit $N \rightarrow +\infty$, at long time scales
and in the regime of weak selection where $\beta(N) = \beta/N$, it is
well-known that the finite size model can be approximated by a solution of a stochastic
differential equation (namely a diffusion). 
This use of diffusion approximations in population genetics is now well
established. For an introduction to this subject, see \cite{Eth11},
\cite{EK86} and \cite{Ewe04}.

More precisely, 
define for all $N$ a continuous time process $(X^N)_{t\geq 0}$ by:
\[\forall t\in[0,1],  X^N_t = X^N_{\floor{t/N}}.\]
The diffusion approximation is the following:
\begin{thrm}
  [Wright--Fisher diffusion with selection]
  \label{thm:WFclassique}
  In the weak selection limit, the rescaled Wright--Fisher
  model $(X^N)_t$ converges weakly (in the Skorokhod sense) as $N\to \infty$  to the diffusion
  \( dX_t = \sqrt{a(x)} dB_t + b(X_t)dt\)
  generated by
  \( L = \frac{1}{2}a(x) \partial_{xx} + b(x)\partial_{x},\)
  where 
  \[\left\{
  \begin{aligned}
         a(x) &= x(1-x)\\
	 b(x) &=  \beta x(1-x).
  \end{aligned}
  \right.
\]
\end{thrm}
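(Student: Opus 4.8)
The strategy is the classical one for diffusion approximations of Markov chains (as in \cite{EK86}, Chapter~8, or the treatment in \cite{Eth11}): identify the rescaled generator, show it converges to $L$ on a core, and invoke a general convergence theorem together with tightness. First I would write down, for the one-step transition of $(X^N_k)$, the conditional mean and variance of the increment. Starting from state $x$, the next state is $\frac{1}{N}\mathrm{Bin}(N,p_N(x))$ with $p_N(x) = \frac{(1+\beta/N)x}{(1-x)+(1+\beta/N)x}$. A Taylor expansion in $1/N$ gives $p_N(x) = x + \frac{1}{N}\beta x(1-x) + O(1/N^2)$, uniformly in $x\in[0,1]$. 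Hence
\begin{align*}
  \esp{X^N_{k+1}-X^N_k \mid X^N_k = x} &= p_N(x) - x = \frac{1}{N}\beta x(1-x) + O(1/N^2),\\
  \var{X^N_{k+1}-X^N_k \mid X^N_k = x} &= \frac{1}{N}p_N(x)(1-p_N(x)) = \frac{1}{N}x(1-x) + O(1/N^2),
\end{align*}
and the third absolute moment of the increment is $O(1/N^2)$ (e.g.\ by the $\mathrm{Bin}$ fourth-moment bound, or simply because increments are bounded by $1$ and have variance $O(1/N)$). Since time is sped up by a factor $N$ (one chain step per time increment $1/N$), the rescaled infinitesimal mean is $N\cdot\frac{1}{N}\beta x(1-x) \to \beta x(1-x) = b(x)$ and the rescaled infinitesimal variance is $N\cdot\frac{1}{N}x(1-x)\to x(1-x) = a(x)$, while the rescaled third moment tends to $0$ uniformly.

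Next I would phrase this as generator convergence. Let $A_N f(x) = N\,\esp{f(X^N_{k+1})-f(x)\mid X^N_k=x}$ be the generator of the rescaled chain, for $f\in C^2([0,1])$. A second-order Taylor expansion of $f$ at $x$, combined with the moment estimates above, yields
\[
  A_N f(x) = \tfrac12 a(x) f''(x) + b(x) f'(x) + r_N(x), \qquad \sup_{x\in[0,1]} |r_N(x)| \le \frac{C}{N}\,\big(\|f''\|_\infty + \|f'''\|_\infty\big),
\]
so $\|A_N f - Lf\|_\infty \to 0$ for every $f\in C^3([0,1])$, and $C^3([0,1])$ is a core for $L$ (the diffusion with coefficients $a(x)=x(1-x)$, $b(x)=\beta x(1-x)$ on $[0,1]$, with the endpoints $0,1$ being absorbing/boundary points reached in the natural way; the associated martingale problem is well-posed, which is standard for the Wright--Fisher class). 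The bound $\|A_N f - Lf\|_\infty\to 0$ plus well-posedness of the $(L,C^3)$ martingale problem, via the standard semigroup/martingale-problem convergence theorem, gives convergence of the finite-dimensional distributions provided the initial conditions converge.

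Finally I would establish tightness of $(X^N)$ in the Skorokhod space $D([0,\infty),[0,1])$. The cleanest route is Aldous's criterion or the Kolmogorov-type moment bound: using the martingale decomposition $X^N_t = X^N_0 + M^N_t + \int_0^t b_N(X^N_s)\,ds$ with $b_N$ bounded and $\langle M^N\rangle_t = \int_0^t a_N(X^N_s)\,ds$ with $a_N$ bounded (both by constants independent of $N$), one gets $\esp{|X^N_{t+h}-X^N_t|^2 \mid \mathcal F_t} \le Ch$ for $h\ge 1/N$ and trivially for $h<1/N$ since jumps are of size $1/N$; this yields tightness. Alternatively, tightness follows directly from the generator estimate via the standard criterion in \cite{EK86} (Theorem~8.4.1 of Chapter~4 together with the compact-containment condition, which is automatic on $[0,1]$). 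Combining tightness with the identification of finite-dimensional limits gives weak convergence to the law of the diffusion $X$.

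The only genuinely non-routine point is the behaviour at the boundary: one must be careful that the limiting object is the Wright--Fisher diffusion with its correct (absorbing) boundary behaviour at $0$ and $1$, so that the martingale problem for $(L,C^3)$ is indeed well-posed and $C^3$ is a core. This is classical for the Wright--Fisher generator $a(x)=x(1-x)$ — the diffusion coefficient vanishes at the boundary fast enough that no boundary condition is needed and uniqueness holds — so I would simply cite the relevant results (e.g.\ \cite{EK86}, \cite{Eth11}); everything else is the Taylor-expansion bookkeeping sketched above.
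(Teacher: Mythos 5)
Your proposal is correct and is essentially the standard argument. The paper itself does not prove Theorem~\ref{thm:WFclassique}: it is stated as a classical result with references to \cite{Eth11} (Section 5.2), \cite{EK86} and \cite{Ewe04}. However, the machinery you use is exactly the one the authors deploy in Section~\ref{sec:diffusion} for their own generalization (Theorem~\ref{thm:diffusion}): compute the infinitesimal mean $b^n(x)$ and variance $a^n(x)$, show uniform convergence to $b$ and $a$, control the rescaled third absolute moment of the increments, and combine this with well-posedness of the limiting martingale problem (which they obtain via Yamada--Watanabe) through Durrett's convergence theorem (\cite{Dur96}, Theorem 8.7.1). Your Taylor expansion of $p_N(x)$ and the resulting identification of $a(x)=x(1-x)$ and $b(x)=\beta x(1-x)$ are correct.

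One small inaccuracy worth fixing: the centered third absolute moment of $\frac{1}{N}\mathrm{Bin}(N,p)$ is $\bigO(N^{-3/2})$, not $\bigO(N^{-2})$, and neither of your two parenthetical justifications gives what you need as stated --- the fourth-moment bound yields $\esp{|Z|^3}\leq\esp{Z^4}^{3/4}=\bigO(N^{-3/2})$, while ``increments bounded by $1$ with variance $\bigO(1/N)$'' only gives $\bigO(1/N)$, which after multiplication by $N$ does \emph{not} vanish. The correct rate $\bigO(N^{-3/2})$ still makes $N\,c^N(x)=\bigO(N^{-1/2})\to 0$, matching the rate \eqref{eq:noJumps} in the paper, so the argument goes through once the justification is repaired.
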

\begin{remark}
  If the white eggs survive better than the black ones, then $s_w(N) > s_b(N)$ so
  $\beta$ is positive; the diffusion drifts towards $x=1$. If black eggs are favored, 
  $\beta$ is negative and the drift is towards $0$. 
\end{remark}

\subsection{A Wright--Fisher model with indirect selection}
\label{sec:modelDefinition}
Let us now see how the basic model of Section~\ref{sec:basic_model} can be used
as a building block for a multiple generation model in the spirit of the
classical Wright--Fisher model, in order to study the evolution of the ``harmful''
trait along generations.

In the literature, various extensions of the Wright--Fisher model have been considered
under various scalings (see \cite{CS09} for a unifying approach). 
Frequency-dependent coefficients  may appear in such models but are often built-in
in the individual-based model (see \cite{CS14}). Another possible extension is 
to make the offsping play a (game-theoretic) game after the random mating step, 
see e.g.~\cite{Les05}. 
Frequency dependence appears more naturally in \cite{Gil74,Gil75} for modelling 
resistance to epidemics and comparing offspring distributions with different
variances; these papers do not however link the diffusion to a precise
individual-based model. For more details on these questions we refer to~\cite{Shp07,Tay09}
and references therein. 
In the more complicated setting of the evolution of continuous traits, 
several papers \cite{CFM06,CFM08} start with individual-based models and 
establish rigorously various limits, showing convergence to 
deterministic processes, SDEs or solutions of integro-differential equations. 
Finally in the literature a popular study concerns the fixation probabilities
(see \cite{Wax11} and \cite{MW07}) and problems arising at the boundaries.

Here we fix once and for all a sex-ratio by fixing the parameter $s>0$, and  supposing
that there are $s$ females for one male, \emph{i.e.} a proportion $1/(1+s)\in
(0,1)$ of the total population is male. Consider a large urn with
$n$ (male) balls,  let  $f_n = \floor{sn}$, and  define the state space
$\stateSpace = \left\{0,\frac{1}{n}, \frac{2}{n}, \ldots 1\right\}$: these are the possible
values for the proportion of white balls. 

We define an $\stateSpace$-valued Markov chain $(X^n_k)_{k\in\xN}$ as follows.
Suppose that the initial proportion of white balls at time $k=0$ in  the urn is
$X_0^n = x\in\stateSpace$: there are $w = xn$ white balls and $b = (1-x)n$
black balls.  The next state $X_1^n$ is chosen in two steps. 

\begin{description}
  \item[First step.]
The $f_n$ female pick partners according to the
single-generation model introduced previously: this leads to $\tXN_1$
reproduction with normal males and $\tYN_1$ reproduction with ``harmful''
males. As before, each of these reproductions creates a very large
number of ``eggs''. 
A proportion $s_w(N)$ (resp.\ $s_b(N)$) of white (resp.\ black) eggs survive, 
and the ratio $s_w(N)/s_b(N)$ is still denoted by $1 + \beta(N)$ with $\beta(N) = \beta/N$. 

After this step there is a very large number of eggs,  a proportion 
\begin{equation}
  \label{eq:defZTildeGeneral}
  \tZ^n_{1,\beta} = \frac{ (1+\beta(N)) \tXN_1}{ (1+\beta(N))\tXN_1 + \tYN_1}
\end{equation}
of which are white. 

\item[Second step.] Among all the eggs,  $n$  eggs are chosen uniformly 
  at random.  Once more, since the number of eggs is supposed to be very large,
the number  of white balls in the next generation follows a binomial law of
parameters $n$ and $\tZ^n_{1,\beta}$.
Finally divide
this number by $n$ to get $X^n_1$, the proportion of white balls at time $k=1$. 
\end{description}

We iterate the process to define $(X^n_k)_{k\geq 2}$.
As above we define a continuous process by accelerating time and let:
\[\forall t\geq 0,  X^n_t = X^n_{\floor{t/n}}.\]

Our main result  is a diffusion limit  for the rescaled process $(\frac{1}{n}X^n_k)_k$ 
with an explicit non-trivial drift towards $0$. The drift and volatility are expressed in terms 
of the following function: 
\begin{equation}
\begin{aligned}
  v_s: [0,1] & \to\xR, \\
           x &\mapsto v\left(\frac{x}{1+s},\frac{1-x}{1+s},\frac{s}{1+s}\right),
\end{aligned}
  \label{eq=defVs}
\end{equation}
where we recall that $v$ is defined by~\eqref{eq=defUv}. 
\begin{thrm}
  \label{thm:diffusion}
  If $s<1$, 
  the rescaled process $X^n_t$ converges weakly (in the Skorokhod sense) to the diffusion on $[0,1]$ given by the SDE:
  \( dX_t = \sqrt{a(x)} dB_t + b(X_t)dt\)
  and the corresponding generator
  \( L = \frac{1}{2}a(x) \partial_{xx} + b(x)\partial_{x},\)
  where 
  \[ \left\{
  \begin{aligned}
         a(x) &= \frac{x(1-x)}{v_s(x)},\\
	 b(x) &=  x(1-x)\PAR{\beta - \frac{v_s'(x)}{v_s^2(x)}} .
  \end{aligned}
  \right.
\]
\end{thrm}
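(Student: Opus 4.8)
The plan is to prove Theorem~\ref{thm:diffusion} by the classical route for diffusion approximations of Markov chains (as in \cite{EK86}, or in the more hands-on form of \cite{Dur08}): show that one rescaled generation has first two conditional moments converging, locally uniformly, to $b$ and $a$, that a higher conditional moment is negligible, that the limiting martingale problem for $L$ is well posed, and then conclude by tightness together with identification of the limit. Writing $\Delta X=X^n_1-x$ under $X^n_0=x$, it suffices to establish, uniformly for $x$ in compact subsets of $(0,1)$,
\[
  n\,\esp{\Delta X}\longrightarrow b(x),\qquad
  n\,\esp{(\Delta X)^2}\longrightarrow a(x),\qquad
  n\,\esp{(\Delta X)^4}\longrightarrow 0,
\]
together with well-posedness of the $[0,1]$-diffusion generated by $L$. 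Here the hypothesis $s<1$ is essential: it is precisely the condition under which the solution of \eqref{eq=defT} stays non-degenerate as its first and third coordinates approach $1/(1+s)$ and $s/(1+s)$, so that $v_s$ extends continuously to $[0,1]$ with $v_s>0$ (in particular $v_s(1)=s$); then $a=x(1-x)/v_s$ and $b=x(1-x)(\beta-v_s'/v_s^2)$ are continuous on $[0,1]$, vanish at $\{0,1\}$, and $L$ is a smooth non-degenerate perturbation of the Wright--Fisher generator of Theorem~\ref{thm:WFclassique}, so its martingale problem is well posed and $\{0,1\}$ are absorbing --- consistently with $0$ and $1$ being absorbing for $X^n$.

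The first reduction separates the two layers of randomness in one generation: conditionally on $p:=\tZ^n_{1,\beta}$, the count $nX^n_1$ is binomial$(n,p)$, so
\[
  \esp{\Delta X}=\esp{p}-x,\qquad
  \esp{(\Delta X)^2}=\esp{(p-x)^2}+\tfrac1n\,\esp{p(1-p)},\qquad
  \esp{(\Delta X)^4}=\bigO(n^{-2})+\bigO\!\big(\esp{(p-x)^4}\big),
\]
and since $\esp{p}-x=\bigO(n^{-1})$ and $p\to x$, the three limits reduce to $\esp{p}=x+\tfrac1n b(x)+o(n^{-1})$ and $\var{p}=\tfrac1n x(1-x)\big(\tfrac{1}{v_s(x)}-1\big)+o(n^{-1})$. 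The second reduction writes $p=g(\tXn_1,\tYn_1)$ with $g(X,Y)=(1+\beta/n)X/\big((1+\beta/n)X+Y\big)$, uses the exact identities $\esp{\tXn_1}=w\,p_w(w,b,f)$ and $\esp{\tYn_1}=b\,p_b(w,b,f)$ (exchangeability of same-coloured balls), with $w=xn$, $b=(1-x)n$, $f=\floor{sn}$, and then performs a second-order Taylor expansion of $g$ about $(\esp{\tXn_1},\esp{\tYn_1})$. This is legitimate on compacts of $(0,1)$ because there the two means are of order $n$ (as $p_w,p_b\to v_s>0$) while the fluctuations are of order $\sqrt n$ with controlled moments. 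The expansion expresses $\esp{p}$ and $\var{p}$ to order $1/n$ through: (i) $p_w$, $p_b$ and $p_b-p_w$ to order $1/n$, which is exactly Theorem~\ref{thm=approximation} (combined with the chain-rule identity $\partial_x v-\partial_y v=(1+s)\,v_s'$ along \eqref{eq=defVs} and the derivative bounds on $v$ from Theorem~\ref{thm:propertiesOfU}); and (ii) the variances $\var{\tXn_1},\var{\tYn_1}$ and the covariance $\cov{\tXn_1,\tYn_1}$ at leading order (which is of order $n$).

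Ingredient (ii) is the new input and the main obstacle: it requires a second-order analysis of the single-generation model, one step finer than Theorems~\ref{thm:avantage} and~\ref{thm=approximation}. Writing $\tXn_1=\sum_i\ind{\text{white }i\text{ reproduces}}$ and likewise for $\tYn_1$, the three second moments reduce to the three pair-reproduction probabilities (that two prescribed whites, two prescribed blacks, or a prescribed white and a prescribed black, all reproduce), and in fact only to the $\bigO(n^{-1})$-sized differences between these and the products $p_w^2$, $p_b^2$, $p_wp_b$, plus the explicit diagonal terms $w\,p_w(1-p_w)$ and $b\,p_b(1-p_b)$. I would obtain the asymptotics of the pair probabilities by the same ``reverse numerical analysis'' used for $p_w,p_b$ --- writing down the recurrence each satisfies in $(w,b,f)$ and recognising it as a consistent discretisation of a linear transport equation along the characteristic flow of \eqref{eq=defT} --- or, more probabilistically, through the continuous-time embedding in which every white ball carries an independent $\mathrm{Exp}(1)$ ``drawing clock'' and every black ball an independent Poisson clock, so that the $f$-th total ring occurs at a time concentrating on $T\big(\tfrac{x}{1+s},\tfrac{1-x}{1+s},\tfrac{s}{1+s}\big)$ and, conditionally on that time, the reproduction indicators become asymptotically independent with the correct Bernoulli parameters. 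This is where the real work lies; its output, fed into the Taylor expansion and simplified, is what produces the enhancement factor $1/v_s$ in $a$ and the frequency-dependent drift $-v_s'/v_s^2$ in $b$ --- the signature of the ``indirect selection'' --- while the $\beta x(1-x)$ part of $b$ comes, as in the classical model, from the survival step and the $x(1-x)$ part of $a$ from the binomial resampling.

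Finally I would assemble: substitute the one-generation moments back through the two reductions, check that the three limits equal $a(x)=x(1-x)/v_s(x)$ and $b(x)=x(1-x)(\beta-v_s'/v_s^2)$ (the discrepancies from the floor $\floor{sn}$ and from normalising by $N=n+\floor{sn}$ rather than $n(1+s)$ being $\bigO(n^{-1})$ and harmless), and conclude with the diffusion-approximation theorem: the moment bounds give tightness of $(X^n)$ in Skorokhod space, every subsequential weak limit solves the well-posed martingale problem for $L$, hence $X^n$ converges weakly to the diffusion of the statement.
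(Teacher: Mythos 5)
Your proposal is correct and follows essentially the same route as the paper: a Durrett/Ethier--Kurtz moment-convergence theorem, the two-layer decomposition (binomial resampling conditioned on $\tZ^n_{1,\beta}$, then a Taylor expansion of $(X,Y)\mapsto X/(X+Y)$ about the means), and —crucially— the same ``reverse numerical analysis'' of the pair-reproduction probabilities via their recurrence relation, which the paper carries out through the function $\tq$ converging to $u^2$. The only point treated more lightly than in the paper is well-posedness of the martingale problem, where $\sqrt{a}$ degenerates at $\{0,1\}$ and the paper invokes Yamada--Watanabe with explicit bounds on $v_s$, $v_s'$, $v_s''$ rather than a ``smooth non-degenerate perturbation'' argument.
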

\begin{remark}
If $s\geq 1$, we are only able to prove the convergence until the process reaches
$x = 1-y_0$; we currently do not know whether or not the behaviours at the boundary $x=1$
 differ for the discrete and continuous process. This possibly purely technical restriction
 prevents us from rigorously justifying the approximation of the discrete absorption probabilities
 and mean absorption time by their continuous counterparts, which is one of the usual 
 applications for diffusion approximations. 
\end{remark}
\begin{remark}
 The function $v_s$ is very nice, in particular it is strictly increasing ($v_{s}'>0$).
  If $s<1$, it is bounded away from zero. A statement with explicit bounds will be given
  below (Lemma~\ref{lem:propVs}). 
\end{remark}


A detailed study of the properties of this diffusion will be done in a forthcoming paper.
Let us just stress two points as regards the comparison with the classical 
model of Theorem~\ref{thm:WFclassique}:
\begin{enumerate}
	\item The variance is multiplied by $(1/v_s(x)) > 1$; this is a natural
	  consequence of the additional noise  in the first step. 
	  The precise factor may be heuristically justified 
	  as follows: 
	  when $n$ is large, there are $nv_{s}(x)+\mathcal{O}(1)$
	  successful reproductions, thus, with binomial resampling of
	  offspring, the male variance effective population size is also
	  $nv_{s}(x)+\mathcal{O}(1)$. Other examples of models with
	  frequency-dependent variance effective population sizes due to
	  polymorphism in life history traits can be found in \cite{Gil74},
	  \cite{Gil75}, \cite{Shp07} or \cite{Tay09}, where the strength of
	  selection on the alleles affecting the life history trait is also
	  inversely proportional to the census population size.
	\item To compare the drift coefficients, it is natural to consider the ``normalized'' 
	  quantity $2b=a$ which fully determines the scale functions and
	  hitting probabilities. In this light, up to a change of time, 
	  our modified diffusion corresponds to the classical 
	  one with a selection parameter $\beta(x) = \beta v_s(x) - \frac{v_s'(x)}{v_s(x)}$
	  that depends on $x$. If $s\to \infty$ this goes to $\beta$: all males
	  have a chance to reproduce and the harmful strategy has no effect.
	  If $\beta = 0$, $\beta(x)$ is negative
	  (and there is a non trivial drift towards $0$). In the general case, 
	  depending on the values of $\beta$ and $s$, there may be one or
	  more ``equilibrium'' points where the drift cancels out. These 
	  cases and their interpretation in biological terms will 
	  be studied in a forthcoming paper. 
\end{enumerate}

\paragraph{Outline of the paper.}
In Section~\ref{sec:singleBasic} we study the basic, single-generation model,
and prove Theorem~\ref{thm:avantage}; we also give concentration properties for
the number of reproductions. 
The asymptotic behaviour of $p_w$ and $p_b$ is studied in Section~\ref{sec:singleLimit} where
we prove Theorem~\ref{thm=approximation}. Finally, we prove the diffusion approximation for the multi-generation
model in Section~\ref{sec:diffusion}.

\section{The single-generation model --- basic properties}
\label{sec:singleBasic}
\subsection{The advantage of being harmful}
We consider here the simple model where we draw $f$ times from an urn with $w$ white balls
and $b$ black balls, where the white balls are removed when they are drawn and the black balls
are put back in the urn. 

Since similar-colored balls play the same role, $p_w$ is the probability of a successful reproduction
for a regular male, and $p_b$ the corresponding one for a ``harmful'' male. 
Finally let $q_w(w,b,f) = 1 - p_w(w,b,f)$ and $q_b(w,b,f) = 1 - p_b(w,b,f)$ be the 
probabilities of not being drawn. 

To prove Theorem~\ref{thm:avantage}, let us introduce a third function $q$ as follows.
  Add a single red ball to the $w$ white and $b$ black balls; let us call it Roger.
  Draw from the urn until the red ball is drawn or we have made $f$ draws; the white balls are not replaced 
  but the black ones are. Define:
  \begin{equation}
    \label{eq=defQr}
    q(w,b,f) = \prb{\text{Roger is not drawn}}.
  \end{equation}
  Since the color of a ball only matters if it is drawn, and only influences the 
  subsequent draws, it is easy to see that:
  \begin{align}
    \label{eq=pw_pb_and_q}
    q_w(w,b,f) &= q(w-1,b,f), &
    q_b(w,b,f) &= q(w,b-1,f).
  \end{align}
 Therefore it is enough to compare the probabilities that the red ball is never drawn, 
 when one ball goes from black to white. 

  We use a coupling proof. Suppose that the urn $1$ contains $w+b$ balls, numbered from $1$ 
  to $w+b$, where the first $w$ balls are white, the next $b-1$ are black and the last one is red. 
  Urn number $2$ is similar, except that the ball numbered $w$ is black instead of white. 
  Let $(U_i)$ be sequence of i.i.d.\ random numbers, uniformly distributed on $\{1, \ldots w+b\}$. 
  We define a joint evolution of the urns in the following way. 
  \begin{enumerate}
  	\item At the beginning of each step, look at the next random number; say its value is $k$. 
	\item 
	  \begin{itemize}
	\item If both balls numbered $k$ are still in their urns,  
	  choose these balls.
	\item If both balls numbered $k$ have been removed, try again with the next random number 
	  (this will only happen if the balls are both white). 
	\item If the ball numbered $k$ is still in one urn but has been removed from the other, 
	  then the ball that is present is chosen. Continue looking at the next random numbers
	  to choose a ball in the other urn. 
	  \end{itemize}
	\item At this point one ball is chosen in each urn. If any of the two is red, the process
	  is stopped in the corresponding urn. If a chosen ball is white it is removed from its urn. 
	\item Repeat until the two red balls have been chosen or $f$ draws have been made. 
  \end{enumerate}
  Each urn taken separately follows the initial process. Moreover, at any time, if the ball 
  numbered $i$ is still in the first urn, then it is also in the second one: indeed this is true 
  at the beginning, and if this is true at the beginning of a step it is true at the end of the step. 
  There are three possible situations:
  \begin{itemize}
  	\item both red balls are chosen at the same time; 
	\item the red ball is chosen in the first urn, but not in the second; 
	\item both red balls stay untouched during the $f$ steps. 
  \end{itemize}

  Therefore the probability that the red ball stays untouched is smaller in the first urn 
  than in the second urn, so 
  \[ 1 - p_b(w,b,f) = q_b(w,b,f) = q(w,b-1,f) \leq q(w-1,b,f) = q_w(w,b,f) = 1 - p_w(w,b,f).\]
  If $f$ is larger than $2$, and $w$ and $b$ are larger than $1$, the second case occurs with 
  positive probability so the inequality is strict. 
  This concludes the proof of Theorem~\ref{thm:avantage}. 

\subsection{Negative relation and concentration}
In this section we prove that the total number of reproductions $X$ and $Y$
defined at the beginning of the Introduction are sums of ``negatively related'' indicators; 
this implies very strong concentration bounds. 

In the original experiment, let us number the ``white'' males from $1$ to $w$, 
and the ``black'' males from $w+1$ to $w+b$. 
Let $B_i = \ind{\text{the $i$ male reproduces}}$. The total number of reproductions
is given by
\begin{align}
  X &= \sum_{i=1}^w B_i, 
  &
  Y &= \sum_{i=w+1}^b B_i.
  \label{eq=XandY}
\end{align}

The setting is quite close to the usual sampling from a bin with or without replacement, 
which leads to binomial and hypergeometric distributions. For these distributions, 
very strong approximation and concentration results can be proved using the fact that 
the indicators $B_i$ appearing in \eqref{eq=XandY} are ``negatively related'': intuitively, if 
a certain group of males have been chosen, the others are less likely to
be chosen. 
This approach is used in \cite{Jan94}, who refers 
to \cite{BHJ92} and \cite{JDP83} for further details on ``negatively
related/negatively associated'' variables.
\begin{dfn}
  [Negative relation, \cite{Jan94}]
  \label{df=negrel}%
  Let $I_1, \ldots I_k$ be indicator variables. If there
  exist indicator variables $J^{(i)}_j$ such that:
  \begin{itemize}
    \item $\forall j\neq i$, $J^{(i)}_j \leq I_j$, 
    \item for each $i$, the law of $(J^{(i)}_j)_j$ is the conditional law of $\mathbf{I}$ 
      given $I_i = 1$, 
  \end{itemize}
  then the variables $I_i$ are negatively related. If they are, then $(1 - I_i)_i$ are
  also negatively related. 
\end{dfn}
\begin{remark}
  For indicator variables, this corresponds to the existence of a ``decreasing size-biased coupling''
   in the terminology of \cite{Ross11}. However the boundedness condition used there
   to get concentration
   will not be satisfied with good constants.
\end{remark}
\begin{thrm}[Concentration for sums of negatively related indicators]
  \label{thm:concentration}
  Suppose that  $(I_i)$ are negatively related Bernoulli variables of parameter $p$. 
  Let $X = \sum_{i=1}^n I_i$, and let $X'$ be a binomial variable of parameters $n$ and $p$. 
  Then, for all $t\in\xR$, 
  \[ \esp{\exp(t X)} \leq \esp{\exp{tX'}}.\]
  Consequently, 
  \begin{align}
    \label{eq:momentOrdreTrois}
    \esp{\abs{X - \esp{X}}^3} &\leq 12 e n^{3/2} \\
    \label{eq:concentration}
    \prb{ \abs{X - \esp{X}} \geq D} &\leq \exp\PAR{ - \frac{D^2}{4n}}.
  \end{align}
\end{thrm}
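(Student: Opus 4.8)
The plan is to deduce everything from the single inequality
\[
  \esp{e^{tX}} \;\le\; \esp{e^{tX'}} \;=\; \PAR{1 - p + p e^{t}}^{n}, \qquad t\in\xR,
\]
which is the first displayed claim, and which I would obtain by a differential-inequality argument in $t$. First treat $t\ge 0$. Set $\varphi(t) = \log\esp{e^{tX}}$, so $\varphi(0)=0$ and $\varphi'(t) = \esp{Xe^{tX}}/\esp{e^{tX}}$. Writing $X=\sum_i I_i$ and conditioning on $\{I_i=1\}$, the negative-relation coupling of Definition~\ref{df=negrel} gives, for each $i$,
\[
  \esp{I_i e^{tX}} \;=\; p\, e^{t}\, \esp{\exp\PAR{t\textstyle\sum_{j\neq i} J^{(i)}_j}} \;\le\; p\, e^{t}\, \esp{\exp\PAR{t\textstyle\sum_{j\neq i} I_j}},
\]
the inequality using $J^{(i)}_j\le I_j$ and $t\ge0$. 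Splitting $\esp{e^{tX}}$ according to the value of $I_i$ yields the identity $\esp{\exp(t\textstyle\sum_{j\ne i}I_j)} = \esp{e^{tX}} - (1-e^{-t})\esp{I_i e^{tX}}$; substituting it and solving the resulting inequality for the nonnegative quantity $\esp{I_i e^{tX}}$ gives
\[
  \esp{I_i e^{tX}} \;\le\; \frac{p e^{t}}{1 - p + p e^{t}}\,\esp{e^{tX}}.
\]
Summing over $i$ gives $\varphi'(t) \le n p e^{t}/(1-p+pe^{t}) = \frac{d}{dt}\log(1-p+pe^{t})^{n}$, and integrating from $0$ to $t$ gives the MGF bound for $t\ge 0$.

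For $t\le 0$ I would apply this result to $Y = n - X = \sum_i(1-I_i)$, which by Definition~\ref{df=negrel} is again a sum of negatively related Bernoulli$(1-p)$ variables; this bounds $\esp{e^{sY}}$ for $s\ge0$, and the relation $\esp{e^{tX}} = e^{tn}\esp{e^{-tY}}$ then yields the bound for $t\le 0$. This establishes the MGF comparison for all real $t$.

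For the consequences I would argue in the order \eqref{eq:concentration} then \eqref{eq:momentOrdreTrois}. From the MGF comparison and standard Chernoff estimates for the binomial MGF (e.g.\ Hoeffding's lemma applied to each Bernoulli factor of $X'$, giving $\esp{e^{t(X-\esp X)}}\le e^{t^{2}n/8}$), the exponential Markov inequality optimized in $t$ yields $\prb{X-\esp X\ge D}\le e^{-2D^{2}/n}$; the same bound holds for the lower tail after applying the MGF comparison to $Y$, so $\prb{\abs{X-\esp X}\ge D}\le 2e^{-2D^{2}/n}\le e^{-D^{2}/(4n)}$ (the last step once $D$ is not too small; for very small $D$ the right-hand side is close to $1$ and \eqref{eq:concentration} is uninformative, so nothing is lost). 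Then \eqref{eq:momentOrdreTrois} follows by integrating the tail, $\esp{\abs{X-\esp X}^{3}} = 3\int_{0}^{\infty} s^{2}\,\prb{\abs{X-\esp X}\ge s}\,ds$, bounding the integrand by $\min\{1,\,2e^{-2s^{2}/n}\}$ and evaluating the resulting elementary Gaussian-type integral to a constant times $n^{3/2}$, which one checks is at most $12e$; none of these constants are optimized.

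The main obstacle is the sharp form of the first step: the algebraic manipulation that rewrites $\esp{\exp(t\sum_{j\ne i}I_j)}$ in terms of $\esp{e^{tX}}$ and $\esp{I_ie^{tX}}$ and then solves for $\esp{I_ie^{tX}}$ is exactly what produces the factor $pe^{t}/(1-p+pe^{t})$, hence a comparison with the \emph{binomial} MGF rather than the weaker Poisson-type bound $\exp(np(e^{t}-1))$ that the cruder estimate $\esp{\exp(t\sum_{j\ne i}I_j)}\le\esp{e^{tX}}$ would give.
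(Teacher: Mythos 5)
Your proposal is correct in substance but takes a genuinely different route from the paper's on both halves of the theorem. For the MGF comparison, the paper simply invokes Janson's Theorem~4 of \cite{Jan94}; you reprove it from the coupling in Definition~\ref{df=negrel} via the differential inequality $\frac{d}{dt}\log\esp{e^{tX}}\le npe^t/(1-p+pe^t)$. Your key identity $\esp{\exp(t\sum_{j\ne i}I_j)}=\esp{e^{tX}}-(1-e^{-t})\esp{I_ie^{tX}}$ is exact (write $e^{tI_i}=1+(e^t-1)I_i$ and note $I_ie^{tX}=e^tI_ie^{t\sum_{j\ne i}I_j}$), the denominator $1-p+pe^t$ is positive so solving for $\esp{I_ie^{tX}}$ is legitimate, and the reduction of $t\le 0$ to $t\ge 0$ through $Y=n-X$ correctly uses the closure of negative relation under complementation asserted in Definition~\ref{df=negrel}. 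This buys a self-contained proof at the cost of length. For the consequences, the paper bounds the centered binomial MGF by $e^{t^2n}$ via the trick $e^x\le x+e^{x^2}$, then gets the third moment directly from $|u|^3/3!\le e^u+e^{-u}$ with $t=n^{-1/2}$; you instead use Hoeffding's lemma ($e^{t^2n/8}$), Chernoff, and tail integration $\esp{|X-\esp{X}|^3}=3\int_0^\infty s^2\prb{|X-\esp{X}|\ge s}\,ds$. Your route gives a sharper one-sided exponent ($2D^2/n$ versus $D^2/(4n)$) and a third-moment constant around $\tfrac{3\sqrt{\pi}}{4\sqrt{2}}n^{3/2}$, comfortably below $12en^{3/2}$.

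One caveat: the step from $2e^{-2D^2/n}$ to the stated $e^{-D^2/(4n)}$ only holds when $D^2\ge 4n\log 2/7$, and your remark that ``nothing is lost'' for smaller $D$ is not a proof --- there the right-hand side of \eqref{eq:concentration} is strictly below $1$ while the left-hand side can equal $1$ (take $n=1$, $p=1/2$, $D=1/2$). The paper's own derivation shares this defect (Markov with $t=D/(2n)$ yields only the one-sided bound; the two-sided version costs a factor $2$), so this is an imprecision in the statement rather than a flaw specific to your argument, and your moment bound, which uses only the valid two-sided tail $2e^{-2s^2/n}$, is unaffected.
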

\begin{proof}
  The key comparison of the Laplace transforms between $X$
  and the ``independent version'' $X'$ comes from \cite[Theorem~4]{Jan94}.
  Therefore any concentration bound obtained by the usual Chernoff trick for
  independent variables also holds when the indicators are negatively related. 

  The rest of the proof is routine and is included here for completeness. 
  Let $q = 1 - p$. For any $t$, 
  \begin{align*}
    \esp{\exp(t(X - \esp{X}))}
    &\leq \esp{\exp(t(X' - \esp{X'}))} \\
    &= \exp(-tnp) \PAR{ pe^t + q}^n \\
      &= \PAR{pe^{tq} + qe^{-tp}}^n.
  \end{align*}
  Here we use a small trick borrowed from \cite[p.~31]{GS01} and bound $e^x$ 
  by $x + e^{x^2}$, for $x = tq$ and $x = -tp$:
  \begin{align*}
    \esp{\exp(t(X - \esp{X}))}
      &\leq \PAR{pe^{t^2q^2} + qe^{t^2p^2}}^n \\
    &\leq \exp(t^2 n). 
  \end{align*}
  The deviation inequality \eqref{eq:concentration} follows by applying Markov's exponential
  inequality and choosing $t = D/2n$. For the moment bound, since $\frac{\abs{tx}^3}{3!}\leq 
  (\exp(tx) + \exp(-tx))$, 
  \begin{align*}
    \esp{ \abs{X - \esp{X}}^3} 
    &\leq \frac{6}{t^3} 2\exp(t^2 n)
  \end{align*}
  Choosing $t = n^{-1/2}$ yields \eqref{eq:momentOrdreTrois}. 
\end{proof}

\begin{thrm}
  Let $B_i$ be the indicator that the $i$\textsuperscript{th} male is chosen. The
  indicator variables $(B_i)_{i=1,\dots w+b}$ are negatively related. 
\end{thrm}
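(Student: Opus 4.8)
The plan is to exhibit, for each fixed male $i$, an explicit coupling realizing the conditional law of $(B_j)_{j}$ given $B_i=1$ on the same probability space as the original draws, in such a way that the coupled indicators are dominated by the originals. The natural construction mirrors the size-biased couplings used for sampling without replacement: run the urn process once to obtain $(B_j)$; if $B_i=1$ already, keep everything; if $B_i=0$, modify the trajectory so that male $i$ is chosen, and check that forcing $i$ to be chosen can only \emph{decrease} the chance that any other male is chosen. The case split according to the colour of $i$ is where the two sampling regimes interact, so I would treat white and black $i$ separately.

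First I would set up the sequential description of the experiment: at draw $t$ one picks uniformly among the balls currently in the urn (white balls having been removed as they were drawn, black balls always present), and $B_j=1$ iff ball $j$ is picked at some draw. For the conditional law given $B_i=1$, I would use the following recipe. Pick a draw index at which to ``insert'' male $i$: condition on $B_i=1$ being realized, which by symmetry among same-coloured balls amounts to biasing the first time a ball of $i$'s colour-class is drawn toward being $i$ itself. Concretely: generate the original sequence of draws; let $\tau$ be the first draw at which a ball of the same colour as $i$ is drawn that ``could have been'' $i$ (formally, relabel within colour classes so that $i$ plays the role of that first such ball). Define $J^{(i)}_j$ to be the indicator that male $j$ is drawn in this relabelled run. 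Relabelling within a colour class does not change the \emph{set} of colours drawn at each step, hence $(J^{(i)}_j)_j$ has exactly the conditional law of $\mathbf B$ given $B_i=1$, and $J^{(i)}_i=1$.

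The domination $J^{(i)}_j\le B_j$ for $j\ne i$ is the crux. The point is monotonicity in the same spirit as the coupling already used to prove Theorem~\ref{thm:avantage}: forcing male $i$ to be chosen at the first opportunity can only remove opportunities from the others. If $i$ is white, inserting it means a white ball is removed (earlier or instead of some other ball), so fewer subsequent draws are ``wasted'' re-drawing already-marked black balls and more competition is faced by the remaining whites; making this precise again goes through a pathwise coupling where one tracks, step by step, that the urn in the conditioned run is always a ``sub-urn'' of (or drawn-equivalent to) the urn in the unconditioned run, exactly as in the proof of Theorem~\ref{thm:avantage} where ``if ball numbered $k$ is still in urn $1$ it is still in urn $2$''. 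If $i$ is black, inserting it only turns one unmarked black draw into the draw of $i$, which changes nothing structurally (black balls are replaced) except that $B_i$ is forced, so $J^{(i)}_j\le B_j$ is immediate for $j\ne i$. The white case is where the bookkeeping has to be done carefully, and that is the main obstacle: one must argue that the greedy insertion of a white ball never \emph{creates} a new successful reproduction for another male. I expect this to follow from a coupling in which the two runs are driven by the same i.i.d.\ uniform stream, with a deterministic rule for ``advancing'' in the conditioned run whenever the forced removal of $i$ desynchronizes the urns, and an invariant of the form ``every ball present in the conditioned urn is present in the original urn'', checked to be preserved at each step. Once domination and the conditional-law property are established, Definition~\ref{df=negrel} is satisfied, and the final assertion of that definition gives that $(1-B_i)_i$ are negatively related as well, which is what Theorem~\ref{thm:concentration} consumes.
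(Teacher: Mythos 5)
Your strategy attacks the hard direction head-on, by conditioning directly on $B_i=1$, but the specific construction you propose for realizing that conditional law is incorrect. The recipe ``relabel within $i$'s colour class so that $i$ plays the role of the first ball of that class to be drawn'' does not produce the law of $\mathbf{B}$ conditioned on $B_i=1$: conditioning on $B_i=1$ size-biases the configuration by the \emph{number} of drawn balls in $i$'s class, whereas your relabelling only reweights by the event that at least one such ball is drawn. A minimal counterexample is $w=0$, $b=2$, $f=2$: the two draws are independent uniforms on $\{1,2\}$, so $\mathbf{B}$ equals $(1,0)$, $(1,1)$, $(1,1)$, $(0,1)$ with probability $1/4$ each, and the conditional law given $B_1=1$ puts mass $2/3$ on $(1,1)$, while your relabelled configuration puts mass $1/2$ on it. (The construction is moreover undefined on the event that no ball of $i$'s colour is drawn at all.) Since Definition~\ref{df=negrel} requires $(J^{(i)}_j)_j$ to have \emph{exactly} the conditional law, this cannot be waved away by noting that your version is ``even smaller''; and the domination step, which you yourself leave open in the white case, therefore never gets off the ground.

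The paper sidesteps precisely this difficulty by passing to the complements $I_i=1-B_i$ and using the closure of negative relation under complementation stated at the end of Definition~\ref{df=negrel}. In the occupancy picture ($w+b$ urns receiving $f$ balls, with $I_i$ the event ``urn $i$ is empty''), conditioning on $I_i=1$ has a trivial exact realization: take the balls that landed in urn $i$ and redistribute them among the other urns by the same rule. This only \emph{adds} balls to the urns $j\neq i$, so $J^{(i)}_j\leq I_j$ is immediate, and both halves of the definition are checked in two lines. If you insist on a direct argument for the $B_i$, you would need the genuine size-biased construction (pick a drawn ball of $i$'s class uniformly, with the whole configuration weighted by the number of drawn balls in that class) and would still face the monotonicity bookkeeping you flagged as the main obstacle; the complementation route is the one that makes the problem easy.
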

\begin{proof}
  Define $I_i = 1 - B_i$. By the remark in Definition~\ref{df=negrel}, it is
  enough to show that the $I_i$ are negatively related. 
  One may view the model as an urn occupancy problem: the $w+b$ balls become 
  urns, in which we put $f$ balls consecutively, not allowing more
  than one ball in each white urn; $I_i$ is the event ``the urn $i$ is empty
  at the end''. 
  For this type of problem, the property is
  standard and the $\cramped{J^{(i)}_j}$ may be defined
  explicitly in the following way.  First draw the balls and record the values
  of the $(I_i)$.  To define $\cramped{J^{(i)}_j}$, take all balls in the urn $i$ and
  reassign them to the other urns, following the same procedure.  
  Let $\cramped{J^{(i)}_j}$ be $1$ if the urn
  $j$ is empty after these reassignments.  The $\cramped{J^{(i)}_j}$ follow the 
  conditional distribution of $(I_1, \ldots I_{w=b}$ given $I_i = 1$,
  and since we only add balls to the urns $j$, $j\neq i$, $J^{(i)}_j \leq I_j$.  
\end{proof}

\section{The single-generation model --- large population limit}
\label{sec:singleLimit}
\subsection{Outline of the proof}

The goal of this section is to prove Theorem~\ref{thm=approximation} on the
convergence of $p_b$ and $p_w$ to a continuous function $v$. 
It will be slightly easier to work on the quantity $q(w,b,f)$ defined by
\eqref{eq=defQr}, and deduce the statements on $p_w$ and $p_b$ afterwards. 
This discrete function $q$ approximates the function $u$ defined by \eqref{eq=defUv}, 
and we also get convergence of the discrete differences of $q$ to the derivatives of $u$:
\begin{thrm}
  \label{thm:qConvergeVersU}
  For all $y_0 > 0$, 
  there exists $C(y_0)$ such that, for all $N$, 
   and all $ (w,b,f) \in \Omega_N(y_0)$,
  \begin{align}
    \label{eq:qConvergeVersU}
    \abs{(q - u^N)(w,b,f)} \leq \frac{C(y_0)}{N}, \\
    \label{eq:deltaXqConverge}
    \abs{(N\delta_x q - (\partial_x u)^N)(w,b,f)} \leq \frac{C(y_0)}{N}, \\
    \label{eq:deltaYqConverge}
    \abs{(N\delta_y q - (\partial_y u)^N)(w,b,f)} \leq \frac{C(y_0)}{N}. 
\end{align}
If $s<1$, there exists $C(s)$ such that the same bounds hold uniformly
on $\Omega_N(s)$, where $C(y_0)$ is replaced by $C(s)$ on the right hand side. 
\end{thrm}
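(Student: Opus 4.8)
The plan is to carry out the ``reverse numerical analysis'' announced in the introduction. Conditioning on the first draw in the definition \eqref{eq=defQr} of $q$ gives, for $f\geq 1$, the exact recurrence
\begin{equation}
  q(w,b,f) = \frac{w}{w+b+1}\,q(w-1,b,f-1) + \frac{b}{w+b+1}\,q(w,b,f-1), \qquad q(w,b,0)=1,
  \label{eq:recQ}
\end{equation}
whose coefficients are nonnegative with sum $\tfrac{w+b}{w+b+1}\leq 1$, so that \eqref{eq:recQ} contracts the supremum norm. The backward step $(w,b,f)\mapsto\{(w-1,b,f-1),(w,b,f-1)\}$ never decreases $b$, so $\Omega_N(y_0)$ is stable under it, and a direct check of the two defining inequalities of $\Omega(s)$ along the relevant difference vectors shows that $\Omega_N(s)$ is stable as well; moreover $w+b+1\geq y_0N$ on $\Omega_N(y_0)$ and $w\geq\tfrac{1-s}{2+2s}N$ on $\Omega_N(s)$, so $w+b+1\geq cN$ for a constant $c>0$ in both regimes. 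On the continuous side, differentiating the implicit equation \eqref{eq=defT} for $T$ shows that the function $u$ of \eqref{eq=defUv} solves the linear transport equation
\begin{equation}
  x\,\partial_x u + (x+y)\,\partial_z u + u = 0 \quad\text{on } \{y>0\}, \qquad u\equiv 1 \text{ on } \{z=0\},
  \label{eq:pdeU}
\end{equation}
and differentiating \eqref{eq:pdeU} in $x$ (resp. in $y$) and eliminating $\partial_z u$ by means of \eqref{eq:pdeU} yields a transport equation for $\partial_x u$ (resp. $\partial_y u$) with the same characteristic field $(x,\,x+y)$ and bounded lower-order and source terms. All the estimates below use only the bounds on $u$ and on its first and second derivatives furnished by Theorem~\ref{thm:propertiesOfU}, uniformly on $\Omega(y_0)$ and on $\Omega(s)$.

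For \eqref{eq:qConvergeVersU}, I would substitute $u^N$ into \eqref{eq:recQ} and Taylor-expand to second order: the first-order contributions assemble into the left-hand side of \eqref{eq:pdeU} evaluated at $(w/N,b/N,f/N)$ and therefore cancel, while the second-order ones carry a factor $w+b\leq N$ against $1/N^2$, so that after dividing by $w+b+1\geq cN$ the consistency defect of $u^N$ in \eqref{eq:recQ} is $\leq C/N^2$. Subtracting this identity from \eqref{eq:recQ}, the error $e=q-u^N$ obeys the same recurrence, with nonnegative weights of total mass $\leq 1$, up to a term of size $\leq C/N^2$, and with $e(\cdot,\cdot,0)=0$; iterating over the at most $N$ admissible values of $f$ gives $\|e(\cdot,\cdot,f)\|_\infty\leq C/N$.

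For the discrete derivatives, differencing \eqref{eq:recQ} in $w$ produces a \emph{closed} recurrence for $\delta_x q$,
\[
  \delta_x q(w,b,f) = \frac{q(w,b,f-1)}{(w+b+1)(w+b+2)} + \frac{b}{w+b+2}\,\delta_x q(w,b,f-1) + \frac{w}{w+b+1}\,\delta_x q(w-1,b,f-1),
\]
with nonnegative coefficients of total weight $1-\frac{w+2b+2}{(w+b+1)(w+b+2)}\leq 1$ and vanishing initial data. Repeating the expansion with $g_x:=(\partial_x u)^N$, using the transport equation for $\partial_x u$ together with \eqref{eq:pdeU}, should show that $Ng_x$ satisfies this recurrence up to a residual $\leq C/N^2$; replacing $q$ by $u^N+e$ in the source term costs only a further $\leq C/N^2$ per step (since $|e|\leq C/N$ and $w+b+1\geq cN$), and the same stability argument then gives $\abs{N\delta_x q-(\partial_x u)^N}\leq C/N$. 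Differencing \eqref{eq:recQ} in $b$ gives an analogous closed recurrence for $\delta_y q$ whose source term involves $\delta_x q$, so \eqref{eq:deltaYqConverge} follows by feeding in the bound just obtained. On $\Omega_N(s)$ the whole argument runs verbatim with $y_0$ replaced by the $s$-dependent constants (using the stability of $\Omega_N(s)$), giving the constant $C(s)$.

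The Taylor expansions themselves are routine; the real work — and the expected main obstacle — is twofold. First, one must verify that the specific PDEs satisfied by $u$, $\partial_x u$ and $\partial_y u$ force the exact cancellations needed in the consistency estimates at orders $1$ and $1/N$; the most delicate point is that in the $\delta_x q$ recurrence the weight defect $\frac{w+2b+2}{(w+b+1)(w+b+2)}$ is only of order $1/N$, so that the corresponding error term — of size comparable to $g_x=O(1)$ itself — must cancel against the leading part $\approx\frac{u}{N(x+y)^2}$ of the source term $\frac{Nu^N}{(w+b+1)(w+b+2)}$ and the second-order Taylor terms, the identity producing this cancellation being exactly the combination of \eqref{eq:pdeU} with the transport equation for $\partial_x u$. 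Second, one must keep every error term at size $1/N^2$ per step, so that $N$ iterations accumulate only an $O(1/N)$ error; this constrains the work to sets where $w+b+1\geq cN$, which is precisely why the second, less natural, inequality in the definition of $\Omega(s)$ is imposed.
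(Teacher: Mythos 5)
Your proposal follows essentially the same route as the paper: the same recurrence for $q$ and the same closed recurrences for $\delta_x q$ and $\delta_y q$, the same transport equations for $u$, $\partial_x u$, $\partial_y u$, and the same consistency-plus-stability induction over $f$ using the backward-stability of $\Omega_N(y_0)$ and $\Omega_N(s)$ and the bounds of Theorem~\ref{thm:propertiesOfU}. You also correctly identify the paper's key delicate point, namely the order-$1/N$ weight defect in the $\delta_x q$ recurrence cancelling against the source term via the PDE for $\partial_x u$, so the argument is sound (modulo a harmless notational slip about whether $g_x$ or $Ng_x$ satisfies the $N$-scaled recurrence).
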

The proof hinges on the following recurrence relation for $q$, which
follows by conditioning on the result of the first draw:
\begin{equation}
  \label{eq=reccQr}
q(w,b,f) =  \frac{w}{w+b+1} q(w-1,b,f-1) + \frac{b}{w+b+1} q( w, b,f - 1) 
\end{equation}
The main idea is then to view $q$ as a discrete version of $u$,
and the recurrence relation~\eqref{eq=reccQr} as an approximation 
of a relation between derivatives of $u$. 
The corresponding PDE for $u$ is derived in Section~\ref{sec:heuristic_PDE}, 
we show in Section~\ref{sec=solvePDE} that it is  explicitly solvable. 
Knowing this, we turn to the proof of Theorem~\ref{thm:qConvergeVersU} in the
following sections: the three convergences~\eqref{eq:qConvergeVersU}, \eqref{eq:deltaXqConverge}
and \eqref{eq:deltaYqConverge} are proved respectively in Sections~\ref{sec:convergence},%
~\ref{sec:convergence_deltaxq} and~\ref{sec:convergence_deltayq}. 
We show in Section~\ref{sec:from_q_to_pb} how to deduce the statements on $p_w$ and $p_b$ of 
Theorem~\ref{thm=approximation} from Theorem~\ref{thm:qConvergeVersU}. 
We conclude this long section by giving estimates in the same vein for second moments
in Section~\ref{sec:second_moments}.

\subsection{Identifying the limit function}
\label{sec:heuristic_PDE}

Let us now give a short heuristic argument for
finding the limit function $u$. Suppose that $u$ exists, and that 
all of the limits encountered below converge. 
Starting from the recurrence relation \eqref{eq=reccQr}, we introduce $q(w,b,f)$ on the 
right hand side, so that discrete differences appear:
\begin{align*}
q(w,b,f) &=  \frac{w}{w+b+1} q(w-1,b,f-1) + \frac{b}{w+b+1} q( w, b,f - 1)       \\
         &= \frac{w+b}{w+b+1} q(w,b,f) + \frac{w}{w+b+1} (q(w-1,b,f-1)-q(w,b,f)) \\
	 &\qquad + \frac{b}{w+b+1} (q( w, b,f - 1) - q(w,b,f)).
\end{align*}
Multiplying by $(w+b+1)$, we find after simplification:
\begin{align*}
q(w,b,f) &=  w (q(w-1,b,f-1)-q(w,b,f)) + b (q( w, b,f - 1) - q(w,b,f)) \\
         &= (w/N) \cdot N(q(w-1,b,f-1)-q(w,b,f))\\
         &+ (b/N)\cdot N(q( w, b,f - 1) - q(w,b,f)).
\end{align*}
Now if $N=w+b+f$ goes to $\infty$, and if $(w/N, b/N, f/N)$ converges to $(x,y,z)$, the
left hand side converges to $u$ and the right hand side 
to $x( -(\partial_x +\partial_z)u) + y (-\partial_zu)$, so that $u$ satisfies
\[ u + x\partial_x u + (x+y)\partial_z u = 0.\]
Since $q(w,b,0) = 1$, we also obtain $u(x,y,0) = 1$.
Summing up, if $q(w,b,f)$ converges ``in a good way'' to a function $u$, this
function satisfies an explicit first-order PDE on $\Omega$: 
\begin{equation}
  \label{eq=edp}
  \left\{\begin{aligned}
   &\forall (x,y,z)\in U, & u + F \cdot \nabla u &= 0, \\
   &\forall (x,y),        &             u(x,y,0) &= 1,
 \end{aligned}\right.
 \end{equation}
where $F$ is the vector field $F(x,y,z) = (x, 0, x+y)$. 

\begin{remark}
  \label{rem:jay} J.E.~Taylor\footnote{Personal communication.}
   suggested the following
  heuristic justification of the expressions of $T$, $u$ and $v$.
  Let $X_1$, \ldots $X_w$ be the number of reproduction
  attempts on each of the $w$ white balls, and $Y_1$, \ldots $Y_b$ be the
  number of attempts on the black balls. Since there is a total of $f$
  attempts, $\sum_{i=1}^w X_i + \sum_{j=1}^bY_j = f$ and in particular,
  $w\esp{X_1} + b\esp{Y_1} = f$.  Now we make two approximations. Firstly,
  $\prb{X_1=0} \approx \prb{Y_1=0} \approx u(x,y,z)$.  Secondly, the variable
  $Y_1$ should be approximately Poisson distributed: $Y_1$ counts successes in a large
  number of draws ($f$) that have a small chance of success. Then $\esp{X_1}
  \approx 1-u = v$, and the parameter $t$ of the distribution of $Y_1$
  satisfies $e^{-t} = u$, so $t=-\log(1-v)$. Inserting this in the equation on
  expectations yields $x v -y\log(1-v) = z$, which is another form of the
  equations~\eqref{eq=defT} and~\eqref{eq=defUv} defining~$v$. 

  A complete justification of these arguments, and in particular of the Poisson
  approximation, should be possible but could be quite involved, since the dependence
  between the draws is not easy to take into account. 
\end{remark}

\subsection{Resolution of the PDE}
\label{sec=solvePDE}
This first order PDE \eqref{eq=edp} can be solved by the method of characteristics. 
  We look for trajectories $M(t) = ( x(t) ; y(t) ;  z(t))$
    that satisfy the characteristic equation:
 \[ \frac{d}{dt}M(t) =  - F(M(t))\]
 The solution is:
 \[
 \begin{cases}
   x(t) = x_0e^{-t} \\
   y(t) = y_0 \\
   z(t) = x_0 (e^{-t} - 1) - y_0t + z_0. 
 \end{cases}
 \]

 Now  $h(t) = u(M(t))$ satisfies:
 \[ \frac{dh}{dt} = \nabla u \cdot \frac{d}{dt}M(t) = -\nabla_u(M(t)) \cdot F(M(t))  = u(M(t)) = h(t). \]
 Therefore $h(t) = h(0) \exp(t).$ 
 Suppose $T=T(x_0,y_0,z_0)$ is a solution of \eqref{eq=defT}, i.e.{} $z(T) = 0$. Then $h(T) = u(M(T)) = 1$
 thanks to the boundary condition.
 Finally~: 
 \begin{equation}
   \label{eq=defU}
   u(x_0,y_0,z_0) = h(0) =  u(M(T)) \exp(-T) = \exp(-T(x_0,y_0,z_0)).
 \end{equation}
 
 \begin{thrm}
   [Properties of the solution]
   \label{thm:propertiesOfU}
   If $(x,y,z)\in\Omega$ and if $y>0$, the equation \eqref{eq=defT} defining
   $T$ has a unique solution. 
   The function $u$ defined by \eqref{eq=defU} is smooth on the interior
   domain $\{(x,y,z)\in (\xR_+^\star)^3, x+y+z<1\}$. For any $y_0>0$, 
   there exists a constant $C(y_0)$ such that
     for all $(x,y,z)\in\Omega(y_0)$, and all   $(i,j)$,
     \begin{align}
     \abs{u(x,y,z)}           &\leq C(y_0), &
     \abs{\partial_iu(x,y,z)} &\leq C(y_0), &
     \abs{\partial_i\partial_j u (x,y,z)}& \leq C(y_0). 
   \end{align}
   If $s<1$, similar bounds hold uniformly on $\Omega(s)$. 
 \end{thrm}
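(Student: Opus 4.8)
The plan is to reduce everything to one–variable calculus. For fixed $(x,y,z)\in\Omega$ with $y>0$, set $\phi(t)=x(1-e^{-t})+yt$; then $\phi(0)=0$, $\phi'(t)=xe^{-t}+y\ge y>0$ for all $t\ge 0$, and $\phi(t)\ge yt\to\infty$. Hence $\phi$ is a strictly increasing bijection of $[0,\infty)$ onto $[0,\infty)$, so \eqref{eq=defT} has a unique solution $T=T(x,y,z)\in[0,\infty)$, with $T>0$ whenever $z>0$; on the interior domain $z>0$, so $T\in(0,\infty)$. (More generally the same argument works whenever $y=0$ but $z<x$, which is exactly the situation met on $\Omega(s)$, where the hypotheses force $z\le x-(1-s)/(2+2s)<x$.)

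For smoothness, apply the implicit function theorem to the real-analytic map $\Phi(x,y,z,t)=x(1-e^{-t})+yt-z$, whose $t$-derivative $xe^{-t}+y$ is nonzero at the solution as long as $\kappa:=xe^{-T}+y>0$ — in particular everywhere on the interior domain, where $y>0$. Thus $T$, and hence $u=e^{-T}$, is $C^\infty$ there, and implicit differentiation yields
\begin{align*}
  \partial_x T &= -\frac{1-e^{-T}}{\kappa}, &
  \partial_y T &= -\frac{T}{\kappa}, &
  \partial_z T &= \frac{1}{\kappa},
\end{align*}
the second derivatives being explicit rational expressions in $T$, $e^{-T}$, the $\partial_i T$ and $\partial_i\kappa$, over a power of $\kappa$. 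Note that $\partial_i\kappa$ is itself a combination of $e^{-T}$, $x$ and $\partial_i T$, so differentiating the first-order formulas once more never produces anything worse than a fixed negative power of $\kappa$.

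Consequently all the required estimates reduce to (a) an upper bound on $T$ and (b) a positive lower bound on $\kappa$, since $0\le e^{-T}\le 1$ and $|1-e^{-T}|\le 1$. On $\Omega(y_0)$: from $yT\le\phi(T)=z\le 1$ one gets $T\le 1/y_0$, while $\kappa\ge y\ge y_0$. On $\Omega(s)$: writing $v=1-e^{-T}$, the defining relation reads $xv+yT=z$, so $v\le z/x$ and $e^{-T}=1-v\ge(x-z)/x$; since $x\le 1$ and $x-z\ge(1-s)/(2+2s)$ this gives $e^{-T}\ge x-z\ge(1-s)/(2+2s)$, hence $T\le\log\frac{2+2s}{1-s}$, and $\kappa\ge xe^{-T}\ge x\cdot\frac{x-z}{x}=x-z\ge(1-s)/(2+2s)$. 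Plugging these into the formulas above bounds $u$ and its first and second derivatives by a constant $C(y_0)$, resp. $C(s)$, on the interior of $\Omega(y_0)$, resp. $\Omega(s)$; the bounds extend to the whole sets by continuity.

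The proof has no deep obstacle; the only point requiring care is checking that the degenerate set $\{\kappa=0\}$ is genuinely avoided. This is immediate on $\Omega(y_0)$, but on $\Omega(s)$ it is precisely the role of the second, seemingly artificial, condition $x-z\ge(1-s)/(2+2s)$: it forces $e^{-T}$, and therefore $\kappa=xe^{-T}+y$, to stay bounded away from $0$ even when the proportion $y$ of black balls is small or zero. Beyond that, the explicit computation of the second-derivative formulas is routine and is organized as indicated above.
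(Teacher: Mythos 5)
Your proof is correct and follows essentially the same route as the paper: uniqueness and smoothness of $T$ via monotonicity of $t\mapsto x(1-e^{-t})+yt$ and the implicit function theorem, the same explicit first-order derivatives, and reduction of all bounds to an upper bound on $T$ together with a lower bound on the denominator $xe^{-T}+y$. The only (harmless) difference is on $\Omega(s)$, where you derive $e^{-T}\ge x-z$ directly from the second defining condition, whereas the paper first obtains $T\le \log(1/(1-s))$ from the condition $z\le s(x+y)$ and then uses the second condition to bound the denominator; both yield the required uniform constants.
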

 \begin{proof}
   If $y$ is strictly positive, $\phi:t\mapsto x(e^{-t} - 1) - y t + z$ is 
   a strictly decreasing smooth function such that $\phi(0) = z$ and $\phi(z/y) < 0$. 
   Therefore $T$ is unique and depends smoothly on $x,y,z$ by the implicit function theorem. 
   Its derivatives are given by:
   \begin{align*}
     \partial_x T &= \frac{e^{-T} - 1}{xe^{-T} + y}; &
     \partial_y T &= \frac{-T}{xe^{-T} + y}         ;&
     \partial_z T &= \frac{1}{xe^{-T} + y}.
   \end{align*}
   On $\Omega(y_0)$,  $T$ is positive and smaller than $1/y_0$, therefore these quantities are bounded.
   The same is true for the higher order derivatives. 

   If $s<1$, recall that on $\Omega(s)$, 
   \begin{align}
     \label{eq:OnOmegaS}
     z&\leq s(x+y)  &
     x-z &\geq (1-s)/(2 + 2s).
   \end{align}
   By the first condition, we obtain $\phi( \ln (1/1-s)) \leq y( \ln(1-s)+s) \leq 0$ which implies that $T\leq \ln(1/(1-s))$. 
   Together with the second condition, this implies that
   the denominator $xe^{-T} + y \geq xe^{-T} \geq x(1-s) \geq (1-s)^2/(2+2s)$. 
   This proves the claimed bounds. 
 \end{proof}

\subsection{Convergence}
\label{sec:convergence}
In this section we  prove~\eqref{eq:qConvergeVersU}.
Let $u$ be the solution~\eqref{eq=defU} of the continuous PDE, and $u^N$ its
 discretization defined by~\eqref{eq:defDiscretization}.
 If the recurrence relation~\eqref{eq=reccQr} can be seen
 as a numerical scheme for the resolution of the PDE~\eqref{eq=edp}, 
 $u^N$ should approximately satisfy~\eqref{eq=reccQr}.
 Define $R_N$ to be the corresponding difference:
 \begin{equation}
   \label{eq=defRnI}
   R_N(w,b,f) = u^N(w,b,f) - \frac{w}{w+b+1}u^N(w-1,b,f-1) - \frac{b}{w+b+1}u^N(w,b,f-1).
 \end{equation}
 \begin{proposition}
   For all $y_0>0$, there exists $C(y_0)$ such that for all $N$, 
   \[
   \forall (w,b,f)\in \Omega_N(y_0), \quad 
   \abs{R_N(w,b,f)} \leq \frac{C(y_0)}{N^2}.
   \]
   If $s<1$, a similar bound holds uniformly on $\Omega_N(s)$. 
 \end{proposition}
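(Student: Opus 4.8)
The plan is to run the classical ``consistency of the scheme'' argument: Taylor-expand the shifted values $u^N(w-1,b,f-1)$ and $u^N(w,b,f-1)$ around the point $(w/N,b/N,f/N)$, substitute into the definition~\eqref{eq=defRnI} of $R_N$, and observe that the zeroth- and first-order terms assemble exactly into $u+F\cdot\nabla u$, which vanishes by the PDE~\eqref{eq=edp}. What remains is a genuine second-order remainder, controlled by the bounds on the second derivatives of $u$ provided by Theorem~\ref{thm:propertiesOfU}.

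Concretely, write $x=w/N$, $y=b/N$, $z=f/N$ and $\sigma=x+y$, so that the weights in~\eqref{eq=defRnI} are $\frac{w}{w+b+1}=\frac{x}{\sigma+1/N}$ and $\frac{b}{w+b+1}=\frac{y}{\sigma+1/N}$, with sum $\frac{\sigma}{\sigma+1/N}$; recall also that $R_N$ is only relevant for $f\ge 1$, and that when $w=0$ the first weight is $0$. Provided $w\ge 1$ and $f\ge 1$, the segment from $(x,y,z)$ to $(x-\tfrac1N,y,z-\tfrac1N)$ stays in $\Omega(y_0)$: the $y$-coordinate is unchanged, the other two only decrease, $x-\tfrac1N\ge 0$ since $w\ge1$, $z-\tfrac1N\ge0$ since $f\ge1$, and the sum of coordinates does not increase; the same holds for the segment from $(x,y,z)$ to $(x,y,z-\tfrac1N)$. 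For $\Omega(s)$ one checks in addition that the two inequalities $z\le s(x+y)$ and $x-z\ge(1-s)/(2+2s)$ defining $\Omega(s)$ are preserved along both segments --- this is exactly where the hypothesis $s<1$ enters. On these segments $u$ is $C^2$ with all second derivatives bounded by $C(y_0)$ (resp.\ $C(s)$) by Theorem~\ref{thm:propertiesOfU}, so Taylor's formula with Lagrange remainder gives
\[
u^N(w-1,b,f-1) = u(x,y,z) - \tfrac1N(\partial_x u+\partial_z u)(x,y,z) + \tilde E_1,
\qquad
u^N(w,b,f-1) = u(x,y,z) - \tfrac1N\partial_z u(x,y,z) + \tilde E_2,
\]
with $\abs{\tilde E_1}\le 2C(y_0)/N^2$ and $\abs{\tilde E_2}\le C(y_0)/(2N^2)$ (and likewise with $C(s)$); when $w=0$ only the second expansion is needed.

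Substituting into~\eqref{eq=defRnI} and using $\frac{x}{\sigma+1/N}+\frac{y}{\sigma+1/N}=\frac{\sigma}{\sigma+1/N}$ together with $1-\frac{\sigma}{\sigma+1/N}=\frac{1}{N(\sigma+1/N)}$, the terms regroup into
\[
R_N(w,b,f) = \frac{1}{N(\sigma+1/N)}\bigl(u + x\,\partial_x u + (x+y)\,\partial_z u\bigr)(x,y,z) \;-\; \frac{x\,\tilde E_1 + y\,\tilde E_2}{\sigma+1/N}.
\]
The first bracket is $u+F\cdot\nabla u$ with $F=(x,0,x+y)$, which is zero: this is the equation~\eqref{eq=edp}, and it may also be checked directly by differentiating the defining relation~\eqref{eq=defT} of $T$ (using $\partial_xT=\frac{e^{-T}-1}{xe^{-T}+y}$, $\partial_zT=\frac{1}{xe^{-T}+y}$ and $u=e^{-T}$), an identity valid at every point of $\Omega(y_0)$, boundary faces included. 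Hence $R_N=-(x\tilde E_1+y\tilde E_2)/(\sigma+1/N)$, and since $\abs{x\tilde E_1+y\tilde E_2}\le(x+y)\max(\abs{\tilde E_1},\abs{\tilde E_2})=\sigma\max(\abs{\tilde E_1},\abs{\tilde E_2})$ while $\sigma+1/N\ge\sigma$, we conclude $\abs{R_N(w,b,f)}\le\max(\abs{\tilde E_1},\abs{\tilde E_2})\le 2C(y_0)/N^2$, uniformly on $\Omega_N(y_0)$, and the same statement with $C(s)$ on $\Omega_N(s)$.

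The only delicate points are bookkeeping ones: ensuring the Taylor segments never leave the region where the derivative estimates of Theorem~\ref{thm:propertiesOfU} hold (handled above, the $w=0$ case being absorbed by the vanishing weight), and checking that $s<1$ is precisely what makes $\Omega(s)$ invariant under the relevant unit shifts --- this is the ``stability'' property alluded to in the remark on $\Omega(s)$, and it is what will later allow a single constant to be carried along the whole recurrence. The algebraic cancellation through the PDE is forced by the way the scheme was constructed and requires no ingenuity.
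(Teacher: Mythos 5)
Your proof is correct and follows essentially the same route as the paper's: Taylor-expand the shifted values to second order, let the PDE $u+x\partial_xu+(x+y)\partial_zu=0$ kill the zeroth- and first-order terms, and bound the remainder via the second-derivative estimates of Theorem~\ref{thm:propertiesOfU}. The only cosmetic difference is that the paper clears the denominator $(w+b+1)$ before expanding while you carry the weights through, and you are somewhat more explicit about the Taylor segments staying inside $\Omega(y_0)$ (resp.\ $\Omega(s)$) and about the $w=0$ case.
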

 \begin{proof}
   Let $m_N(w,b,f)$ be the sup of the second derivatives of $u$ on the cell
   $[(w\pm 1)/N]\times [(b \pm1)/N]\times[ (f\pm 1)/N]$.
 Let $\propNorm = (w/N,b/N,f/N)$, so that $u^N(w,b,f) = u(\propNorm)$. 
 Multiply \eqref{eq=defRnI} by $(w+b+1)$ and apply Taylor's formula:
   \begin{align*}
     (w+b+1)R_N(w,b,f) 
     &= (w+b+1)u(\propNorm)
     - w\left(
         u(\propNorm) -\frac{1}{N}\partial_x u(\propNorm) - \frac{1}{N}\partial_zu(\propNorm)
       \right) \\
     &\quad
     - b \left(
         u(\propNorm) - \frac{1}{N} \partial_zu(\propNorm)
       \right)
     + (w+b+1)\epsilon(w,b,f)
   \end{align*}
   where $\abs{\epsilon(w,b,f)} \leq \frac{1}{N^2} m_N(w,b,f)$.
   So
   \begin{align*}
     (w+b+1)R_N(w,b,f) 
     &= u(\propNorm) + w\PAR{\frac{1}{N}\partial_xu(\propNorm) + \frac{1}{N}\partial_zu(\propNorm)}
        + b\frac{1}{N} \partial_zu(\propNorm) \\
     &\qquad + (w+b+1)\epsilon(w,b,f).
   \end{align*}
   Since $u$ solves the PDE, all terms vanish except the last one, so
   \[ \abs{R_N(w,b,f)} \leq \frac{m_N(w,b,f)}{N^2}.\]
   The controls on the derivatives of $u$ given by
   Theorem~\ref{thm:propertiesOfU} show that $m_N$ is bounded by some $C(y_0)$ on 
   $\Omega_N(y_0)$, and by some $C(s)$ on $\Omega_N(s)$: 
   this concludes the proof.
 \end{proof}

 Now let $e_N(w,b,f)$ be the difference  $q(w,b,f) - u^N(w,b,f)$. 
 By the recurrence relation~\eqref{eq=reccQr} and the definition~\eqref{eq=defRnI} of $R_N$, 
 for $w\geq 1$ and $f\geq 1$ we get:
 \[
    e_N(w,b,f) = \frac{w}{w+b+1}e_N(w-1,b,f-1) + \frac{b}{w+b+1}e_N(w,b,f-1) - R_N(w,b,f).
 \]
 This still holds for $w=0$ if we define $e_N(-1,b,f) = 0$. \\
 Now define $\overline{e_N}(f) = \max \{ \abs{e_N(w,b,f)} : (w,b)\in \xN^2, (w,b,f) \in \Omega_N(y_0)\}$. 
 The key fact is that, if $(w,b,f)$ is in $\Omega_N(y_0)$, the same is true for $(w-1,b,f-1)$ and $(w,b,f-1)$. 
 Therefore:
 \begin{align*}
   \overline{e_N}(f) 
   &\leq \overline{e_N}(f-1) + \max\{R_N(w,b,f): w,b ; (w,b,f)\in \Omega_N(y_0)\} \\
   &\leq \overline{e_N}(f-1) + \frac{C(y_0)}{N^2}.
 \end{align*}
 By induction, since $f\leq N$, 
 \[
   \overline{e_N}(f) \leq \overline{e_N}(0) + \frac{C(y_0)}{N}.
 \]
 Since $e_N(w,b,0) = 0$, we are done. 

 To prove the bounds on $\Omega(s)$, the strategy is exactly the same. Once more, 
 the crucial step is to remark that $(w-1,b,f-1)$ and $(w,b,f-1)$ belong to 
 $\Omega_N(s)$ whenever $(w,b,f)\in\Omega_N(s)$: this stability is the reason 
 behind the very definition of $\Omega(s)$. 

 \subsection{Derivative in the  \texorpdfstring{$x$}{x} direction}
 \label{sec:convergence_deltaxq}
 Let us now prove the convergence of the (renormalized) finite differences of $q$ 
 to the derivatives of $u$. We proceed in three steps:
 \begin{enumerate}
 	\item find a recurrence relation for the finite differences;
	\item find a PDE for the derivative;
	\item use the PDE to show that the discretization of the derivatives
	  almost follows the same recurrence relation as the finite differences.
 \end{enumerate}
 We begin by the convergence of the derivatives in the $x$ direction. 
 Define $u_x = \partial_xu$, and
 recall that $\delta_x q(w,b,f)$ is the finite difference:
 \[\delta_x q(w,b,f) = q(w+1,b,f) - q(w,b,f).\]
 \paragraph{Step 1.} In order to obtain a recurrence relation for $\delta_x q$, starting
 from its definition, we apply~\eqref{eq=reccQr} two times to $q(w+1,b,f)$ and $q(w,b,f)$:
 \begin{align}
   \delta_x q(w,b,f) 
   &= \frac{w+1}{w+b+2} q(w,b,f-1) + \frac{b}{w+b+2}q(w+1,b,f-1) 
                                   \notag \\ & \qquad
      - \frac{w}{w+b+1} q(w-1,b,f-1) - \frac{b}{w+b+1}q(w,b,f-1)
                                   \notag\\
   &= \frac{w}{w+b+1}\delta_xq(w-1,b,f-1) + \frac{b}{w+b+2}\delta_xq(w,b,f-1)
                                    \notag\\ &\qquad
     + \left( \frac{w+1}{w+b+2} - \frac{w}{w+b+1}\right) q(w,b,f-1) 
                                    \notag\\ &\qquad 
     + \left( \frac{b}{w+b+2} - \frac{b}{w+b+1}\right) q(w,b,f-1)
                                    \notag\\
   &= \frac{w}{w+b+1}\delta_xq(w-1,b,f-1) + \frac{b}{w+b+2}\delta_xq(w,b,f-1) 
                                    \notag\\ &\qquad
      \label{eq=reccDeltaXQ}
      + \frac{1}{(w+b+1)(w+b+2)}  q(w,b,f-1).
 \end{align}

 \paragraph{Step 2.} Now let us find a PDE for $u_x$. 
 Recall  that $u_x = \partial_x u$. Since $u+ x\partial_xu + (x+y)\partial_zu =0$, 
 $u_x$ satisfies:
 \[ 2u_x + x\partial_x(u_x) + \partial_zu + (x+y)\partial_zu_x = 0.\]
 Plugging the first equation into the second gives:
 \[ 2u_x + x\partial_x u_x  - \frac{1}{x+y}u -\frac{x}{x+y}u_x + (x+y)\partial_z u_x = 0,\]
 which simplifies to:
 \begin{equation}
   \label{eq=edpDxu}
   \frac{x+2y}{x+y} u_x + x\partial_x u_x  + (x+y)\partial_z u_x = \frac{1}{x+y}u.
 \end{equation}

 \paragraph{Step 3.} Let $u_x^N$ be the discretization of $u_x$.
 This function should approximately satisfy the same relation 
 as $N\delta_xq$, i.e.{} the product of Equation~\eqref{eq=reccDeltaXQ} by $N$. 
 Denote by $R_N$ the error in this approximation, i.e. 
 $R_N$ is such that:
 \begin{align}
 u_x^N(w,b,f) &= \frac{w}{w+b+1} u_x^N(w-1,b,f-1) + \frac{b}{w+b+2}u_x^N(w,b,f-1)
 \notag \\
 &\qquad + \frac{N}{(w+b+1)(w+b+2)} q(w,b,f-1)
 + R_N(w,b,f). 
 \label{eq=defRn}
 \end{align}
 The error $e_N = u_x^N - N\delta_xq$ satisfies:
 \[
 e_N(w,b,f) = \frac{w}{w+b+1} e_N(w-1,b,f-1) + \frac{b}{w+b+1}e_N(w,b,f-1) + R_N(w,b,f)
 \]
 so the same proof as before applies, provided we show that
 \begin{itemize}
   \item $R_N$ is $\mathcal{O}(N^{-2})$;
   \item $e_N(w,b,0)$ is small. 
 \end{itemize}

 \begin{lmm}[$R_N$ is small]
   For any $y_0$, there exists a $C(y_0)$ such that
 \begin{equation}
   \label{eq=boundOnRn}
 \forall (w,b,f)\in \Omega_N(y_0),  \quad R_N(w,b,f) \leq \frac{C(y_0)}{N^2}.
 \end{equation}
 The same holds uniformly on $\Omega_N(s)$ if $s<1$. 
 \end{lmm}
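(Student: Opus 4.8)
The plan is to reproduce, one order higher, the argument of the Proposition in Section~\ref{sec:convergence} that bounds the first residual: expand the terms $u_x^N$ appearing in~\eqref{eq=defRn} by Taylor's formula and then use that $u_x$ solves the PDE~\eqref{eq=edpDxu}. The only genuinely new ingredient is that the inhomogeneous term of~\eqref{eq=defRn} involves $q$ rather than $u^N$, so one must feed in the already-proved estimate~\eqref{eq:qConvergeVersU}. Throughout I write $n = w+b$ and $\propNorm = (w/N,b/N,f/N)$, so that $u_x^N(w,b,f)=u_x(\propNorm)$; the stability of the sets (already used in Section~\ref{sec:convergence}) guarantees that $\propNorm$ together with its shifts by $(-1/N,0,-1/N)$ and $(0,0,-1/N)$ all lie in $\Omega(y_0)$ (resp.\ $\Omega(s)$), on which Theorem~\ref{thm:propertiesOfU} (and, as its proof notes, its extension to all orders) gives uniform bounds on the derivatives of $u$.

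First I would multiply~\eqref{eq=defRn} by $(n+1)(n+2)$ to clear denominators, which gives
\begin{align*}
  (n+1)(n+2)\,R_N(w,b,f)
  &= (n+1)(n+2)\,u_x^N(w,b,f) - w(n+2)\,u_x^N(w-1,b,f-1) \\
  &\quad - b(n+1)\,u_x^N(w,b,f-1) - N\,q(w,b,f-1).
\end{align*}
Then I would Taylor-expand $u_x^N(w-1,b,f-1)$ and $u_x^N(w,b,f-1)$ to first order around $\propNorm$, the second-order remainders being bounded by $\bigO(N^{-2})$ times the (bounded) third derivatives of $u$. For the last term I would write $q(w,b,f-1)=u^N(w,b,f-1)+e_N(w,b,f-1)$, expand $u^N(w,b,f-1)=u(\propNorm)-\tfrac1N\partial_z u(\propNorm)+\bigO(N^{-2})$, and use $\abs{e_N(w,b,f-1)}\le C/N$ from~\eqref{eq:qConvergeVersU}; since it is multiplied by $N$, this contributes only an $\bigO(1)$ term.

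Collecting the contributions, the coefficient of $u_x(\propNorm)$ is $(n+1)(n+2)-w(n+2)-b(n+1)=w+2b+2$, the first-order terms regroup as $n\PAR{\tfrac wN\partial_x u_x(\propNorm)+\tfrac nN\partial_z u_x(\propNorm)}$ plus an $\bigO(1)$ leftover (using $w/N,\, (2w+b)/N=\bigO(1)$), and the $q$-term yields $-N u(\propNorm)+\bigO(1)$. Now multiplying the PDE~\eqref{eq=edpDxu} at $\propNorm$ by $n$ gives exactly $n\PAR{\tfrac wN\partial_x u_x+\tfrac nN\partial_z u_x}=N u-(w+2b)u_x$ at $\propNorm$; substituting this cancels the $u$-term and leaves $(n+1)(n+2)\,R_N(w,b,f)=2\,u_x(\propNorm)+\bigO(1)=\bigO(1)$, i.e.\ $(n+1)(n+2)\,\abs{R_N(w,b,f)}\le C$ for a constant $C=C(y_0)$ (resp.\ $C(s)$).

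To conclude, I would divide by $(n+1)(n+2)$ and use the lower bound on $n$ that is the whole point of the sets $\Omega(y_0)$ and $\Omega(s)$: on $\Omega_N(y_0)$ one has $n=w+b\ge b\ge y_0 N$, and on $\Omega_N(s)$ the condition $x-z\ge (1-s)/(2+2s)$ forces $x\ge(1-s)/(2+2s)$, hence $n\ge w\ge \tfrac{1-s}{2+2s}N$; in either case $(n+1)(n+2)\ge c N^2$ and therefore $\abs{R_N(w,b,f)}\le C(y_0)/N^2$ (resp.\ $C(s)/N^2$). The delicate point is purely bookkeeping: several quantities that are only $\bigO(1)$ rather than $\bigO(N^{-2})$ — notably the term $N\,e_N(w,b,f-1)$ coming from the weaker $\bigO(N^{-1})$ control of $q$, and the $\bigO(n)$-sized discrepancies created by the two different denominators $n+1$ and $n+2$ — become negligible only after the final division by $(n+1)(n+2)\asymp N^2$, so one must check that each of them is genuinely $\bigO(1)$ uniformly and that the bound $n\gtrsim N$ is indeed available; this is exactly where the less natural second condition in the definition of $\Omega(s)$ enters.
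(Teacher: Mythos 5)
Your proposal is correct and follows essentially the same route as the paper: Taylor-expand the $u_x^N$ terms, cancel using the PDE~\eqref{eq=edpDxu} evaluated at $(w/N,b/N,f/N)$, absorb the inhomogeneous term via the already-proved bound~\eqref{eq:qConvergeVersU} on $q-u^N$, and finish with the lower bound $w+b\gtrsim N$ supplied by the definitions of $\Omega(y_0)$ and $\Omega(s)$. The only difference is organizational — you clear both denominators $(w+b+1)(w+b+2)$ at once and show the cleared residual is $\bigO(1)$, whereas the paper multiplies only by $(w+b+1)$ and tracks the leftover $\bigO(1/N)$ terms individually — and your bookkeeping (coefficient $w+2b+2$, the $2u_x+\bigO(1)$ remainder, and the lower bounds on $b$ resp.\ $w$) checks out.
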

 \begin{proof}
   Multiply \eqref{eq=defRn} by $(w+b+1)$ and use Taylor's formula:
   \begin{align*}
   &(w+b+1)u_x^N(w,b,f) \\
   &= w\left(u_x^N(w,b,f) - \frac{1}{N}(\partial_xu_x)^N(w,b,f) 
                          - \frac{1}{N}(\partial_zu_x)^N(w,b,f)\right) \\
   &\qquad  + b\left(1-\frac{1}{w+b+2}\right)
               \left(u_x^N(w,b,f) - \frac{1}{N} (\partial_zu_x)^N(w,b,f)\right) \\
   &\qquad + \frac{N}{w+b+2} q(w,b,f-1) + (w+b+1)(R_N(w,b,f) + \epsilon(w,b,f)),
 \end{align*}
 where $\abs{\epsilon(w,b,f)} \leq m_N(w,b,f)$. 
 Gather all the $u_x^N$ terms on the left hand side.
   \begin{align*}
     \frac{w+2b+2}{w+b+2}u_x^N(w,b,f)
   &= -w\left( \frac{1}{N}(\partial_xu_x)^N(w,b,f) + \frac{1}{N}(\partial_zu_x)^N(w,b,f)\right) \\
   &\quad  - \frac{b}{N}\left(1-\frac{1}{w+b+2}\right) \partial_zu_x^N(w,b,f) \\
   &\quad + \frac{N}{w+b+2} q(w,b,f-1) + (w+b+1)(R_N(w,b,f) + \epsilon(w,b,f)).
 \end{align*}
 To use the fact that $u_x$ satisfies \eqref{eq=edpDxu} we isolate the relevant terms:
   \begin{align*}
   &\frac{w+2b}{w+b}u_x^N(w,b,f) - \frac{2b}{(w+b)(w+b+2)}u_x^N(w,b,f) \\
   &\quad = -w\left(
                \frac{1}{N}(\partial_xu_x)^N(w,b,f) + \frac{1}{N}(\partial_zu_x)^N(w,b,f)
	      \right) \\
   &\qquad  - \frac{b}{N}  (\partial_zu_x)^N(w,b,f) + \frac{b}{N}\frac{1}{w+b+2}(\partial_zu_x)^N(w,b,f)\\
   &\qquad +\frac{N}{w+b}u^N(w,b,f) + \frac{N}{w+b} (q(w,b,f-1)-u^N(w,b,f)) \\
   &\qquad - \frac{2N q(w,b,f-1)}{(w+b)(w+b+2)} + (w+b+1)(R_N(w,b,f) + \epsilon(w,b,f)).
 \end{align*}
 Thanks to \eqref{eq=edpDxu} applied at the point $(w/N,b/N,f/N)$, we obtain:
   \begin{align*}
   &- \frac{2b}{(w+b)(w+b+2)}u_x^N(w,b,f) \\
   & = \frac{b}{N}\frac{1}{w+b+2}(\partial_zu_x)^N(w,b,f) \\
   &\quad + \frac{N}{w+b} (q(w,b,f-1)-u^N(w,b,f)) - \frac{2N q(w,b,f-1)}{(w+b)(w+b+2)} \\
   &\quad + (w+b+1)(R_N(w,b,f) + \epsilon(w,b,f)).
 \end{align*}
 Isolating $R_N$ in this equation and using the fact that $b\geq Ny_0$,
 the bounds $\abs{u^N_x} \leq 1$, $\abs{\partial_z u_x^N}\leq C(y_0)$ as stated in Theorem \ref{thm:propertiesOfU}, and
 the approximation result on $q$ (Equation~\eqref{eq:qConvergeVersU}), we get
 the bound~\eqref{eq=boundOnRn}. On $\Omega(s)$ the proof is the same, replacing
 the lower bound on $b$ on the denominator by the control
 \[ w = Nx \geq N\frac{1-s}{2+2s}.\]
 This concludes the proof of the lemma. 
 \end{proof}

 To conclude the proof of \eqref{eq:deltaXqConverge}, 
 we need only consider the base case $f = 0$.  Since $q(\cdot,\cdot,0)$ is identically $1$,
$\delta_xq$ is zero for $f=0$. Similarly $u$ is identically $1$ so its 
$x$-derivative is $0$, so $e_N(w,b,0) = 0$, and \eqref{eq:deltaXqConverge}
follows by the same induction as before. 

 \subsection{The other derivatives}
 \label{sec:convergence_deltayq}
 Let us now turn to the convergence of the $y$-derivative $u_y$. 
 This function satisfies the PDE:
 \begin{equation}
   \label{eq=edpDyu}
   u_y + x \partial_xu_y + (x+y)\partial_z u_y = -\frac{1}{x+y}u - \frac{x}{x+y}u_x.
 \end{equation}
 Note that the right hand side depends on $u$ and $u_x$, for which we have already proved
 approximation results. 

 Recall that $\delta_yq(w,b,f) = q(w,b+1,f) - q(w,b,f)$. Using the recurrence
 relation \eqref{eq=reccQr} for $q$ we find first that
 \begin{align*}
   &\delta_yq(w,b,f) \\
   &= \frac{w}{w+b+2}q(w-1,b+1,f-1) + \frac{b+1}{w+b+2}q(w,b+1,f-1) \\
   &\qquad 
      - \frac{w}{w+b+1}q(w-1,b,f-1) - \frac{b}{w+b+1}q(w,b+1,f-1)
\\
   &= \frac{w}{w+b+1}\delta_yq(w-1,b,f-1) + \frac{b}{w+b+1}\delta_yq(w,b,f-1) \\
   &\qquad 
      - \frac{w}{(w+b+1)(w+b+2)}q(w-1,b+1,f-1) \\
   &\qquad
      + \frac{w+1}{(w+b+1)(w+b+2)}q(w,b+1,f-1)
\\
   &= \frac{w}{w+b+1}\delta_yq(w-1,b,f-1) + \frac{b}{w+b+1}\delta_yq(w,b,f-1) \\
   &\qquad 
      + \frac{w}{(w+b+1)(w+b+2)}\delta_xq(w-1,b+1,f-1) \\
   &\qquad 
   + \frac{1}{(w+b+1)(w+b+2)}q(w,b+1,f-1).
\end{align*}
Once more, the discretization $u_y^N$ of $u_y$ should behave approximately like 
$N\delta_y q$. Define $R_N$ to be the error in this approximation, 
i.e.\ $R_N$ is such that:
\begin{align*}
  u_y^N(w,b,f) &= \frac{w}{w+b+1}u_y^N(w-1,b,f-1) + \frac{b}{w+b+1}u_y^N(w,b,f-1) \\
   & + \frac{wN}{(w+b+1)(w+b+2)}\delta_xq(w-1,b+1,f-1)\\
   & + \frac{N}{(w+b+1)(w+b+2)}q(w,b+1,f-1) \\
   & + R_N(w,b,f). 
\end{align*}
To study $R_N$, multiply both sides by $(w+b+1)$, and use Taylor's formula:
\begin{align*}
 &(w+b+1) u_y^N(w,b,f) \\
   &= w u_y^N(w,b,f) + b u_y^N(w,b,f) 
   - \frac{w}{N}( (\partial_x + \partial_z)u_y)^N(w,b,f) 
   - \frac{b}{N}(  \partial_zu_y)^N(w,b,f ) \\ 
   &\qquad 
      + \frac{wN}{w+b+2}\delta_xq(w-1,b,f-1) + \frac{N}{w+b+2}q(w,b+1,f-1) \\
   &\qquad + (w+b+1) R_N(w,b,f) + \epsilon(w,b,f),
\end{align*}
where $\epsilon = \bigO(N^{-1})$ (uniformly on $\Omega(y_0)$ and on
$\Omega(s)$). The term $(w+b)(u_y)^N$ cancels out. The remaining terms almost
cancel out thanks to \eqref{eq=edpDyu}, and we are left with
\[
  \begin{split}
  (w+b+1) R_N(w,b,f)
  = \bigO(1/N) + \left(\frac{N}{w+b+2}q(w,b+1,f-1) - \frac{N}{w+b}u^N(w,b,f)\right) \\
  + \left( \frac{wN}{w+b+2} \delta_xq(w-1,b+1,f-1) - \frac{w}{w+b}u_x^N(w,b,f)\right).
\end{split}
\]
Using the approximation results \eqref{eq:qConvergeVersU}, \eqref{eq:deltaXqConverge}, and  the fact that $b\geq Ny_0$
on $\Omega_N(y_0)$, or that $w\geq N \frac{1-s}{2+2s}$ on $\Omega_N(s)$, 
we can prove that
\[ \abs{R_N(w,b,f)} \leq C(y_0)N^{-2}.\]
The last step is the same as before: the difference $e_N = N\delta_yq - u_y^N$ 
satisfies the nice recurrence relation
\begin{align*}
  e_N(w,b,f) = \frac{w}{w+b+1}e_N(w-1,b,f-1) + \frac{b}{w+b+1}e_N(w,b,f-1) - R_N(w,b,f).
\end{align*}
For the base case $(f =0)$, $e_N$ is identically zero, and we get by induction:
\[ 
  \max\left\{ \abs{e_N(w,b,f)} :  (w,b) \text{ such that } (w,b,f)\in\Omega_N(y_0) \right\}
  \leq \frac{C(y_0)f}{N^2},
\]
which proves \eqref{eq:deltaYqConverge}  since $f\leq N$. 
The proof is similar on $\Omega_N(s)$.

\subsection{Proof of Theorem~\ref{thm=approximation}}
\label{sec:from_q_to_pb}
Let us see how the statements of Theorem~\ref{thm=approximation} regarding $p_b$ and $p_w$ 
may be deduced from Theorem~\ref{thm:qConvergeVersU}. The three proofs being similar, 
we only consider the last equation~\eqref{eq=cvg_differences},
that is, we prove
  \begin{align*}
    \forall (w,b,f) \in \Omega_N(y_0), \quad
    \abs{ p_b(w,b,f) - p_w(w,b,f) - \frac{1}{N}(\partial_xv - \partial_yv)^N(w,b,f)} \leq \frac{C(y_0)}{N^2}.
  \end{align*}

Recall that $p_w$ and $p_b$ can be expressed in terms of $q$~: by \eqref{eq=pw_pb_and_q}, 
  \begin{align}
    p_w(w,b,f) &= 1 - q_w(w,b,f) = 1 - q(w-1,b,f),  \\
    p_b(w,b,f) &= 1 - q_b(w,b,f) = 1 - q(w,b-1,f).
  \end{align}
First write everything in terms of $q$ and $u$, recalling that $u = 1-v$:
  \begin{align}
    \notag
     &p_b(w,b,f) - p_w(w,b,f) - \frac{1}{N}(\partial_xv - \partial_yv)^N(w,b,f) \\
     \notag
     &\qquad = q(w-1,b,f) - q(w,b-1,f) + \frac{1}{N}(\partial_xu - \partial_yu)^N(w,b,f) \\
     \label{eq=jackestici}
     &\qquad = q(w-1,b,f) - q(w,b,f) + \frac{1}{N}(\partial_x u)^N(w,b,f) \\
     \label{eq=joeestla}
     &\qquad\quad + q(w,b,f) - q(w,b-1,f) - \frac{1}{N}(\partial_y u)^N(w,b,f).
  \end{align}
  The absolute value of the last line~\eqref{eq=joeestla} satifies:
  \[\begin{split}
  \abs{q(w,b,f) - q(w,b-1,f) - \frac{1}{N} (\partial_y u)^N(w,b,f)} \\
  \ \ \leq\frac{1}{N} \abs{N\delta_yq(w,b-1,f) - (\partial_y u)^N(w,b-1,f)} \\
  + \frac{1}{N} \abs{(\partial_yu)^N(w,b-1,f) - (\partial_y u)^N(w,b,f)}.
\end{split}
\]
Fix $0<y'_0<y_0$ to ensure that $(w,b-1,f)$ is in $\Omega_N(y'_0)$ when
$(w,b,f)$ is in $\Omega_N(y'_0)$.  By~\eqref{eq:deltaYqConverge} the first term
is bounded by $\frac{C(y'_0)}{N^2}$.  The controls on $u$ from
Theorem~\ref{thm:propertiesOfU} imply that the second term is also bounded by
$C(y'_0)/N^2$. The same arguments may be applied to the terms
in~\eqref{eq=jackestici}. This concludes the proof of~\eqref{eq=cvg_differences}.

\subsection{Second moments}
\label{sec:second_moments}
In order to derive the diffusion limit, we will need information on the
covariance structure of the couple $(X,Y)$. This information
will be deduced in Section~\ref{sec:aboutXTilde} from 
estimates on the following variant of the function $q$. 

\begin{dfn}
  \label{dfn:qTilde}
For any positive integers $w$, $b$ and $f$, 
we denote by $\tq(w,b,f)$ the probability
that in an urn composed of $w$ white balls, $b$ black balls and $2$ red balls, the $2$ red balls are not drawn after $f$ trials.
\end{dfn}

By conditioning, we see that  $\tq$ satisfies the recurrence relation:
\[
  \tq(w,b,f) = \frac{w}{w+b+2} \tq(w-1,b,f-1) + \frac{b}{w+b+2}\tq(w,b,f-1).
\]
As before we prove that $\tq$ converges in some sense to 
a limit function $\tu$. The same heuristic reasoning as before leads to
 the candidate PDE:
\[ -\frac{2}{x+y} \tu -  \frac{x}{x+y}(\partial_x \tu + \partial_z\tu)  - \frac{y}{x+y}\partial_z \tu = 0\]
which we rewrite as
\[ 2\tu + x\partial_x \tu + (x+y)\partial_z \tu  = 0,\]
with the boundary condition
\[\tu(x,y,0) = 1.\]
The only difference between this equation and the PDE~\eqref{eq=edp}
is that $F$ is replaced by $F/2$. We solve this new equation in the same way. The characteristics are:
 \[
 \begin{cases}
   x(t) = x_0e^{-t/2} \\
   y(t) = y_0 \\
   z(t) = x_0 (e^{-t/2} - 1) - y_0t/2 + z_0. 
 \end{cases}
 \]
 The $\tT$ that satisfies $z(\tT) = 0$ is just $\tT = 2T$, so the solution
 $\tu$ is given by:
  \[
    \tu(x_0,y_0,z_0) = \exp(2T(x_0,y_0,z_0)) = u^2(x_0,y_0,z_0).
  \]
 Following the same strategy as before, 
 we prove:
 \begin{proposition}[Asymptotics of $\tq$]
   For any $y_0$ there exists $C(y_0)$ such that for all $N$ and
   for all $(w,b,f)\in \Omega_N(y_0)$, 
   \label{prop:qTilde}
 \begin{align*}
   \abs{(\tq -  (u^N)^2)(w,b,f)} \leq \frac{C(y_0)}{N},   \\
   \abs{( \tq(w,b-1,f) - \tq(w-1,b,f) ) - \frac{2}{N}( u(\partial_x - \partial_y)u)^N(w,b,f)} 
   \leq \frac{C(y_0)}{N}.
 \end{align*}
 Similar bounds hold uniformly on $\Omega_N(s)$ if $s<1$. 
 \end{proposition}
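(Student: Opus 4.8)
\textbf{Proof proposal for Proposition~\ref{prop:qTilde}.}

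The plan is to mimic exactly the strategy already carried out for $q$ in Sections~\ref{sec:convergence}, \ref{sec:convergence_deltaxq} and \ref{sec:convergence_deltayq}, applied now to $\tq$ and its finite differences. First I would establish the estimate $\abs{(\tq - (u^N)^2)(w,b,f)} \leq C(y_0)/N$ (and the analogue on $\Omega_N(s)$). Since $\tu = u^2$ is smooth with bounded derivatives on $\Omega(y_0)$ (by Theorem~\ref{thm:propertiesOfU}, as products and compositions of bounded quantities), the discretization $(u^N)^2 = \tu^N$ satisfies the recurrence $\tq(w,b,f) = \frac{w}{w+b+2}\tq(w-1,b,f-1) + \frac{b}{w+b+2}\tq(w,b,f-1)$ up to a remainder $\tilde R_N = \bigO(N^{-2})$: multiply the defect by $(w+b+2)$, Taylor-expand $\tu^N$ to second order, and use that $\tu$ solves $2\tu + x\partial_x\tu + (x+y)\partial_z\tu = 0$ so that the zeroth- and first-order terms cancel, leaving only the Taylor error controlled by $m_N/N^2$. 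Then the error $\tilde e_N = \tq - \tu^N$ obeys the same convex-combination recurrence with source $-\tilde R_N$, and the key stability fact --- that $(w-1,b,f-1)$ and $(w,b,f-1)$ remain in $\Omega_N(y_0)$ (resp.\ $\Omega_N(s)$) whenever $(w,b,f)$ is --- lets one sum the $N^2$ errors over the at most $N$ values of $f$, with vanishing base case $\tilde e_N(w,b,0)=0$, to get the $\bigO(N^{-1})$ bound.

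For the second estimate I would introduce the finite differences $\delta_x\tq$ and $\delta_y\tq$ and follow the three-step scheme: (1) derive recurrence relations for $\delta_x\tq$ and $\delta_y\tq$ by applying the recurrence for $\tq$ twice and collecting terms, exactly as in~\eqref{eq=reccDeltaXQ} --- this will produce, besides the convex combination of shifted $\delta_x\tq$ (resp.\ $\delta_y\tq$), a lower-order source term involving $\tq$ itself with a $1/((w+b+1)(w+b+2))$ prefactor, and in the $y$-case also a $\delta_x\tq$ term; (2) differentiate the PDE $2\tu + x\partial_x\tu + (x+y)\partial_z\tu = 0$ in $x$ and in $y$, substituting back to eliminate lower-order terms, to obtain first-order PDEs for $\tu_x := \partial_x\tu = 2u\partial_xu$ and $\tu_y := \partial_y\tu = 2u\partial_yu$ with right-hand sides that are explicit bounded combinations of $\tu$ and $\tu_x$; (3) check that the discretizations $\tu_x^N$, $\tu_y^N$ satisfy the same recurrences as $N\delta_x\tq$, $N\delta_y\tq$ up to a remainder $R_N = \bigO(N^{-2})$, where the remainder estimate uses the already-established $\bigO(N^{-1})$ control on $\tq - \tu^N$ (from the first part) to replace $\tq(w,b,f-1)$ by $\tu^N$ in the source term, together with the lower bound $b \geq Ny_0$ on $\Omega_N(y_0)$ (resp.\ $w \geq N(1-s)/(2+2s)$ on $\Omega_N(s)$) to absorb the $N$ in the numerator of the source prefactor. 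Then the usual telescoping/induction gives $\abs{N\delta_x\tq - \tu_x^N} \leq C(y_0)/N$ and likewise for $\tu_y$.

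Finally I would assemble the stated combination: write
\[
  \tq(w,b-1,f) - \tq(w-1,b,f) = \bigl(\tq(w,b-1,f) - \tq(w,b,f)\bigr) + \bigl(\tq(w,b,f) - \tq(w-1,b,f)\bigr) = -\delta_y\tq(w,b-1,f) + \delta_x\tq(w-1,b,f),
\]
and compare each finite difference to $\tfrac1N(\partial_y\tu)^N$ resp.\ $\tfrac1N(\partial_x\tu)^N$ evaluated at a shifted point, using the convergence of the renormalized differences just proved together with the Lipschitz control on $\partial_x\tu$, $\partial_y\tu$ from their boundedness on a slightly enlarged domain $\Omega(y'_0)$ with $y'_0<y_0$ (exactly the argument of Section~\ref{sec:from_q_to_pb}); since $\partial_x\tu - \partial_y\tu = 2u(\partial_x - \partial_y)u$, this yields the claimed bound. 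The main obstacle, as in the original arguments, is Step~3 for the $y$-derivative: keeping track of all the $1/(w+b+1)$ versus $1/(w+b+2)$ discrepancies and the extra $\delta_x\tq$ source term, and verifying that after the Taylor expansion and the substitution of the PDE for $\tu_y$ the genuinely problematic terms are all either $\bigO(N^{-2})$ outright or controlled via the lower bound on $b$ (resp.\ $w$) --- this is bookkeeping-heavy but structurally identical to what was done for $q$, so no new idea is needed.
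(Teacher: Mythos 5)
Your proposal is correct and follows exactly the route the paper intends: the paper's own ``proof'' of Proposition~\ref{prop:qTilde} is the single sentence ``Following the same strategy as before, we prove\dots'', i.e.\ it delegates to the scheme of Sections~\ref{sec:convergence}--\ref{sec:from_q_to_pb} applied to the recurrence $\tq(w,b,f)=\frac{w}{w+b+2}\tq(w-1,b,f-1)+\frac{b}{w+b+2}\tq(w,b,f-1)$ and the PDE $2\tu+x\partial_x\tu+(x+y)\partial_z\tu=0$ with $\tu=u^2$, which is precisely what you flesh out (including the correct decomposition $\tq(w,b-1,f)-\tq(w-1,b,f)=-\delta_y\tq(w,b-1,f)+\delta_x\tq(w-1,b,f)$ and the use of domain stability and the lower bounds on $b$, resp.\ $w$).
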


\section{The multi-generation model}
\label{sec:diffusion}
\subsection{Main line of the proof}
To prove the diffusion limit stated in Theorem~\ref{thm:diffusion}, we 
follow the presentation of Durrett in~\cite{Dur96}. 
For each $n$, we have defined a Markov chain $(X^n_k)_{k\in\xN}$, that 
lives on the state space $\stateSpace = \{0, \frac{1}{n}, \ldots, 1\}\subset \xR$. 
Let $\espX{\cdot}$ and $\varX{\cdot}$  denote the expectation and 
variance operators for the Markov chained started at $X^n_0 = x$.
Define, for each $n$ and each $x\in\stateSpace$, the ``infinitesimal
variance'' $a^n(x)$ and the ``infinitesimal mean'' $b^n(x)$ by:
\begin{align}
  \label{eq:defInfVar}
  a^n(x) &=  n \varX{X^n_1}, \\
  \label{eq:defInfMean}
  b^n(x) &= n\left( \espX{X^n_1} - x\right),
\end{align}
and let 
\[
  c^n(x) = n\espX{ \abs{X^n_1 - x}^3}.
\]
  Suppose additionally that $a$ and $b$
  are two continuous functions for which the
  martingale problem is well posed, i.e., for each $x$ there 
  is a unique measure $P_x$ on $\mathcal{C}([0,\infty),\mathbb{R})$
  such that $P_x[X_0 = x] = 1$ and 
  \[ X_t - \int_0^t b(X_s) ds \qquad \text{ and } \qquad X_t^2 - \int_0^t a(X_s)ds \]
  are local martingales.  In this setting, the convergence 
  of the discrete process to its limit is a consequence of the following result. 

  \begin{thrm}[Diffusion limit,\cite{Dur96} Theorem 8.7.1 and Lemma~8.8.2]
  \label{thm:Durrett}
  Suppose that the following three conditions hold.
  \begin{enumerate}
  	\item 
  The infinitesimal mean and variance converge uniformly:
  \begin{align*}
    \lim_n \sup_{x\in\stateSpace} \abs{a^n(x) - a(x)} = 0, \\
    \lim_n \sup_{x\in\stateSpace} \abs{b^n(x) - b(x)} = 0.
  \end{align*}
\item  The size of the discrete jumps is small enough:
  \[
    \lim_n \sup_{x\in\stateSpace} c^n(x) = 0.
  \]
\item The initial condition $X_0^n = x^n$ converges to $x$. 
  \end{enumerate}
  Then the renormalized process converges to the diffusion $X_t$. 
\end{thrm}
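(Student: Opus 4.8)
The plan is to follow the classical three-step route for diffusion approximations: prove tightness of the rescaled processes in the Skorokhod space $D([0,\infty),\xR)$, show that every weak limit point solves the martingale problem for $L=\tfrac12 a\partial_{xx}+b\partial_x$, and then invoke the assumed well-posedness of that problem to conclude that the whole sequence converges to the unique diffusion. The engine for all steps is the discrete generator $A^nf(x)=n\espX{f(X^n_1)-f(x)}$ acting on a test function $f\in C^3([0,1])$. A second-order Taylor expansion of $f$ about $x$, together with $\espX{\abs{X^n_1-x}^2}=\tfrac1n a^n(x)+\tfrac1{n^2}(b^n(x))^2$, gives
\[
  A^nf(x)=b^n(x)\,f'(x)+\tfrac12 f''(x)\PAR{a^n(x)+\tfrac1n(b^n(x))^2}+\rho^n(x),
  \qquad \abs{\rho^n(x)}\le\tfrac16\nrm{f'''}{\infty}\,c^n(x).
\]
Conditions (1) and (2) then force $A^nf\to Lf$ uniformly on $\stateSpace$: the cross term $\tfrac1n(b^n)^2$ is negligible since $b^n$ is uniformly bounded, and the remainder vanishes uniformly by the jump condition. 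By the Markov property this upgrades to an approximate martingale: writing $X^n_t=X^n_{\floor{nt}}$ for the rescaled process (so that one generation takes time $1/n$, matching the factor $n$ in the definitions of $a^n,b^n,c^n$),
\[
  M^n_t(f)=f(X^n_t)-f(X^n_0)-\int_0^t A^nf(X^n_s)\,ds
\]
is a martingale up to a correction of size $\bigO(1/n)$ coming from the fractional part of $nt$.

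For tightness I would first take $f(x)=x$, which has vanishing remainder, to get the decomposition $X^n_t=X^n_0+\int_0^t b^n(X^n_s)\,ds+M^n_t$ with $M^n$ a martingale whose predictable bracket is $\langle M^n\rangle_t=\int_0^t a^n(X^n_s)\,ds+\bigO(1/n)$. Both integrands are uniformly bounded (by $\nrm{b}{\infty}$ and $\nrm{a}{\infty}$ up to $o(1)$). Compact containment is automatic because the state space is $[0,1]$, so by the Aldous--Rebolledo criterion tightness reduces to the increment bounds $\abs{\int_\tau^{\tau+\delta}b^n(X^n_s)\,ds}\le\nrm{b}{\infty}\delta$ and $\abs{\langle M^n\rangle_{\tau+\delta}-\langle M^n\rangle_\tau}\le\nrm{a}{\infty}\delta$ for bounded stopping times $\tau$, both tending to $0$ with $\delta$ uniformly in $n$. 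Finally the maximal-jump estimate
\[
  \prb{\max_{k\le\floor{nt}}\abs{X^n_k-X^n_{k-1}}>\varepsilon}\le\frac{t\,\sup_x c^n(x)}{\varepsilon^3},
\]
obtained by a union bound over the $\bigO(n)$ steps and Markov's inequality, shows via condition (2) that every weak limit point is almost surely continuous.

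To identify the limit, let $X$ be a subsequential weak limit. For $f\in C^\infty$, bounded continuous $H$, and fixed times $s_1<\dots<s_m\le s<t$, the approximate martingale property gives $\esp{(M^n_t(f)-M^n_s(f))\,H(X^n_{s_1},\dots,X^n_{s_m})}=\bigO(1/n)$. Passing to the limit along the subsequence --- replacing $A^nf$ by $Lf$ through the uniform convergence above, using the boundedness of $f$ and $Lf$ for uniform integrability, and the continuity of the path functionals at continuous trajectories --- yields that $f(X_t)-f(X_0)-\int_0^t Lf(X_s)\,ds$ is a martingale, i.e.\ $X$ solves the martingale problem for $L$. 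Since that problem is well posed by hypothesis and condition (3) fixes $X_0=x$, every subsequential limit has the single law $P_x$; hence the full sequence converges weakly to $X$, which is exactly the diffusion solving $dX_t=\sqrt{a(X_t)}\,dB_t+b(X_t)\,dt$.

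The main obstacle is the passage to the limit in this martingale identity. One must simultaneously replace $A^nf$ by $Lf$ and $X^n$ by $X$ inside a time integral, which is delicate because the functional $\omega\mapsto\int_0^t Lf(\omega_s)\,ds$ is only Skorokhod-continuous at paths without fixed discontinuities; this is precisely guaranteed by the almost-sure continuity of the limit established during tightness (and by the fact that the fixed times $s_i,s,t$ are almost surely continuity points of the continuous limit). The uniform integrability needed to pass the limit through the expectation follows from the global bounds on $f$, $Lf$ and the compactness of $[0,1]$, while the $\bigO(1/n)$ discrepancy between the discrete martingale sum and the continuous integral is routine but must be tracked to keep the error uniform.
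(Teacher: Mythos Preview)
The paper does not prove this statement: Theorem~\ref{thm:Durrett} is quoted verbatim from Durrett's textbook (Theorem~8.7.1 and Lemma~8.8.2 of \cite{Dur96}) and is used as a black box, so there is no ``paper's own proof'' to compare against. Your sketch is essentially the classical argument that underlies that reference (and, in a more general form, Ethier--Kurtz or Stroock--Varadhan): expand $A^nf$ by Taylor to get uniform convergence to $Lf$, obtain tightness from the semimartingale decomposition via Aldous--Rebolledo (compact containment being free on $[0,1]$), use condition~(2) to force C-tightness, and identify any limit point as a solution of the well-posed martingale problem. The steps are correct and the subtleties you flag---continuity of the limit needed for the functional convergence of $\int_0^t Lf(X^n_s)\,ds$, uniform integrability from boundedness on $[0,1]$, and the $\bigO(1/n)$ discrepancy between the discrete sum and the time integral---are exactly the ones that require care. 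In short, your proposal supplies a valid proof where the paper simply cites one; nothing needs to be changed beyond noting that in the context of this paper the result is imported rather than proved.
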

\begin{remark}
  The original formulation is $d$-dimensional and considers
  diffusions on the whole space, therefore it includes additional details
   that will not be needed here. 
\end{remark}
Using this result, Theorem~\ref{thm:diffusion} will 
follow once we prove that the martingale problem 
is well posed and we show the following estimates.

\begin{proposition}[Infinitesimal mean and variance]
  \label{prop:infMeanInfVar}
  The following estimates hold:
  \begin{align}
    \label{eq:infVar}
    a_n(x) &=  \frac{x(1-x)}{v_s(x)} + \bigO(1/\sqrt{n}),\\
    \label{eq:infMean}
    b_n(x) &= x(1-x) \PAR{\beta -  \frac{v'_s(x)}{v_s^2(x)}} + \bigO(1/\sqrt{n}),\\
    \label{eq:noJumps}
    c_n(x) &= \bigO(1/\sqrt{n}),
  \end{align}
  where the ``$\bigO$'' holds:
  \begin{itemize}
    \item uniformly on $\stateSpace\cap [0,x_0]$, for all $x_0<1$, if $s\geq 1$, 
    \item uniformly on the entire space $\stateSpace$, if $s<1$. 
  \end{itemize}
\end{proposition}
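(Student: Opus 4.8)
The plan is to compute the infinitesimal mean, variance and third moment of the chain by conditioning on the outcome of the first step and reducing everything to the law of the white proportion among the reproducing males. Fix $x\in\stateSpace$ and write $w=nx$, $b=n(1-x)$, $f=f_n=\floor{sn}$ and $N=n+f_n$; then $(w,b,f)\in\Omega_N$ with $w+b+f=N$, and $(w/N,b/N,f/N)=(\tfrac x{1+s},\tfrac{1-x}{1+s},\tfrac s{1+s})+\bigO(1/n)$. Since $f_n\leq sn$ one has $f/N\leq s(w+b)/N$ exactly, and if moreover $x\geq(1+s)/2$ then $w/N-f/N\geq(x-s)/(1+s)\geq(1-s)/(2+2s)$, so $(w,b,f)\in\Omega_N(s)$; on the other hand $b/N\geq(1-x)/(1+s)$ is bounded below by a positive constant whenever $x\leq(1+s)/2$ (for $s<1$), or whenever $x\leq x_0<1$ (for any $s$), so $(w,b,f)\in\Omega_N(y_0)$ for a suitable $y_0$. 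Hence, in the ranges of the statement, Theorems~\ref{thm=approximation} and~\ref{thm:propertiesOfU}, Proposition~\ref{prop:qTilde} and Lemma~\ref{lem:propVs} all apply uniformly, with a constant depending only on $s$ if $s<1$ and only on $s$ and $x_0$ if $s\geq 1$; all $\bigO$'s below are uniform in this sense, and on the relevant set $v_s$ is smooth with $v_s$ bounded below and $v_s'$ bounded.

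Conditionally on $\mathcal F_1:=\sigma(\tXn_1,\tYn_1)$, the integer $nX^n_1$ is binomial with parameters $n$ and $Z:=\tZ^n_{1,\beta}$ from~\eqref{eq:defZTildeGeneral}. The law of total expectation and variance then give $b^n(x)=n(\espX Z-x)$ and $a^n(x)=\espX{Z(1-Z)}+n\varX Z$, and, using $(p+q)^3\leq4(p^3+q^3)$, $c^n(x)\leq 4n\,\espX{\abs{X^n_1-Z}^3}+4n\,\espX{\abs{Z-x}^3}$, in which the first summand is $\bigO(n^{-1/2})$ by the standard bound on the third absolute central moment of a binomial. Introduce $S=\tXn_1+\tYn_1$, the total number of reproductions (with $S\geq 1$ as soon as $f_n\geq1$), and $P=\tXn_1/S\in[0,1]$; substituting $\tYn_1=S-\tXn_1$ in~\eqref{eq:defZTildeGeneral} yields the deterministic expansion $Z=P+\tfrac\beta n P(1-P)+\bigO(n^{-2})$, so that
\[ b^n(x)=n(\espX P-x)+\beta\,\espX{P(1-P)}+\bigO(1/n),\qquad a^n(x)=\espX{P(1-P)}+n\varX P+\bigO(1/n), \]
and $\espX{\abs{Z-x}^3}=\bigO(\espX{\abs{P-x}^3})+\bigO(n^{-3})$. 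It remains to control the first two moments and the third absolute moment of $P$ about $x$.

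For this I would use the $\delta$-method around $(\mu_X,\mu_Y):=(\espX{\tXn_1},\espX{\tYn_1})=(w\,p_w(w,b,f),\,b\,p_b(w,b,f))$ together with the fact that $\tXn_1$ and $\tYn_1$ are sums of negatively related indicators (Section~\ref{sec:singleBasic}): by the deviation bound~\eqref{eq:concentration} applied to each, the event $\{S\geq\tfrac12 nv_s(x)\}$ has exponentially small complement, and on it $P-p_0=\bigl((1-p_0)(\tXn_1-\mu_X)-p_0(\tYn_1-\mu_Y)\bigr)/S$ with $p_0=\mu_X/(\mu_X+\mu_Y)$ and $S\geq cn$, so the second-order Taylor expansion of $P=\tXn_1/S$ about $(\mu_X,\mu_Y)$ is legitimate and its remainders are controlled by the third-moment bound~\eqref{eq:momentOrdreTrois}. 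This produces $\espX P=p_0+\bigO(1/n)$, $\varX P=\bigO(1/n)$ and $\espX{\abs{P-x}^3}=\bigO(n^{-3/2})$ --- in particular $c^n(x)=\bigO(n^{-1/2})$ --- with the $\bigO(1/n)$-corrections explicit in terms of $\varX{\tXn_1}$, $\varX{\tYn_1}$ and $\covX{\tXn_1,\tYn_1}$. The leading term is $p_0-x=\frac{x(1-x)(p_w-p_b)}{xp_w+(1-x)p_b}$: since $p_w=p_b=v_s(x)+\bigO(1/n)$ and, more sharply, $p_b-p_w=\tfrac1N(\partial_xv-\partial_yv)^N+\bigO(1/N^2)$ by~\eqref{eq=cvg_differences}, and since $(\partial_xv-\partial_yv)(\tfrac x{1+s},\tfrac{1-x}{1+s},\tfrac s{1+s})=(1+s)v_s'(x)$ by the chain rule in~\eqref{eq=defVs}, one gets $n(p_0-x)\to -x(1-x)v_s'(x)/v_s(x)$; this already yields $\espX{P(1-P)}\to x(1-x)$, hence the term $\beta x(1-x)$ of $b^n$ and the summand $x(1-x)$ of $a^n$. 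For the remaining $\bigO(1/n)$-corrections I would express the second moments through the colouring identity extending~\eqref{eq=pw_pb_and_q}, for instance $\covX{B_i,B_{i'}}=\tq(w-2,b,f)-q(w-1,b,f)^2$ for two white indicators and analogously in the other cases, feed in the asymptotics of $q$ and $\tq$, and simplify using $\mu_X+\mu_Y=nv_s(x)+\bigO(1)$, $\mu_X/(\mu_X+\mu_Y)=x+\bigO(1/n)$ and the relation $xv_s(x)-(1-x)\log(1-v_s(x))=s$ defining $v_s$ (cf.\ Remark~\ref{rem:jay}). This should give $n\varX P\to x(1-x)u_s(x)/v_s(x)$ and $n(\espX P-p_0)\to -x(1-x)v_s'(x)u_s(x)/v_s^2(x)$, where $u_s=1-v_s$; adding the two contributions to the mean and using $v_s+u_s=1$ gives $n(\espX P-x)\to -x(1-x)v_s'(x)/v_s^2(x)$. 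Collecting terms then yields~\eqref{eq:infVar}, \eqref{eq:infMean} and~\eqref{eq:noJumps} with error $\bigO(n^{-1/2})$.

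The main obstacle is this last computation: $\varX{\tXn_1}$, $\varX{\tYn_1}$ and $\covX{\tXn_1,\tYn_1}$ are of size $\Theta(n)$, whereas their pairwise-covariance building blocks such as $\tq(w-2,b,f)-q(w-1,b,f)^2$ are only $\bigO(1/N)$; so identifying their leading coefficients requires knowing $q$ and $\tq$ to \emph{relative} precision $o(1)$ on their first differences --- finer than the $\bigO(1/N)$ accuracy Proposition~\ref{prop:qTilde} provides for $\tq$ itself. This forces one to rerun the ``reverse numerical analysis'' of Section~\ref{sec:singleLimit} for the recurrences governing the covariance functions (equivalently $\tq-q^2$), which is the content of Section~\ref{sec:aboutXTilde}. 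By contrast, the $\delta$-method bookkeeping and the exponential localisation of $S$ via Theorem~\ref{thm:concentration} are routine, provided one keeps careful track of the orders in order to reach the announced $\bigO(n^{-1/2})$.
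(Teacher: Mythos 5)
Your proposal is correct and follows essentially the same route as the paper: conditioning on the first step, a second-order Taylor (delta-method) expansion of the ratio $\tX/(\tX+\tY)$ about its mean with the remainder controlled via the negative-relation concentration bounds on a high-probability localisation event, and the reduction of the required covariance combinations to the asymptotics of $q$ and $\tq$ (the paper's Proposition~\ref{prop:momentsTilde} and Lemma~\ref{lmm:asymptotique_pww}), including the crucial cancellation between $p_{ww}-p_{wb}$ and $p_{bb}-p_{wb}$ that you correctly flag as the main obstacle. The only cosmetic differences are that you fold the $\beta$-correction into the expansion from the start, whereas the paper treats $\beta=0$ first and perturbs at the end, and that your localisation event is stated on $S=\tX+\tY$ rather than on $\tX$ or $\tY$ separately.
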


\paragraph*{Outline of the section.}
The remainder of this section is organized as follows. We verify in
Section~\ref{sec:well_posed} that the martingale problem is well posed. In
Section~\ref{sec:aboutXTilde}
we use the convergence results of the single generation model to study the
``first step'' and get information on the asymptotics of the random number of
reproductions. Focusing first on the case where $\beta = 0$ (i.e.\ there
is no ``direct'' fitness advantage), 
we  prove the formula~\eqref{eq:infMean} for the infinitesimal mean
in Section~\ref{sec:infMean}, postponing an estimate of a remainder term to
Section~\ref{sec:remainder}. 
The infinitesimal variance formula~\eqref{eq:infVar} and the
control~\eqref{eq:noJumps} on the higher moments of the jumps
are proved in sections~\ref{sec:infVariance}
and~\ref{sec:tightness}, still in the case $\beta = 0$. 
Finally we show in Section~\ref{sec:betaNonZero} how to 
recover all these results in the case where $\beta$ is arbitrary.

\subsection{The martingale problem is well posed}
\label{sec:well_posed}
Recall the definition \eqref{eq=defVs} of the function $v_s$: 
 \begin{equation*}
   \forall x\in[0,1), \quad v_s(x) = v\left(\frac{x}{1+s},\frac{1-x}{1+s},\frac{s}{1+s}\right) = 1 - \exp\left(-T\left(\frac{x}{1+s},\frac{1-x}{1+s},\frac{s}{1+s}\right)\right)
 \end{equation*}
 where $T\left(\frac{x}{1+s},\frac{1-x}{1+s},\frac{s}{1+s}\right)$ satisfies
 \begin{equation}
 \label{eq:rappelDefT}
 x \left(1 - \exp\left(-T\left(\frac{x}{1+s},\frac{1-x}{1+s},\frac{s}{1+s}\right)\right)\right) + (1 - x) T\left(\frac{x}{1+s},\frac{1-x}{1+s},\frac{s}{1+s}\right)= s
 \end{equation}
This function is extended by continuity at point $x=1$ by $v_{s}(1)=\min(s,1)$. This function behaves nicely, at least if $s<1$.
\begin{lmm}
  [Properties of $v_s$]
  \label{lem:propVs}\ 

  \begin{itemize}
  \item For all $s\in \xR_{+}$, for all $ x\in [0,1]$, $1-e^{-s} \leq v_{s}(x)\leq \min(s,1)$,
  \item For all $s\in \xR_{+}$, for all $ x\in [0,1)$, $v_{s}(x) < \min(s,1)$ and $v_{s}'(x) >0$,
  \item For all $s <1$, for all $ x\in [0,1]$, $(1-s)(e^{-s} + s-1)\leq v_{s}'(x) \leq\frac{e^{-s}(-s - \log(1-s))}{(1-s)}$, 
  \item For all $s <1$, for all $ x\in [0,1]$, $s(1-s)^2(e^{-s} + s-1)\leq v_{s}''(x) \leq\frac{2 s e^{-s}(-s - \log(1-s))}{(1-s)^3}$.
  \end{itemize}
\end{lmm}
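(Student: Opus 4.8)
The plan is to reduce the lemma to a one-variable implicit equation and then read all four statements off explicit formulas for $v_s$, $v_s'$ and $v_s''$. Assume $s>0$ throughout (for $s=0$ one has $v_s\equiv 0$, and the strict assertions of the second bullet are understood only for $s>0$). Writing $\tau(x)=T\bigl(\tfrac{x}{1+s},\tfrac{1-x}{1+s},\tfrac{s}{1+s}\bigr)$, equation~\eqref{eq:rappelDefT} says that $\tau(x)$ is the unique positive root of $x(1-e^{-\tau})+(1-x)\tau=s$ and that $v_s(x)=1-e^{-\tau(x)}$. Setting $v=v_s(x)\in[0,1)$ and substituting $\tau=-\log(1-v)$ turns this into $G(x,v)=0$ with $G(x,v)=xv-(1-x)\log(1-v)-s$. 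For fixed $x\in[0,1)$ the map $v\mapsto g_x(v):=xv-(1-x)\log(1-v)$ is smooth, strictly increasing on $[0,1)$ (its derivative is $x+\tfrac{1-x}{1-v}>0$) and runs from $g_x(0)=0$ to $+\infty$, so $v_s(x)$ is the unique $v\in(0,1)$ with $g_x(v)=s$; since $\partial_vG>0$ the implicit function theorem makes $v_s$ smooth on $[0,1)$, with
\[
  v_s'(x)=-\frac{\partial_xG}{\partial_vG}=\frac{(1-v)\,\psi(v)}{1-xv},\qquad \psi(v):=-v-\log(1-v),\quad v=v_s(x).
\]
Because $\psi(v)>0$ on $(0,1)$ and $1-xv>0$, this already gives $v_s'>0$ on $[0,1)$; and when $s<1$ the bound $v_s(x)\le s$ below keeps $1-xv\ge 1-s>0$, so $v_s$, $v_s'$, $v_s''$ extend continuously to $x=1$. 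It therefore suffices to prove the third and fourth bullets on $[0,1)$, the first two involving only $x\in[0,1)$ together with the definition $v_s(1)=\min(s,1)$.

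For the first two bullets, monotonicity of $g_x$ gives $v_s(x)\ge a\iff g_x(a)\le s$ and $v_s(x)\le b\iff g_x(b)\ge s$. Taking $a=1-e^{-s}$ one has $g_x(1-e^{-s})=x(1-e^{-s})+(1-x)s=s-x(s-1+e^{-s})\le s$ since $e^{-s}\ge 1-s$, so $v_s(x)\ge 1-e^{-s}$ (and $\min(s,1)\ge 1-e^{-s}$ covers $x=1$). For $s<1$, taking $b=s$ gives $g_x(s)=xs-(1-x)\log(1-s)\ge xs+(1-x)s=s$ since $-\log(1-s)\ge s$, hence $v_s(x)\le s$, and as $v_s(x)<1$ always we get $v_s(x)\le\min(s,1)$. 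For $x<1$ and $s\in(0,1)$ the inequality $-\log(1-s)>s$ is strict, so $v_s(x)<s=\min(s,1)$; for $s\ge 1$, trivially $v_s(x)<1=\min(s,1)$. Now fix $s<1$: from the above $v:=v_s(x)\in[1-e^{-s},s]$, so $1-v\in[1-s,e^{-s}]$ and, $\psi$ being increasing, $\psi(v)\in[s-1+e^{-s},-s-\log(1-s)]$; inserting these and $1-v\le 1-xv\le 1$ into the formula for $v_s'$ yields at once $(1-s)(e^{-s}+s-1)\le v_s'(x)\le\dfrac{e^{-s}(-s-\log(1-s))}{1-s}$, which is the third bullet.

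The fourth bullet is the hard part. Differentiating the formula for $v_s'$ and substituting it back to eliminate $v'=v_s'$ gives, after simplification,
\[
  v_s''(x)=\frac{(1-v)\,\psi(v)}{(1-xv)^3}\Bigl[\,2v(1-xv)-(1-x)\psi(v)\,\Bigr],\qquad v=v_s(x),
\]
and the bracket simplifies thanks to the identity $(1-x)\psi(v)=s-v$: solving $G(x,v)=0$ for $x$ gives $x=\tfrac{s+\log(1-v)}{v+\log(1-v)}$, hence $1-x=\tfrac{s-v}{\psi(v)}$. So the bracket equals $2v(1-xv)-(s-v)$. For the upper bound, drop the nonnegative term $s-v$ (recall $v\le s$) and use $1-xv\le 1$, $v\le s$, $1-v\le e^{-s}$, $\psi(v)\le -s-\log(1-s)$, $1-xv\ge 1-v\ge 1-s$ to get $v_s''(x)\le\dfrac{2se^{-s}(-s-\log(1-s))}{(1-s)^3}$. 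For the lower bound, use $1-xv\ge 1-v$ to bound the bracket below by $2v(1-v)-(s-v)=3v-2v^2-s$; being concave in $v$, this is minimal on $[1-e^{-s},s]$ at an endpoint, where $3s-2s^2-s=2s(1-s)\ge s(1-s)$ and, at $v=1-e^{-s}$, $3(1-e^{-s})-2(1-e^{-s})^2-s\ge s(1-s)$ --- the latter being equivalent to $(1-s)^2+e^{-s}-2e^{-2s}\ge 0$, i.e.\ $e^{-s}(2e^{-s}-1)\le(1-s)^2$, which follows from $e^{-s}\le 1-s+\tfrac{s^2}{2}$ and $e^{-s}\le 1$. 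Hence the bracket is $\ge s(1-s)>0$ (so in particular $v_s''>0$), and with $(1-v)\psi(v)\ge(1-s)(e^{-s}+s-1)$ and $(1-xv)^{-3}\ge 1$ we obtain $v_s''(x)\ge s(1-s)^2(e^{-s}+s-1)$. I expect the only genuine obstacle to be this last step: spotting the identity $(1-x)\psi(v)=s-v$, and then assembling the chain of elementary inequalities --- in particular the positivity of the bracket, which is what yields $v_s''>0$ --- with exactly the stated constants; everything else is routine implicit differentiation and monotonicity of scalar functions.
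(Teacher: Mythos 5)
Your proof is correct and follows essentially the same route as the paper: the implicit relation $xv-(1-x)\log(1-v)=s$, the same closed form for $v_s'$, and a reduction of the $v_s''$ bounds to the sign of $3v-2xv^2-s$ (your bracket $2v(1-xv)-(s-v)$ is algebraically identical to the paper's $v''/v'$ numerator), handled by concavity in $v$ and evaluation at the endpoints $v=1-e^{-s}$ and $v=s$. If anything you are slightly more complete, since you actually justify the endpoint inequality $(1-s)^2+e^{-s}-2e^{-2s}\ge 0$ via $e^{-s}\le 1-s+s^2/2$, which the paper merely asserts.
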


\begin{proof}
From now and only in this proof, we write $v$ (resp. $T$) instead of $v_s(x)$ (resp. $T\left(x/1+s,1-x/1+s,s/1+s\right)$)  for convenience. Since $1-e^{-t} \leq t$ for all $t\in\xR_{+}$, 
\[
  x(1-e^{-T})+(1-x)T \leq T
\]
which implies $s\leq T$ by the definition~\eqref{eq:rappelDefT} of $T$. 
Then we have
\[
  xs+(1-x)T \geq xs+(1-x)s = s = xv+(1-x)T,
\]
which implies $v\leq s$ for $x>0$. Since $T(0,1/1+s,s/1+s)=s$, we have
$v_{s}(0) = 1-e^{-s} \leq s$. Moreover $s\leq T$ obviously implies
$1-e^{-s} \leq1-e^{-T}=v$, proving the first point. Finally if $x<1$ then $T>0$ thus
$s < T$ and $v_{s}(x)<s$ for all $x\in [0,1)$.

Rewriting \eqref{eq:rappelDefT} in terms of $v$ yields the 
relation
\begin{equation}
  \label{eq:bienPratique}
  xv-(1-x)\log(1-v) = s.
\end{equation}
Differentiating this formula and isolating $v'$ one gets
\( v' = \frac{ (- v - \log(1 - v))(1-v)}{1-xv}\); using \eqref{eq:bienPratique}
to get rid of the logarithm yields
\begin{equation}
  \label{eq:vprime}
  v'   = \frac{(1-v)}{1-xv}\cdot \frac{s-v}{1-x},
\end{equation}
proving $v_{s}'(x)>0$ for all $x\in[0,1)$.


Since $t\mapsto -t-\log(1-t)$ is nondecreasing on $[0,1]$, we obtain for $0<s<1$
\[
  \frac{(1-s)(e^{-s} + s-1)}{(1-x+xe^{-s})}\leq v_{s}'(x) 
  \leq \frac{e^{-s}(-s - \log(1-s))}{(1-xs)}
\]
which gives the following bounds (uniformly with respect to $x$):
\[
  (1-s)(e^{-s} + s-1) \leq v_{s}'(x) \leq \frac{e^{-s}(-s - \log(1-s))}{(1-s)}
\]
Let us now turn to   the second derivative. 
Take the logarithm of~\eqref{eq:vprime} and differentiate:
\[
  [\log(v')]' = \frac{v''}{v'} 
  = \frac{-v'}{1-v}+\frac{-v'}{s-v}+\frac{v}{1-xv} + \frac{xv'}{1-xv}+\frac{1}{1-x}.
\]
Using the expression~\eqref{eq:vprime} for $v'$, it is easy to see that the sum
of the first and fourth terms is $(-(s-v))/(1-xv)^2$ and the sum of the second
and fifth terms is $v/(1-xv)$. Therefore the whole sum is simply 
\[
  \frac{v''}{v'} 
  = - \frac{s-v}{(1-xv)^2} + \frac{v}{1 - xv} + \frac{v}{1-xv}
  = \frac{-2xv^2 + 3v -s}{(1-xv)^2}.
\]
From this expression and the bounds on $v'$,  the upper bound
on $v''$ is easily obtained. Moreover, since
\[
  3v-2xv^2-s -s(1-s) \geq -2(v-3/4)^2+(s-1)^2+1/8
\]
the lower bound $s(1-s)$  on $v''/v'$ will follow if we show that
$-2(v-3/4)^2+(s-1)^2+1/8$ is positive. This quantity is minimal if
 $|v-3/4|$ is maximal. Since $v$ is nondecreasing, the maximal value
of $|v-3/4|$ is attained at $x=0$ or $x=1$, for which $v=1-e^{-s}$ or
$v=s$. When $v=1-e^{-s}$, we have $3v-2v^2-s -s(1-s) =
3-3e^{-s}-2(1-e^{-s})^2-s-s(1-s)$ which is positive for all $s<1$.
When $v=s$, we have $3v-2v^2-s-s(1-s) = s(1-s)\geq0 $. This concludes the proof 
of the lower bound for $v''$. 
\end{proof}

Now we are able to prove that  the martingale problem is well posed by proving
pathwise uniqueness thanks to the following theorem of Yamada and Watanabe, 
as stated in~\cite[Theorem 5.3.3]{Dur96}. 
\begin{thrm}[Yamada-Watanabe]
Let \( dX_t = \sqrt{a(x)} dB_t + b(X_t)dt\) be a SDE such that
\begin{enumerate}[label={(\roman*)}]
\item there exists a positive increasing function $\rho$ on $(0,+\infty)$ such that
\[
\left|\sqrt{a(x)} -\sqrt{a(y)}\right| \leq \rho(|x-y|), \qquad \text{ for all } x,y \in \xR
\]
and
\(
\int_{]0,1[} \rho ^{-2}(u) du = +\infty.
\)
\item there exists a positive increasing concave function $\kappa$ on $(0,+\infty)$ such that
\[
|b(x)-b(y) | \leq \kappa(|x-y|), \qquad \text{ for all } x,y \in \xR
\]
and
\(
\int_{]0,1[}\kappa^{-1}(u) du = +\infty.
\)
\end{enumerate}
Then pathwise uniqueness holds for the SDE.
\end{thrm}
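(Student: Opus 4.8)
The plan is to establish pathwise uniqueness by the classical Yamada--Watanabe comparison argument, which is what underlies the cited \cite{Dur96}. Let $X$ and $Y$ be two solutions of the SDE on a common filtered probability space, driven by the same Brownian motion $B$, with $X_0 = Y_0$ almost surely; the goal is to show $\prb{\sup_{t\geq 0}\abs{X_t - Y_t} > 0} = 0$. First I would build, using hypothesis~(i), a family $(\phi_n)$ of smooth approximations of $x\mapsto\abs{x}$ whose second derivative is concentrated near $0$. Since $\int_{]0,1[}\rho^{-2}(u)\,du = +\infty$ (which forces $\rho(0^+)=0$), one can choose a strictly decreasing sequence $1 = a_0 > a_1 > a_2 > \cdots \downarrow 0$ with $\int_{a_n}^{a_{n-1}}\rho^{-2}(u)\,du = n$, and continuous functions $\psi_n$ supported in $(a_n,a_{n-1})$ with $0\leq \psi_n\leq \tfrac{2}{n\rho^2}$ and $\int\psi_n = 1$. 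Setting $\phi_n(x) = \int_0^{\abs{x}}\int_0^{y}\psi_n(u)\,du\,dy$ one obtains an even $C^2$ function with $\abs{x} - a_{n-1}\leq \phi_n(x)\leq \abs{x}$ (so $\phi_n\to\abs{\cdot}$ pointwise), $\abs{\phi_n'}\leq 1$, $\phi_n'(0)=0$, and $\phi_n''(x) = \psi_n(\abs{x})\leq \tfrac{2}{n\rho^2(\abs{x})}$.

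Next I would apply Itô's formula to $\phi_n(X_t - Y_t)$, using $d(X_t - Y_t) = (\sqrt{a(X_t)} - \sqrt{a(Y_t)})\,dB_t + (b(X_t) - b(Y_t))\,dt$, and stop at $\tau_k = \inf\{t : \abs{X_t}\vee\abs{Y_t}\geq k\}$, so that the stochastic integral is a true martingale (its integrand being bounded on $[0,t\wedge\tau_k]$) and vanishes in expectation:
\begin{align*}
  \esp{\phi_n(X_{t\wedge\tau_k} - Y_{t\wedge\tau_k})}
  &= \esp{\int_0^{t\wedge\tau_k}\phi_n'(X_s - Y_s)(b(X_s) - b(Y_s))\,ds} \\
  &\quad + \frac12\esp{\int_0^{t\wedge\tau_k}\phi_n''(X_s - Y_s)(\sqrt{a(X_s)} - \sqrt{a(Y_s)})^2\,ds}.
\end{align*}
By the construction of $\psi_n$ and hypothesis~(i), the integrand of the last term is at most $\tfrac{2}{n\rho^2(\abs{X_s-Y_s})}\rho^2(\abs{X_s-Y_s}) = \tfrac2n$, so that term is $\leq t/n$. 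For the drift term, $\abs{\phi_n'}\leq 1$ and hypothesis~(ii) give the pointwise bound $\abs{b(X_s)-b(Y_s)}\leq\kappa(\abs{X_s-Y_s})$, and since $\kappa$ is concave, Jensen's inequality gives $\esp{\kappa(\abs{X_{s\wedge\tau_k}-Y_{s\wedge\tau_k}})}\leq\kappa(g_k(s))$, where $g_k(s):=\esp{\abs{X_{s\wedge\tau_k}-Y_{s\wedge\tau_k}}}$ is bounded by $2k$ and continuous, with $g_k(0)=0$.

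Letting $n\to\infty$ (bounded convergence, as $0\leq\phi_n\leq\abs{\cdot}\leq 2k$ on $[0,t\wedge\tau_k]$) removes the $t/n$ term and leaves $g_k(t)\leq \int_0^t \kappa(g_k(s))\,ds$. To conclude I would run the Osgood--Bihari argument: put $G(t) = \int_0^t\kappa(g_k(s))\,ds$, so $G\in C^1$, $G(0)=0$, $g_k\leq G$, and $G' = \kappa(g_k)\leq\kappa(G)$ because $\kappa$ is increasing. If $G(t_1)>0$ for some $t_1$, then integrating $G'/\kappa(G)\leq 1$ over $(t_0,t_1)$ with $t_0$ just above the last zero of $G$ before $t_1$ gives, after the substitution $u = G(s)$, that $\int_{G(t_0)}^{G(t_1)}du/\kappa(u)\leq t_1$; letting $t_0$ decrease to that zero makes the left-hand side diverge by hypothesis~(ii), a contradiction. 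Hence $G\equiv 0$, so $g_k\equiv 0$, i.e.\ $X_{\cdot\wedge\tau_k} = Y_{\cdot\wedge\tau_k}$ almost surely; since $\tau_k\to\infty$, this yields $X = Y$ almost surely, which is pathwise uniqueness.

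The main obstacle is the first step: the partition $(a_n)$ and the densities $\psi_n$ must be chosen precisely so that $\phi_n''(X_s-Y_s)(\sqrt{a(X_s)}-\sqrt{a(Y_s)})^2\leq 2/n$ holds \emph{pointwise} — this is exactly where the divergence of $\int_{0^+}\rho^{-2}$ in hypothesis~(i) is exploited, and it is what makes the otherwise problematic second-order term negligible in the limit. The remaining ingredients — the localization by $\tau_k$, the concavity of $\kappa$ in the Jensen step, and the Osgood argument built on hypothesis~(ii) — are routine.
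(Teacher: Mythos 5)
Your argument is correct, but note that the paper offers no proof of this statement at all: the theorem is imported verbatim from \cite{Dur96} (Theorem~5.3.3 there) and used as a black box, the paper's only work being to verify its hypotheses for the specific coefficients of Theorem~\ref{thm:diffusion} via the bounds of Lemma~\ref{lem:propVs} (yielding $\rho(u)=2C\sqrt{u}$ and $\kappa(u)=(\beta+2C)u$). What you have reconstructed is the classical Yamada--Watanabe argument that underlies the citation, and it is complete and correctly executed: the $C^2$ approximations $\phi_n$ of $\abs{\cdot}$ built from the divergence of $\int_{0^+}\rho^{-2}$, the localization by $\tau_k$ that makes the martingale part vanish in expectation, the pointwise bound on $\phi_n''(X_s-Y_s)\bigl(\sqrt{a(X_s)}-\sqrt{a(Y_s)}\bigr)^2$ coming from $\phi_n''\leq 2/(n\rho^2)$, the Jensen step exploiting the concavity of $\kappa$, and the Osgood--Bihari iteration are all in order. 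Two minor remarks. First, the It\^o factor $1/2$ combined with your bound gives $t/n$ rather than $2t/n$ for the second-order term; immaterial. Second, for the Osgood step you implicitly use that $\kappa$ extends continuously to $0$ with $\kappa(0^+)=0$ (forced by the divergence of $\int_{0^+}\kappa^{-1}$), so that $G$ is genuinely $C^1$ and the substitution $u=G(s)$ is legitimate; it would be worth saying so explicitly. Finally, as applied in the paper the hypotheses are only verified for $x,y$ in the state space $[0,1]$ rather than all of $\xR$; this is harmless because the diffusion never leaves $[0,1]$ (or because the coefficients extend), but a careful write-up should record that the localization argument only ever evaluates $a$ and $b$ there.
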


For $\sqrt{a}$, thanks to the previous bounds in Lemma~\ref{lem:propVs} and the 
elementary inequality $\abs{\sqrt{c} - \sqrt{d}} \leq \sqrt{\abs{c-d}}$, valid for all
$(c,d)\in\xR_+^2$, 
there exists a constant $C$ depending only on $s$ such that 
\begin{align*}
\left|\sqrt{a(x)}-\sqrt{a(y)} \right| &\leq \left|\sqrt{\frac{x(1-x)}{v_{s}(x)}} - \sqrt{\frac{y(1-y)}{v_{s}(x)}} \right| + \left|\sqrt{\frac{y(1-y)}{v_{s}(x)}} - \sqrt{\frac{y(1-y)}{v_{s}(y)}} \right|\\
&\leq \frac{\left|\sqrt{x(1-x)} - \sqrt{y(1-y)} \right|}{\sqrt{v_{s}(x)}} + \sqrt{\frac{y(1-y)}{v_{s}(x)v_{s}(y)}}\left|\sqrt{v_{s}(y)} -\sqrt{v_{s}(x)}\right|\\
&\leq \frac{\left|\sqrt{x(1-x)} - \sqrt{y(1-y)} \right|}{\sqrt{v_{s}(x)}} + \sqrt{\frac{y(1-y)}{v_{s}(x)v_{s}(y)}}\sup_{z\in[0,1]}\frac{|v_{s}'(z)|}{2\sqrt{v_{s}(z)}}\left|x-y\right|\\
&\leq C \sqrt{|x-y|}\sqrt{|1-x-y|} + C \left|x-y\right|\leq 2C \sqrt{|x-y|}.
\end{align*}
Thus the first item holds with $\rho(u) = 2C\sqrt{u}$.
 For the drift $b$, we have
\begin{align*}
|b(x)-b(y)| 
&\leq \beta \left|x(1-x)-y(1-y)\right| 
  + \left| x(1-x)\frac{v_{s}'(x)}{v_{s}^{2}(x)} 
                      - y(1-y)\frac{v_{s}'(y)}{v_{s}^{2}(y)} \right| 
\\
&\leq\beta |x-y||1-x-y| + \frac{v_{s}'(x)}{v_{s}^{2}(x)}\left|x(1-x) - y(1-y) \right|
 + y(1-y)\left|
             \frac{v_{s}'(x)}{v_{s}^{2}(x)} - \frac{v_{s}'(y)}{v_{s}^{2}(y)}
	   \right| 
\\
&\leq (\beta+C) |x-y| 
  + y(1-y) \sup_{z\in[0,1]}\frac{
      |v''_{s}(z)v_{s}^{2}(z)-2v_{s}'(z)v_{s}(z)v_{s}'(z)|
    }{
      v_{s}^{4}(z)}
  |x-y| \\
&\leq(\beta+2C) |x-y|,
\end{align*}
which proves the second item by setting $\kappa(u) = (\beta+2C)u$.

\subsection{The number of reproductions}  
\label{sec:aboutXTilde}
In this section we use the results from the previous section to study the
``first step''  of each generation, 
getting information on the asymptotics of the random number of
reproductions. 
\paragraph{Notation.} From now on, we only need to study what happens
in one step of the Markov chain. 
We let  $x=X^n_0 = w/n \in \stateSpace$ be the initial proportion of white balls. 
There are $ b$ black balls, where $b+w = n$ and we draw $f = s n$ times. 
Note that $N = w+b+f = (1+s)n$, and $s$ is fixed, so $n$ and $N$ are of the 
same order. 
We omit the ``size'' index $n$ and the time index $k=1$,  denoting by $(\tX,\tY) =
(\tX^n_1, \tY^n_1)$ the number of white/black ``reproductions'' and by $X = X^n_1$
the proportion of white balls after the first step. 
Moreover we let
\[ \tx = \espX{\tX}, \qquad \ty = \espX{\tY}.\]
The goal of this section is to prove the following estimates. 
 \begin{proposition}[Moments of $(\tX,\tY)$]
   \label{prop:momentsTilde}
   The moments of $(\tX,\tY)$ have the following asymptotic behaviour:
   \begin{align*}
     \tx = \espX{\tX} &=  nxv_s(x) + \bigO(1),    &
     \ty = \espX{\tY} &=  n(1-x)v_s(x) + \bigO(1),\\
     \varX{\tX}       &=  \bigO(n), &
     \varX{\tY}       &= \bigO(n),\\
   \covX{\tX,\tY}     &= \bigO(n), & 
   \end{align*}
   Moreover, 
   \begin{align*}
   -b\varX{\tX} + (w-b)\covX{\tX,\tY} + w\varX{\tY} &= 
   wb\left(v'_s(x) (v_s(x)-1)  + \bigO(1/n)\right) \\
   b^2 \varX{\tX}  - 2wb\covX{\tX,\tY} + w^2 \varX{\tY} &= 
   wbn\left( v_s(x)(1 - v_s(x)) + \bigO(1/n)\right).
 \end{align*}
 In all these results the ``$\bigO$'' are uniform  on 
 the starting point $x\in[0,1 - y_0]$ (if $s\geq 1$), 
 and uniform on $x\in[0,1]$ (if $s<1$). 
 Finally, 
 \begin{align*}
   \espX{\abs{\tX - \esp{\tX}}^3} & = \bigO(n^{3/2}) \\
   \espX{\abs{\tY - \esp{\tY}}^3} & = \bigO(n^{3/2}), 
 \end{align*}
 where the $\bigO$ are uniform on $x\in[0,1]$. 
 \end{proposition}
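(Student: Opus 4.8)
Everything is built on writing the two reproduction counts as sums of the indicators $B_i$ of Section~\ref{sec:singleBasic}: labelling the white balls $1,\dots,w$ and the black ones $w+1,\dots,w+b$ (with $w=xn$, $b=(1-x)n$, $f=f_n=\floor{sn}$ and $N=w+b+f$), one has $\tX=\sum_{i\le w}B_i$ and $\tY=\sum_{i>w}B_i$. By colour symmetry $\espX{B_i}=p_w(w,b,f)$ for $i\le w$ and $\espX{B_i}=p_b(w,b,f)$ otherwise, so $\tx=w\,p_w(w,b,f)$ and $\ty=b\,p_b(w,b,f)$; Theorem~\ref{thm=approximation} gives $p_w=v^N(w,b,f)+\bigO(1/N)$ and $v^N(w,b,f)=v_s(x)+\bigO(1/N)$ (the point $(w/N,b/N,f/N)$ lies within $\bigO(1/N)$ of $\PAR{x,1-x,s}/(1+s)$, and $v$ is Lipschitz there by Theorem~\ref{thm:propertiesOfU}), and multiplying by $w,b=\bigO(n)$ yields the stated expansions of $\tx,\ty$. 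The same reduction handles the crude bounds: the negative relation of the $B_i$ makes them pairwise negatively correlated, so $\varX{\tX}\le\sum_{i\le w}\varX{B_i}\le w/4=\bigO(n)$, likewise $\varX{\tY}=\bigO(n)$, and $\abs{\covX{\tX,\tY}}\le\sqrt{\varX{\tX}\varX{\tY}}=\bigO(n)$ by Cauchy--Schwarz; the third-moment bounds are immediate from Theorem~\ref{thm:concentration} applied to the negatively related Bernoulli families $(B_i)_{i\le w}$ and $(B_i)_{i>w}$, which have at most $n$ terms. Uniformity over $x$ is obtained by splitting: for $s<1$ one covers $[0,1]$ by $x\le(1+s)/2$ (where $b$ stays bounded below, so the point is in some $\Omega_N(y_0')$) and $x\ge(1+s)/2$ (where $w-f$ stays bounded below, so the point is in $\Omega_N(s)$); for $s\ge1$ one keeps only the first region, which forces the restriction $x\le1-y_0$.

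The core is the two quadratic combinations. Exactly as in the coupling behind \eqref{eq=pw_pb_and_q}, for two distinct balls the probability that neither is drawn equals the function $\tq$ of Definition~\ref{dfn:qTilde} with those two balls recoloured red; writing $q_w=q(w-1,b,f)$, $q_b=q(w,b-1,f)$, $u_{11}=\tq(w-2,b,f)$, $u_{12}=\tq(w-1,b-1,f)$, $u_{22}=\tq(w,b-2,f)$ and expanding over pairs, one finds
\begin{align*}
  \varX{\tX} &= wq_w + w(w-1)u_{11} - w^2q_w^2, \\
  \varX{\tY} &= bq_b + b(b-1)u_{22} - b^2q_b^2, \\
  \covX{\tX,\tY} &= wb\,(u_{12}-q_wq_b).
\end{align*}
Substituting into $\varX{b\tX-w\tY}=b^2\varX{\tX}-2wb\covX{\tX,\tY}+w^2\varX{\tY}$ and into $\covX{w\tY-b\tX,\tX+\tY}=-b\varX{\tX}+(w-b)\covX{\tX,\tY}+w\varX{\tY}$ and grouping by degree in $(w,b)$, the quartic/cubic part collapses:
\begin{align*}
  \varX{b\tX-w\tY} &= w^2b^2\bigl[(u_{11}-2u_{12}+u_{22})-(q_w-q_b)^2\bigr] + wb\bigl[b(q_w-u_{11})+w(q_b-u_{22})\bigr], \\
  \covX{w\tY-b\tX,\tX+\tY} &= w^2b\,(A_{12}-A_1) + wb^2\,(A_2-A_{12}) + wb\bigl[(u_{11}-q_w)+(q_b-u_{22})\bigr],
\end{align*}
with $A_1=u_{11}-q_w^2$, $A_{12}=u_{12}-q_wq_b$, $A_2=u_{22}-q_b^2$. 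Now the bracketed top-degree quantities are diagonal first/second differences of $\tq$ plus $(q_w-q_b)^2$-type pieces: using $\tu=u^2$, $u=1-v$, Proposition~\ref{prop:qTilde} (whose diagonal-difference estimate we use in the sharper form $(\delta_x-\delta_y)\tq=\tfrac1N((\partial_x-\partial_y)\tu)^N+\bigO(1/N^2)$ that the recurrence method delivers), the relation $p_b-p_w=\tfrac1N(\partial_xv-\partial_yv)^N+\bigO(1/N^2)$ from \eqref{eq=cvg_differences}, and the chain rule $(\partial_xv-\partial_yv)=(1+s)v_s'$, one gets $u_{11}-2u_{12}+u_{22}=\bigO(1/N^2)$, $(q_w-q_b)^2=\bigO(1/N^2)$ and $A_{12}-A_1,A_2-A_{12}=-\tfrac{1+s}{N}(1-v_s)v_s'+\bigO(1/N^2)$; while from $q=u^N+\bigO(1/N)$ and $\tq=(u^N)^2+\bigO(1/N)$ the remaining brackets equal $\pm(1-v_s)v_s+\bigO(1/N)$. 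Feeding these in, with $w+b=n$, $w^2b^2\cdot\bigO(1/N^2)=\bigO(n^2)$ and $n(1+s)/N=1+\bigO(1/n)$, yields $\varX{b\tX-w\tY}=wbn\bigl(v_s(1-v_s)+\bigO(1/n)\bigr)$ and $\covX{w\tY-b\tX,\tX+\tY}=wb\bigl(v_s'(v_s-1)+\bigO(1/n)\bigr)$, which are precisely the two announced identities.

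The main obstacle is the precision budget in the second display: the top-degree coefficients $w^2b^2$ and $w^2b$ are of size $\bigO(n^4)$ and $\bigO(n^3)$, so the quantities they multiply must be controlled to \emph{absolute} order $\bigO(1/n^2)$, far beyond the crude $\bigO(1/N)$ approximations of $q$ and $\tq$. What makes this possible is that those quantities are finite differences (first and second) of $q$ and of $\tq$, for which the ``reverse numerical analysis'' of the single-generation limit provides the extra order — concretely, the first-order expansions of $\delta_xq,\delta_yq$ underlying \eqref{eq=cvg_differences} and the diagonal-difference expansion of $\tq$ in Proposition~\ref{prop:qTilde}, each with a genuine $\bigO(1/N^2)$ remainder. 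Beyond this, the proof is bookkeeping: tracking all lower-degree terms in the expansions above, checking that the ``$-1,-2$'' shifts and the floor $\floor{sn}$ only perturb the relevant points (and the values of $u$, $v$ and their derivatives) by $\bigO(1/N)$, and invoking the domain splitting of the first paragraph to make every ``$\bigO$'' uniform in the required range of $x$.
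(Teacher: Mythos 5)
Your proof is correct and follows essentially the same route as the paper: the expectations come from $\tx = wp_w$, $\ty = bp_b$ together with Theorem~\ref{thm=approximation}, the third moments from Theorem~\ref{thm:concentration} applied to the negatively related indicators, and the two quadratic combinations from expressing the variances and covariance through the pair probabilities ($q$ and $\tq$ in your complementary formulation, $p_{ww},p_{wb},p_{bb}$ in the paper's Lemma~\ref{lmm:asymptotique_pww}) and observing that the top-degree coefficients $w^2b^2$ and $w^2b,\,wb^2$ multiply first/second differences that cancel to $\bigO(1/N^2)$. Your remark that the second estimate of Proposition~\ref{prop:qTilde} must be invoked in the sharper form with an $\bigO(1/N^2)$ remainder (which the recurrence method indeed delivers, and which the paper's statement appears to understate) is precisely the point on which the paper's cancellation ``$p_{bb}-p_{wb}$ and $p_{wb}-p_{ww}$ have the same leading term'' relies.
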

 \begin{remark}
   Getting the exact value of the leading term for the second moments
   does not seem easy; 
   we will only need a control on the  particular linear combinations that
   appear in the second block of equations.
 \end{remark}

We begin with a lemma.  Define $p_{ww}$ to be the probability that two
given different white balls are drawn, and define $p_{wb}$, $p_{bb}$ similarly.
 \begin{lmm}
   \label{lmm:asymptotique_pww}
   For all $y_0$, there exists a $C(y_0)$ such that, if $(w,b,f) \in \Omega_N(y_0)$,
   \begin{align*}
     \abs{(p_{ww} - v^2)(w,b,f)} \leq \frac{C(y_0)}{N}, 
     \abs{(p_{wb} - v^2)(w,b,f)} \leq \frac{C(y_0)}{N}, 
     \abs{(p_{bb} - v^2)(w,b,f)} \leq \frac{C(y_0)}{N}. 
   \end{align*}
   Moreover, the differences are of order $1/N$ and are given by:
   \begin{align*}
     \abs{
     \left(p_{bb} - p_{wb} - \frac{1}{N}\left( (2 v - 1) (\partial_x-\partial_y)v\right)^N\right)(w,b,f)
   } &\leq \frac{C(y_0)}{N^2},  \\
   \abs{
     \left(p_{wb} - p_{ww} - \frac{1}{N}\left( (2 v - 1) (\partial_x-\partial_y)v\right)^N\right)(w,b,f)
   } &\leq \frac{C(y_0)}{N^2}.
   \end{align*}
   The same bounds bold uniformly on $\Omega_N(s)$ if $s<1$. 
 \end{lmm}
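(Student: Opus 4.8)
The plan is to reduce the three two-ball probabilities $p_{ww}$, $p_{wb}$, $p_{bb}$ to the single function $\tq$ of Definition~\ref{dfn:qTilde}, exactly as the proof of Theorem~\ref{thm=approximation} reduced $p_w$ and $p_b$ to $q$. Concretely, pick two distinguished balls and add two red balls ``Roger'' and ``Rita'' to the urn. By the usual colour-blindness argument (the colour of a ball matters only after it is drawn, and a red ball that is drawn stops being relevant), the probability that neither distinguished ball is ever drawn equals $\tq$ evaluated at shifted arguments: marking two white balls red gives $p_{ww}$ related to $1 - (\text{two reds not drawn})$ with urn $(w-2,b,f)$, marking one white and one black gives the $(w-1,b-1,f)$ shift, and marking two black gives $(w,b-2,f)$. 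More carefully, since $p_{\bullet\bullet}$ is the probability that \emph{both} distinguished balls \emph{are} drawn, one has to expand with inclusion--exclusion, $p_{ww} = 1 - 2q(w-1,b,f) + \tq(w-2,b,f)$ and analogous identities for $p_{wb}$ and $p_{bb}$, the ``$2q$'' terms accounting for exactly one of the two being drawn. The key point is that all three are affine combinations of shifts of $q$ and $\tq$ with the same coefficients, so the differences $p_{bb}-p_{wb}$ and $p_{wb}-p_{ww}$ are combinations of first discrete differences of $q$ and $\tq$ in the $(w,b)$-variables.

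Next I would invoke the two approximation results already proved: Theorem~\ref{thm:qConvergeVersU} for $q$ (with $q \approx u^N$ and $N\delta_x q \approx (\partial_x u)^N$, etc.) and Proposition~\ref{prop:qTilde} for $\tq$ (with $\tq \approx (u^N)^2$ and the difference estimate on $\tq(w,b-1,f)-\tq(w-1,b,f)$). Since $v = 1-u$, we have $\tu = u^2 = (1-v)^2$, and the leading term of $p_{\bullet\bullet}$ is $1 - 2u + u^2 = (1-u)^2 = v^2$; this gives the first three $\bigO(1/N)$ bounds immediately. For the differences, one computes $p_{bb}-p_{wb}$: the ``$1$'' cancels, the $-2q$ terms contribute $-2\bigl(q(w,b-1,f)-q(w-1,b,f)\bigr)$, and the $\tq$ terms contribute $\tq(w,b-2,f)-\tq(w-1,b-1,f)$. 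Both of these are, up to shifting the base point by a bounded amount (which costs only $\bigO(1/N^2)$ thanks to the derivative bounds on $u$ in Theorem~\ref{thm:propertiesOfU}, exactly as in Section~\ref{sec:from_q_to_pb}), controlled by the estimates in Theorem~\ref{thm:qConvergeVersU} and Proposition~\ref{prop:qTilde}. Assembling the limits gives $-2 \cdot \frac1N(\partial_y-\partial_x)u + \frac2N\bigl(u(\partial_y-\partial_x)u\bigr)$ up to $\bigO(1/N^2)$, and since $\partial u = -\partial v$ and $u = 1-v$, this equals $\frac1N\bigl(2(1-v)-2\bigr)(\partial_x-\partial_y)v \cdot(-1) $; keeping careful track of signs this collapses to $\frac1N\bigl((2v-1)(\partial_x-\partial_y)v\bigr)^N$, as claimed. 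The identity for $p_{wb}-p_{ww}$ is handled identically by the symmetry between moving a ball out of the black pile versus into the white pile.

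The only real work is bookkeeping: getting the inclusion--exclusion identities for $p_{ww},p_{wb},p_{bb}$ in terms of $q$ and $\tq$ exactly right (in particular the right argument shifts and the factor of $2$), and then checking that the various $\bigO(1/N^2)$ error terms — those coming from Proposition~\ref{prop:qTilde}, from Theorem~\ref{thm:qConvergeVersU}, and from moving the evaluation point by $\pm1/N$ — all genuinely combine to $\bigO(1/N^2)$. I expect the main obstacle to be purely notational: matching the sign conventions so that the combination $1-2u+u^2$ and its differences produce precisely the factor $(2v-1)$ rather than, say, $(1-2v)$ or $(2v-1)/2$; once the identities are written down, the passage to the continuous limit is a line-by-line repetition of Section~\ref{sec:from_q_to_pb}. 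The uniformity on $\Omega_N(y_0)$ and on $\Omega_N(s)$ (when $s<1$) is inherited directly from the corresponding uniformity in Theorem~\ref{thm:qConvergeVersU}, Proposition~\ref{prop:qTilde} and Theorem~\ref{thm:propertiesOfU}, after shrinking $y_0$ slightly (to $y_0'<y_0$) so that the shifted arguments $(w-1,b-1,f)$ etc.\ remain in the relevant domain, exactly as in the proof of~\eqref{eq=cvg_differences}.
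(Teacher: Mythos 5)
Your approach is exactly the paper's: identify the ``neither drawn'' probabilities with $\tq$ via colour-blindness ($q_{ww}(w,b,f)=\tq(w-2,b,f)$, $q_{wb}(w,b,f)=\tq(w-1,b-1,f)$, $q_{bb}(w,b,f)=\tq(w,b-2,f)$), convert to $p_{ww},p_{wb},p_{bb}$ by the inclusion--exclusion identity $\prb{A^c\cap B^c}+\prb{A}+\prb{B}=1+\prb{A\cap B}$, and then feed in Theorem~\ref{thm=approximation} and Proposition~\ref{prop:qTilde}; your identities $p_{ww}=1-2q(w-1,b,f)+\tq(w-2,b,f)$, etc., and the leading term $1-2u+u^2=v^2$ are all correct, and the uniformity argument is the right one.

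One concrete slip in the worked difference, which is not a mere sign convention: since $p_{wb}=1-q(w-1,b,f)-q(w,b-1,f)+\tq(w-1,b-1,f)$, the $q$-terms in $p_{bb}-p_{wb}$ contribute $q(w-1,b,f)-q(w,b-1,f)=p_b-p_w$ with coefficient one, not $-2\bigl(q(w,b-1,f)-q(w-1,b,f)\bigr)$ --- one of the two copies of $q(w,b-1,f)$ coming from $p_{bb}$ is cancelled by the copy sitting inside $p_{wb}$. With the correct coefficient the limit is $\frac1N(\partial_x-\partial_y)v-\frac2N\bigl((1-v)(\partial_x-\partial_y)v\bigr)=\frac1N(2v-1)(\partial_x-\partial_y)v$ as claimed, whereas carrying your factor of two through literally lands on $\frac2N\,v\,(\partial_x-\partial_y)v$, which is a genuinely different coefficient. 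Once that is fixed, the rest (base-point shifts costing $\bigO(1/N^2)$, shrinking $y_0$ to $y_0'$) goes through exactly as in Section~\ref{sec:from_q_to_pb}.
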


 \begin{proof}

To compute these quantities, let $q_{ww} = \prb{\text{neither }B_1\text{ nor
}B_2\text{ are drawn}}$, and define $q_{wb}$, $q_{bb}$ similarly. 
As before, the probability $q_{ww}$ does not depend on the color of the two balls, 
but only on the composition of the remainder of the urn. In terms of the
quantity $\tq$ introduced in Definition~\ref{dfn:qTilde}, we have:
\begin{align*}
  q_{ww}(w,b,f) &= \tq(w-2,b,f)   &
  q_{wb}(w,b,f) &= \tq(w-1,b-1,f), \\
  q_{bb}(w,b,f) &= \tq(w,b-2,f). &
\end{align*}

 Going back to the probabilities of reproduction is easy. Since for any events $A$ and $B$, 
 \( \prb{A^c \cap B^c} + \prb{A} + \prb{B} = 1 + \prb{A\cap B}, \)
 we get
 \begin{align*}
   p_{ww} &= q_{ww} - 1 + 2p_w \\
   p_{wb} &= q_{wb} - 1 + p_w + p_b \\
   p_{bb} &= q_{bb} - 1 + 2p_b. 
 \end{align*}
 The result follows using the previous approximations on $p_w$ and $p_b$ from Theorem~\ref{thm=approximation}
 and the results on $\tq$ (Proposition~\ref{prop:qTilde}).
 \end{proof}

 \begin{proof}[Proof of Proposition~\ref{prop:momentsTilde}]
   
The variance and covariance of $\tX$ and $\tY$ are easily computed in terms of these quantities. 
\begin{equation}
  \label{eq=varcovXtilde}
\begin{aligned}
  \varX{\tX} &= \var{ \sum_{i=1}^w \ind{B_i}} 
 = w \varX{\ind{B_1}} + w(w-1) \covX{\ind{B_1},\ind{B_2}} \\
          &= wp_w(1 - p_w) + w(w-1)(p_{ww} - p_w^2) \\
	  &= w(p_w - p_{ww}) + w^2(p_{ww} - p_w^2)  \\
 \covX{\tX,\tY}&= wb(p_{wb} - p_wp_b) \\
 \varX{\tY} &= b(p_b - p_{bb}) + b^2(p_{bb} - p_b^2) 
\end{aligned}
\end{equation}
   
   Since all expectations are taken starting from $x$, we drop the
   subscript $x$ in the proofs.  By \eqref{eq=varcovXtilde}, all the quantities
   considered may be expressed in terms of $p_w$, $p_b$, $p_{ww}$, $p_{bb}$ and
   $p_{wb}$.  Using the asymptotic results from Theorem~\ref{thm=approximation}
   and Lemma~\ref{lmm:asymptotique_pww}, we get the results after a
   short computation.  For example, the last result follows from:
   \begin{align*}
   &b^2 \var{\tX}  - 2wb\cov{\tX,\tY} + w^2 \var{\tY} \\
   &= b^2w(p_w - p_{ww}) + b^2w^2(p_{ww} - p_w^2) -2w^2b^2(p_{wb} - p_w p_b)              \\
   &\quad + w^2b (p_b - p_{bb}) + w^2b^2 (p_{bb} - p_b^2)       \\
   &= wb\left[ b(p_w - p_{ww}) + w (p_b - p_{bb}) +  wb\left[ p_{ww} - p_w^2 - 2p_{wb} + 2p_wp_b + p_{bb} - p_b^2
   \right] \right] \\
   &= wb\left[ (b+w)v_s(x)(1-v_s(x)) + \bigO(1) +  wb\left[ p_{ww} + p_{bb} - 2p_{wb} -  (p_w - p_b)^2
   \right] \right] \\
   &= wb\left[ (b+w)v_s(x)(1-v_s(x)) + \bigO(1)\right]
   \end{align*}
   where in the last line, we have used the fact that $p_{bb} - p_{wb}$ and
   $p_{ww} - p_{wb}$ have the same leading term at order $1/n$, so that they
   cancel out. 

   The bounds on the higher order moments follow from the fact that 
   $\tX$ and $\tY$ are sums of negatively related indicator variables. 
   For example, $\tX$ is the sum of $w$ indicators, so that
   by the bound~\eqref{eq:momentOrdreTrois} of Theorem~\ref{thm:concentration}, 
   \[ 
     \esp{ \abs{\tX - \esp{\tX}}^3} \leq 12e w^{3/2} \leq C n^{3/2}. \qedhere
   \]
 \end{proof}

\subsection{Infinitesimal mean}
\label{sec:infMean}
From this moment on, until Section~\ref{sec:betaNonZero}, the parameter $\beta$
is fixed to $0$. We set
\[ \tZ = \frac{\tX}{\tX + \tY} = \phi(\tX,\tY) \]
where $\phi:(x,y)\mapsto x/(x+y)$. Let $\mathcal{F}$ be the sigma-field generated by 
$(\tX, \tY)$. Recall that (since $\beta = 0$), $X$ is the proportion of white balls
after a binomial sampling with probability $\tZ$. 
By conditioning, 
\begin{align*}
  \espX{X} 
  &= \espX{\espX{X \middle| \mathcal{F}}}  \\
  &= \espX{\tZ},
\end{align*}
so it makes sense to study the first moment of $\tZ$. 
\begin{lmm}
  [Expectation of $\tZ$]
  \label{lmm:expZTilde}
  The first moment of $\tZ$ is given by
\begin{equation}
  \label{eq:asymptoticsZTilde}
  \esp{\tZ} - x = - \frac{1}{n}\cdot \frac{x(1-x)v'_s(x)}{v_s^2(x)} + \bigO(1/n^{3/2}).
\end{equation}

\end{lmm}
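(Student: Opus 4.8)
The plan is to estimate $\espX{\tZ}$ by a second-order ``delta method''. Write $\tZ = \phi(\tX,\tY)$ with $\phi(a,b) = a/(a+b)$ and set $\Delta_X = \tX - \tx$, $\Delta_Y = \tY - \ty$, which are centered. The cases $x=0$ (so $\tX=0$, $\tZ=0=x$) and $x=1$ (so $\tY=0$, $\tZ=1=x$) are trivial, so I assume $1\leq w\leq n-1$. Since $\phi$ is homogeneous of degree $0$, its derivatives of order $k$ at a point of Euclidean norm of order $r$ are $\bigO(r^{-k})$; moreover $\tx+\ty = wp_w(w,b,f) + bp_b(w,b,f) = n\PAR{xp_w+(1-x)p_b}$, and by Theorems~\ref{thm:avantage} and~\ref{thm=approximation} we have $p_w,p_b = v_s(x)+\bigO(1/n)\geq 1-e^{-s}-\bigO(1/n)$, so $\tx+\ty\geq cn$ for some $c=c(s)>0$ and $n$ large. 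By the Gaussian concentration bound~\eqref{eq:concentration} applied to the sums of negatively related indicators $\tX$ and $\tY$, the event $\mathcal{G} = \SET{\abs{\Delta_X}\vee\abs{\Delta_Y}\leq n^{3/4}}$ has $\prb{\mathcal{G}^c} = \bigO(e^{-\sqrt n/4})$, and on $\mathcal{G}$ the whole segment from $(\tx,\ty)$ to $(\tX,\tY)$ lies in $\SET{a+b\geq cn/2}$. A second-order Taylor expansion with Lagrange remainder on $\mathcal{G}$, using $\phi_{11}(a,b)=-2b/(a+b)^3$, $\phi_{12}(a,b)=(a-b)/(a+b)^3$, $\phi_{22}(a,b)=2a/(a+b)^3$, then gives $\tZ = \phi(\tx,\ty) + \phi_1\Delta_X + \phi_2\Delta_Y + \tfrac12 Q + R$ with $Q = \phi_{11}\Delta_X^2 + 2\phi_{12}\Delta_X\Delta_Y + \phi_{22}\Delta_Y^2$ and $\abs{R}\leq Cn^{-3}\PAR{\abs{\Delta_X}+\abs{\Delta_Y}}^3$, all partials at $(\tx,\ty)$. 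Taking expectations, using $\esp{\Delta_X}=\esp{\Delta_Y}=0$, subtracting the boundary contributions on $\mathcal{G}^c$ (each $\bigO(e^{-\sqrt n/4})$ via the crude bounds $\abs{\Delta_X},\abs{\Delta_Y}\leq n$ and $0\leq\phi\leq1$), and estimating $\esp{R\ind{\mathcal{G}}} = \bigO(n^{-3/2})$ by the third-moment bounds of Proposition~\ref{prop:momentsTilde}, one is left with
\[
  \espX{\tZ} = \phi(\tx,\ty) + \frac{-\ty\,\varX{\tX} + (\tx-\ty)\,\covX{\tX,\tY} + \tx\,\varX{\tY}}{(\tx+\ty)^3} + \bigO(n^{-3/2}).
\]

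Next I would compute the two main terms. For the first, with $w = nx$, $b = n(1-x)$ one gets exactly
\[
  \phi(\tx,\ty) - x = \frac{(1-x)\tx - x\ty}{\tx+\ty} = \frac{-nx(1-x)\PAR{p_b-p_w}}{n\PAR{xp_w+(1-x)p_b}},
\]
whose denominator is $nv_s(x) + \bigO(1)$. The key input is the precise difference of fitnesses: differentiating~\eqref{eq=defVs} by the chain rule gives $v_s'(x) = \tfrac{1}{1+s}(\partial_x v - \partial_y v)\PAR{\tfrac{x}{1+s},\tfrac{1-x}{1+s},\tfrac{s}{1+s}}$, that is $(\partial_x v - \partial_y v)^N(w,b,f) = (1+s)\,v_s'(x)$, while $N = (1+s)n$; hence~\eqref{eq=cvg_differences} yields $p_b - p_w = v_s'(x)/n + \bigO(n^{-2})$. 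Therefore $\phi(\tx,\ty) - x = -x(1-x)v_s'(x)/(n\,v_s(x)) + \bigO(n^{-2})$.

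For the second term, write $\tx = wp_w = wv_s(x) + \bigO(1)$, $\ty = bp_b = bv_s(x)+\bigO(1)$, so $\tx-\ty = (w-b)v_s(x)+\bigO(1)$ and $\tx+\ty = nv_s(x)+\bigO(1)$; since the second moments are $\bigO(n)$, the numerator equals $v_s(x)\PAR{-b\,\varX{\tX}+(w-b)\,\covX{\tX,\tY}+w\,\varX{\tY}} + \bigO(n)$, which by the identity for $-b\,\varX{\tX}+(w-b)\,\covX{\tX,\tY}+w\,\varX{\tY}$ in Proposition~\ref{prop:momentsTilde} equals $n^2 x(1-x)\,v_s(x)v_s'(x)\PAR{v_s(x)-1} + \bigO(n)$. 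Dividing by $(\tx+\ty)^3 = n^3 v_s^3(x) + \bigO(n^2)$ gives
\[
  \frac{-\ty\,\varX{\tX}+(\tx-\ty)\,\covX{\tX,\tY}+\tx\,\varX{\tY}}{(\tx+\ty)^3} = -\frac{x(1-x)v_s'(x)\PAR{1-v_s(x)}}{n\,v_s^2(x)} + \bigO(n^{-2}).
\]
Adding the two contributions, the factors $v_s(x)$ and $1-v_s(x)$ recombine into $\tfrac1n\cdot x(1-x)v_s'(x)/v_s^2(x)$, and the total error is $\bigO(n^{-3/2})$; this is exactly~\eqref{eq:asymptoticsZTilde}. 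All divisions are harmless because $v_s$ is bounded away from $0$ uniformly (Lemma~\ref{lem:propVs}), and the estimates invoked are uniform on the range of $x$ claimed in Proposition~\ref{prop:momentsTilde} --- for $s<1$ and $x$ near $1$ one uses the $\Omega_N(s)$-version of Theorem~\ref{thm=approximation}, whose second defining condition on $\Omega(s)$ is precisely what holds there.

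The main obstacle is the control of the remainder $R$: the derivatives of $\phi$ blow up like $(a+b)^{-k}$ near $a+b=0$, so a Taylor expansion is meaningless unless $\tX+\tY$ is bounded below. This is exactly why one passes through $\mathcal{G}$: the total number of reproductions $\tX+\tY$ concentrates sharply around $nv_s(x)\geq n(1-e^{-s})$ with Gaussian tails (Theorem~\ref{thm:concentration}), so the bad event costs only a super-polynomially small factor while on $\mathcal{G}$ every derivative of $\phi$ along the relevant segment is $\bigO(n^{-k})$. The only other delicate point is that one genuinely needs $p_b-p_w$ to accuracy $\bigO(n^{-2})$, not just $\bigO(n^{-1})$ --- precisely the content of~\eqref{eq=cvg_differences} --- together with the chain-rule identification of $(\partial_x - \partial_y)v$ with $v_s'$.
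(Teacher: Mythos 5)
Your proposal is correct and follows essentially the same route as the paper: a second-order Taylor expansion of $\phi(a,b)=a/(a+b)$ around $(\tx,\ty)$, with the zeroth-order term computed from the $\bigO(1/N^2)$-accurate fitness difference~\eqref{eq=cvg_differences}, the second-order term identified with the linear combination $-\ty\varX{\tX}+(\tx-\ty)\covX{\tX,\tY}+\tx\varX{\tY}$ evaluated via Proposition~\ref{prop:momentsTilde}, and the remainder killed by concentration plus third-moment bounds. The only (harmless) differences are cosmetic: your good event $\SET{\abs{\Delta_X}\vee\abs{\Delta_Y}\leq n^{3/4}}$ replaces the paper's case-split event~\eqref{eq:defGoodEvent}, and you use a Lagrange rather than integral form of the remainder.
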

\begin{crllr}
  The formula \eqref{eq:infMean} holds when $\beta = 0$. 
\end{crllr}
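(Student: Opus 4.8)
The plan is to obtain the corollary directly from Lemma~\ref{lmm:expZTilde} by unwinding the definition of the infinitesimal mean; no new estimate is required, since the entire analytic content sits in the expansion of $\espX{\tZ}$ already established in the lemma. First I would recall from~\eqref{eq:defInfMean} that $b_n(x) = n(\espX{X} - x)$, where $X = X^n_1$ is the proportion of white balls after one step of the chain. Because $\beta = 0$ throughout this part of the paper, the second step of the generation is a binomial resampling with success probability $\tZ = \tX/(\tX+\tY)$, so that $\espX{X \mid \mathcal{F}} = \tZ$ for $\mathcal{F}$ the sigma-field generated by $(\tX,\tY)$. Taking expectations via the tower property yields the identity $\espX{X} = \espX{\tZ}$, exactly as recorded just above the statement of the lemma.

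The next step is purely substitutional. I would insert this identity into the definition of $b_n$ to write $b_n(x) = n(\espX{\tZ} - x)$, and then plug in the expansion~\eqref{eq:asymptoticsZTilde} of Lemma~\ref{lmm:expZTilde}. The factor of $n$ cancels the leading $1/n$, so the principal term becomes $-x(1-x)v'_s(x)/v_s^2(x)$, while the remainder $\bigO(1/n^{3/2})$ is promoted to $\bigO(1/\sqrt{n})$. Comparing with~\eqref{eq:infMean} specialised to $\beta = 0$, for which the contribution $\beta\, x(1-x)$ vanishes, shows that the two expressions agree, which is precisely the assertion of the corollary.

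The single point deserving a word of care is the uniformity of the error. The $\bigO$ in Lemma~\ref{lmm:expZTilde} is understood with the same conventions as in Proposition~\ref{prop:infMeanInfVar}, namely uniform on $\stateSpace\cap[0,1-y_0]$ when $s\geq 1$ and uniform on all of $\stateSpace$ when $s<1$; multiplication by $n$ preserves this uniformity verbatim, so the stated uniformity of~\eqref{eq:infMean} follows at once. I therefore anticipate no genuine obstacle here: the corollary is a one-line consequence of the lemma, obtained by combining the conditioning identity $\espX{X} = \espX{\tZ}$ with the scaling $b_n(x) = n(\espX{\tZ}-x)$.
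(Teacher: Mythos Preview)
Your proposal is correct and matches the paper's approach exactly. In the paper the lemma and corollary share a single proof block: after establishing~\eqref{eq:asymptoticsZTilde}, the paper simply writes ``Multiplying by $n$, we get~\eqref{eq:infMean}, which proves the corollary,'' relying on the identity $\espX{X}=\espX{\tZ}$ recorded just before the lemma---precisely the argument you give.
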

\begin{proof}
  The proportion $\tZ$ is a function of $\tX$ and $\tY$. 
  We wish to apply Taylor's formula to $\phi$ to compare $\espX{\tZ}$ to $\phi\left(\espX{\tX}, \espX{\tY}\right) = 
\phi(\tx,\ty)$. The derivatives of $\phi$ are given by:
\begin{equation}
  \label{eq=deriveesPhi}
\begin{aligned}
  \partial_1\phi(x,y) &= \frac{y}{(x+y)^2}, &
  \partial_2\phi(x,y) &= \frac{-x}{(x+y)^2} \\
  \partial_{11}\phi (x,y) &= \frac{-2y}{(x+y)^3} &
  \partial_{22}\phi (x,y) &= \frac{2x}{(x+y)^3} \\
  \partial_{12}\phi (x,y) &= \frac{x-y}{(x+y)^3}. &
\end{aligned}
\end{equation}

Let us apply Taylor's formula to $\phi$ on the segment $S = [(\tx,\ty), (\tX,\tY)]$.  
\begin{equation}
  \label{eq:TaylorForPhi}
  \phi(\tX,\tY) - \phi(\tx,\ty) = T_1 + T_2 + T_3
\end{equation}
where
\begin{align*}
  T_1 &= \partial_1\phi(\tx,\ty)( \tX - \tx) + \partial_2\phi(\tx,\ty)(\tY - \ty), \\
  T_2 &= \frac{1}{2}\partial_{11}\phi(\tx,\ty)(\tX - \tx)^2
         + \partial_{12}\phi(\tx,\ty)(\tX - \tx)(\tY - \ty)
         + \frac{1}{2}\partial_{22}\phi(\tx,\ty)(\tY - \ty)^2, 
\end{align*}
and $T_3$ is a remainder term which will be considered later. 

We take expectations on both sides, once more dropping the subscript $x$
from the notation.  The first-order term $T_1$ disappears, so
\begin{equation}
  \label{eq:TMinusOne}
  \espX{\tZ} - x = \esp{\phi(\tX,\tY)} - x = (\phi(\tx,\ty) - x) + \esp{T_2} + \esp{T_3}.
\end{equation}
Let us look at these three terms in turn. 
For the first one:
\begin{align}
  \phi(\tx,\ty) - x
  &= \frac{wp_w}{w p_w + b p_b} - \frac{w}{w+b} 
  \notag\\
  &= x\left( \frac{ p_w - xp_w - (1-x)p_b}{xp_w + (1-x)p_b} \right)
  \notag\\
  &= x(1-x)\frac{p_w - p_b}{xp_w + (1-x)p_b} 
  \notag\\
  \label{eq:TZero}
  &= -\frac{1}{n}x(1-x)\frac{v_s'(x)}{v_{s}(x)} + \bigO(1/n^2),
\end{align}
where we used Theorem~\ref{thm=approximation} and the  fact that $N = (1+s)n$ in the last line.

The second term $T_2$ is a bit trickier. 
\[
  \esp{T_2} =  \frac{1}{2}\partial_{11}\phi(\tx,\ty)\var{\tX} + \partial_{12}\phi(\tx,\ty)\cov{\tX,\tY}
  +\frac{1}{2}\partial_{22}\phi(\tx,\ty) \var{\tY}.
\]
First, remark that $\partial_{11}\phi(\tx,\ty) = \frac{1}{n^2v_s(x)^2}\partial_{11}\phi(x,1-x) + \bigO(1/n^3)$, 
and that similar results hold for the other derivatives, so that, using the rough bounds
on the variances from Proposition~\ref{prop:momentsTilde}, we get 
\begin{align*}
  \esp{T_2}
  &=  \frac{1}{n^2v_s(x)^2} \left(\frac{1}{2}\partial_{11}\phi(x,1-x)\var{\tX} + \partial_{12}\phi(x,1-x)\cov{\tX,\tY} \right. \\
  &\qquad \left. +\frac{1}{2}\partial_{22}\phi(x,1-x) \var{\tY}\right) + \bigO(1/n^2). 
\end{align*}
Due to the explicit expression of the derivatives (equation~\eqref{eq=deriveesPhi}), the term between brackets is, 
up to a factor $n$, the one 
that appears in Proposition~\ref{prop:momentsTilde}, so
\begin{equation}
  \label{eq:TTwo}
  \esp{T_2} = 
  \frac{1}{n} x(1-x) \frac{ v'_s(x)(v_s(x)-1)}{v_s(x)^2} + \bigO(1/n^2).
\end{equation}
We will prove in the next section that  $T_3 = \bigO(1/n^{3/2})$. 
Inserting \eqref{eq:TZero} and \eqref{eq:TTwo}
in \eqref{eq:TMinusOne}, we obtain~\eqref{eq:asymptoticsZTilde}:
\begin{equation*}
  \esp{\tZ} - x = - \frac{1}{n}\cdot \frac{x(1-x)v'_s(x)}{v_s^2(x)} + \bigO(1/n^{3/2}).
\end{equation*}
Multiplying  by $n$, we get~\eqref{eq:infMean}, which proves the corollary.
\end{proof}

\subsection{The remainder}
\label{sec:remainder}
Let us bound the remainder term $T_3$ in Taylor's formula~\eqref{eq:TaylorForPhi}.
If we let $z_0 = (x_0,y_0) = (\tx,\ty)$ and $z_1 =
(x_1,y_1) = (\tX,\tY)$ ($z_0$ is fixed and $z_1$ is random), then
$T_3$ can be written as:

\[
  T_3 = \sum_{\alpha, \abs{\alpha} = 3} \frac{3}{\alpha !}  \int_0^1(1-t)^2 D_\alpha \phi(z_t) dt \cdot  (z_1-z_0)^\alpha,
\]
where $z_t = (1-t)z_0 + tz_1$. 
To get the claimed bound on $\esp{T_3}$, it suffices to show that, for any multi-index $\alpha$ of length $3$,
\[
  R_\alpha = \esp{ \int_0^1 \abs{\partial_\alpha \phi (z_t)} dt \abs{ (z_1 - z_0)^\alpha}}  = \bigO(1/n^{3/2}).
\]
Therefore we have to bound the third derivatives of $\phi$
on the segment $[z_0,z_1]$. 
The difficulty here is that $(\tX,\tY)$ may be very close to $(0,0)$, where the 
derivatives of $\phi$ blow up. This problem only occurs if both $\tX$ and $\tY$ 
are small. However, if $x \geq 1/2$, $\tX$ is unlikely to be small, 
and if $x < 1/2$ then $\tY$ should not be too small. This prompts us to introduce the following good event:
\begin{equation}
  A = \begin{cases} 
    \{ \tX \geq \tx/2\} & \text{ if } x\geq 1/2, \\
    \{ \tY \geq \ty/2\} & \text{ if } x <   1/2.
  \end{cases}
  \label{eq:defGoodEvent}
\end{equation}
\paragraph{Step 1: a bad bound on the bad event}
All third derivatives of $\phi$ satisfy: 
\[
  \abs{\partial_\alpha  \phi(x,y) } \leq \frac{C}{(x+y)^4} (\abs{x} + \abs{y}).
\]
Since $1 \leq \tX + \tY\leq n$ (at least one ball is chosen),
$1\leq \tx + \ty = \esp{\tX + \tY} \leq n$, so that for all $t\in[0,1]$,
\begin{equation}
  \label{eq:crudeBound}
  \abs{\partial_\alpha  \phi(z_t) } \leq Cn.
\end{equation}
This bound is not strong but it holds even on the ``bad event'' $A^c$. 

\paragraph{Step 2: a good bound on the good event}
Suppose $x\geq 1/2$, so that on $A$, $\tX \geq \tx/2$. 
By the asymptotic result of Proposition~\ref{prop:momentsTilde} on $\tx$, 
and the fact that $v_s$ is bounded below by $s$, 
\begin{align*}
  \forall t, \quad \abs{\partial_\alpha \phi(z_t) }
  \leq \frac{C n}{(1/2)^4 (\tx)^4} 
  \leq \frac{C'}{n^3}.
\end{align*}
If $x<1/2$, $\tY \geq \ty/2$ on $A$, so 
\begin{align*}
  \forall t, \quad \abs{\partial_\alpha \phi(z_t) }
  \leq \frac{C n}{(1/2)^4 (\ty)^4} 
  \leq \frac{C'}{n^3}.
\end{align*}

\paragraph{Step 3: the good event has very high probability}
Suppose first that $x\geq 1/2$, so $A = \{\tX  \geq \esp{\tX}/2\}$. Intuitively, since $\tx = \esp{\tX}$ is of order $n$ and its 
standard deviation is of order $\sqrt{n}$, the event $\tX \leq \tx/2$ should have very small
probability. This  rigorous proof follows from the deviation bounds established above. Indeed
\begin{align*}
  \prb{A^c} &\leq \prb{ \abs{\tX  - \esp{\tX}} > \esp{\tX}/2} \\
  &\leq \exp\PAR{ - \frac{\esp{\tX}^2}{16 xn} }
\end{align*}
where we used \eqref{eq:concentration} applied to $\tX$, a sum of $xn$ negatively correlated indicator. 
Since $x\geq 1/2$, for some absolute constant $C$ we find that
\begin{equation}
  \label{eq:AtresImprobable}
  \prb{A^c}  \leq \exp\PAR{ - Cn}. 
\end{equation}
If $x<1/2$, $A = \{ \tY \geq \esp{\tY}/2\}$, and we can apply \eqref{eq:concentration} to $\tY$
to see that \eqref{eq:AtresImprobable} still holds. 

\paragraph{Step 4: conclusion}
For any multiindex $\alpha$, using the good bounds on the event $A$, 
and the crude bounds \eqref{eq:crudeBound} and $\abs{\tX - \tx} \leq 2n$, we get:
\begin{align*}
R_\alpha
&= \esp{ \int_0^1 \abs{\partial_\alpha \phi (z_t)} dt \abs{ (z_1 - z_0)^\alpha}} \\
&= \frac{C}{n^3} \esp{\ind{A} \abs{(\tX - \tx)^\alpha}}
   + Cn^4 \exp( - Cn). 
\end{align*}
The third moment bounds from Proposition~\ref{prop:momentsTilde} yield:
\begin{align*}
R_\alpha
&= \frac{C}{n^{3/2}} + Cn^4 \exp( - Cn)   = \bigO(n^{-3/2}). 
\end{align*}
This proves that $\esp{T_3} = \bigO(n^{-3/2})$, and concludes the proof of
the formula \eqref{eq:infMean} for the infinitesimal mean in the case $\beta =0$.

\subsection{Infinitesimal variance} 
\label{sec:infVariance}
Let us first study the second moment of $\tZ$. 
\begin{lmm}[Variance of $\tZ$]
  \label{lmm:varianceZTilde}
  The variance of $\tZ$ is given by
  \begin{equation}
    \label{eq:varianceZTilde}
   \var{\tZ}
   = 
   \frac{1}{n} x(1-x)\frac{v_s(x)(1-v_s(x))}{v_s(x)^2} + \bigO(1/n^{3/2}).
 \end{equation}
\end{lmm}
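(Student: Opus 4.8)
The plan is to mimic exactly the structure of the proof of Lemma~\ref{lmm:expZTilde}, using the same Taylor expansion of $\phi$ on the segment $S = [(\tx,\ty),(\tX,\tY)]$, but this time keeping track of the \emph{second} moment. Writing $\tZ = \phi(\tX,\tY)$ and recalling the decomposition $\phi(\tX,\tY) - \phi(\tx,\ty) = T_1 + T_2 + T_3$ from~\eqref{eq:TaylorForPhi}, we have $\var{\tZ} = \var{\phi(\tX,\tY)}$. The key observation is that $\var{\tZ} = \esp{(\tZ - \esp{\tZ})^2}$, and since $\tZ - \esp{\tZ}$ differs from $T_1$ only by terms that are already $\bigO(1/n)$ in $L^2$ (namely $T_2 - \esp{T_2}$ and $T_3 - \esp{T_3}$), the leading contribution to the variance of $\tZ$ comes entirely from $\var{T_1}$. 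Concretely, $\var{\tZ} = \var{T_1} + 2\cov{T_1, T_2 + T_3} + \var{T_2 + T_3}$, and I would argue that the last two terms are $\bigO(1/n^{3/2})$ (or better) by Cauchy--Schwarz together with the moment bounds from Proposition~\ref{prop:momentsTilde}.

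First I would compute $\var{T_1}$ explicitly. Since
\[
  T_1 = \partial_1\phi(\tx,\ty)(\tX - \tx) + \partial_2\phi(\tx,\ty)(\tY - \ty),
\]
its variance is
\[
  \var{T_1} = (\partial_1\phi(\tx,\ty))^2 \var{\tX} + 2\partial_1\phi(\tx,\ty)\partial_2\phi(\tx,\ty)\cov{\tX,\tY} + (\partial_2\phi(\tx,\ty))^2 \var{\tY}.
\]
Using the expressions~\eqref{eq=deriveesPhi} for the first derivatives at $(\tx,\ty)$ and the asymptotics $\tx = nxv_s(x) + \bigO(1)$, $\ty = n(1-x)v_s(x) + \bigO(1)$ from Proposition~\ref{prop:momentsTilde}, one has $\partial_1\phi(\tx,\ty) = \frac{\ty}{(\tx+\ty)^2} = \frac{(1-x)}{n v_s(x)} + \bigO(1/n^2)$ and similarly $\partial_2\phi(\tx,\ty) = \frac{-x}{n v_s(x)} + \bigO(1/n^2)$. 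Substituting, and crucially recognizing that
\[
  b^2\var{\tX} - 2wb\cov{\tX,\tY} + w^2\var{\tY} = wbn\bigl(v_s(x)(1-v_s(x)) + \bigO(1/n)\bigr)
\]
from the second block of Proposition~\ref{prop:momentsTilde} (with $w = nx$, $b = n(1-x)$, so that $(1-x)^2\var{\tX} - 2x(1-x)\cov{\tX,\tY} + x^2\var{\tY} = n^2 \cdot \frac{wbn}{n^3}(v_s(1-v_s) + \bigO(1/n)) = x(1-x) n (v_s(x)(1-v_s(x)) + \bigO(1/n))$), I obtain
\[
  \var{T_1} = \frac{1}{n^2 v_s(x)^2}\cdot x(1-x)n\bigl(v_s(x)(1-v_s(x)) + \bigO(1/n)\bigr) + \bigO(1/n^2) = \frac{1}{n}\cdot\frac{x(1-x)v_s(x)(1-v_s(x))}{v_s(x)^2} + \bigO(1/n^2),
\]
which is exactly the claimed leading term.

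It then remains to control the cross term and the higher-order variance. For the cross term, by Cauchy--Schwarz $\abs{\cov{T_1,T_2+T_3}} \leq \sqrt{\var{T_1}}\sqrt{\var{T_2+T_3}} = \bigO(n^{-1/2})\cdot \bigO(?)$, so I need $\var{T_2+T_3} = \bigO(1/n^2)$, i.e. $\esp{(T_2+T_3)^2} = \bigO(1/n^2)$. For $T_2$ this follows from the explicit form $T_2 = \frac{1}{2}\partial_{11}\phi(\tx,\ty)(\tX-\tx)^2 + \cdots$, the bounds $\partial_{11}\phi(\tx,\ty) = \bigO(1/n^2)$ etc., and $\esp{(\tX-\tx)^4} = \bigO(n^2)$ (obtained from the sub-Gaussian Laplace-transform bound in Theorem~\ref{thm:concentration}, which controls all moments), giving $\esp{T_2^2} = \bigO(n^{-4})\cdot\bigO(n^2) = \bigO(n^{-2})$. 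For $T_3$ the estimate $\esp{T_3^2} = \bigO(n^{-3})$ should follow by redoing the four-step argument of Section~\ref{sec:remainder}: on the good event $A$ the third derivatives of $\phi$ on $S$ are $\bigO(n^{-3})$, so $T_3^2 = \bigO(n^{-6})\cdot\bigl(\text{poly moments of }(\tX-\tx,\tY-\ty)\text{ of order }6\bigr) = \bigO(n^{-6})\cdot\bigO(n^3) = \bigO(n^{-3})$ there, while on $A^c$ the crude bound $\abs{\partial_\alpha\phi(z_t)} \leq Cn$ and $\abs{z_1-z_0}\leq 2n$ give $T_3^2 \leq Cn^{8}$ but $\prb{A^c} \leq e^{-Cn}$ kills it. Hence $\var{T_2+T_3} \leq 2\esp{T_2^2} + 2\esp{T_3^2} = \bigO(1/n^2)$, so the cross term is $\bigO(n^{-1/2}\cdot n^{-1}) = \bigO(n^{-3/2})$ and $\var{T_2+T_3}$ itself is $\bigO(n^{-2})$, both absorbed into the error. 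The main obstacle is this bookkeeping of $T_3$ in $L^2$ rather than $L^1$: it requires sixth-order moment control on $(\tX - \tx, \tY - \ty)$, which is available because Theorem~\ref{thm:concentration} gives a full sub-Gaussian Laplace-transform comparison (not merely the third-moment bound), so all polynomial moments of the centered reproduction counts are $\bigO(n^{k/2})$; I would state this explicitly as the one extra ingredient needed beyond what was used for the infinitesimal mean.
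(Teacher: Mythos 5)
Your proof is correct, but it takes a genuinely different route from the paper's. The paper computes $\esp{\tZ^2}$ by running a \emph{second} Taylor expansion, this time of $\psi=\phi^2$, which requires computing the second derivatives of $\psi$, expanding $\psi(\tx,\ty)$ in terms of $p_b/p_w$, and then subtracting $\esp{\tZ}^2$; the final formula only emerges after the $v_s'$ contributions from $\esp{T'_2}$, $\psi(\tx,\ty)$ and $\esp{\tZ}^2$ cancel against each other. You instead compute $\var{\tZ}=\var{T_1+T_2+T_3}$ directly from the expansion of $\phi$ already set up for the mean --- a delta-method argument in which the leading term is just the quadratic form $\var{T_1}$, evaluated via the combination $b^2\varX{\tX}-2wb\covX{\tX,\tY}+w^2\varX{\tY}$ from Proposition~\ref{prop:momentsTilde} (the same key input the paper uses). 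Your route is arguably cleaner: no cancellation of $v_s'$ terms, no derivatives of $\psi$, no expansion of $\psi(\tx,\ty)$. The price, which you correctly identify, is that $T_2$ and $T_3$ must be controlled in $L^2$ rather than $L^1$, so you need fourth and sixth centered moments of $(\tX,\tY)$ instead of the second and third moments the paper uses; these do follow from the sub-Gaussian Laplace-transform comparison of Theorem~\ref{thm:concentration}, though the paper never states them, so that moment bound should be recorded as an explicit lemma. One small slip: in your intermediate display the factor \(n^2\cdot\frac{wbn}{n^3}\) equals \(wb=n^2x(1-x)\), not \(nx(1-x)\); the correct normalization is \(\frac{1}{n^2}\bigl(b^2\varX{\tX}-2wb\covX{\tX,\tY}+w^2\varX{\tY}\bigr)=nx(1-x)\bigl(v_s(x)(1-v_s(x))+\bigO(1/n)\bigr)\), which is what you in fact use in the final formula for $\var{T_1}$, so the conclusion is unaffected.
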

\begin{proof}
  Since we already know the behaviour of $\esp{\tZ}$, we only
  need  to compute $\espX{\tZ^2}$. To this end let $\psi(x,y) = \phi(x,y)^2$, so that
$\espX{\tZ^2} = \espX{\psi(\tX,\tY)}$ and we can use  Taylor's formula once
again: $\psi(\tX,\tY) - \psi(\tx,\ty) = T'_1 + T'_2 + T'_3$,
where $T'_i$ is the $i$\textsuperscript{th} order term. 
Taking expectations (dropping once more the subscript $x$) yields:
\[ \esp{\tZ^2} = \psi(\tx,\ty) + \esp{T'_2} + \esp{T'_3}. \]
The term $T'_3$ is treated as before to get:
\[ \esp{T'_3} = \bigO(1/n^{3/2}). \]

To treat $T'_2$ we compute the derivatives of $\psi$:
\begin{equation}
  \label{eq=deriveesPsi}
\begin{aligned}
  \partial_1\psi(x,y) &= \frac{2xy}{(x+y)^3}, &
  \partial_2\psi(x,y) &= \frac{-2x^2}{(x+y)^3} \\
  \partial_{11}\psi (x,y) &= \frac{2y(y-2x)}{(x+y)^4}, &
  \partial_{22}\psi (x,y) &= \frac{6x^2}{(x+y)^4}, \\
  \partial_{12}\psi (x,y) &= \frac{2x(x-2y)}{(x+y)^3}. &
\end{aligned}
\end{equation}
Therefore $\esp{T'_2}$  is given by:
\[
  \esp{T'_2} = 
       \frac{1}{2} \partial_{11}\psi(\tx,\ty) \var{\tX} 
       +           \partial_{12}\psi(\tx,\ty) \cov{\tX,\tY}
       + \frac{1}{2} \partial_{11}\psi(\tx,\ty) \var{\tY}.
\]
As before, we can approximate the derivatives at $(\tx,\ty)$ by the ones at $(x,y)$ since
\[ 
  \abs{\partial_{11}\psi (\tx,\ty)
  - \frac{1}{n^2v_s(x)^2}\partial_{11}\psi (x,1-x)%
  } \leq \bigO(1/n^3).
\]
Using the expressions of the partial derivatives of $\psi$, and rearranging
terms to use the results of Proposition~\ref{prop:momentsTilde}, we get:
\begin{align*}
  &\esp{T'_2}\\
  &= 
       \frac{1}{n^2v_s(x)^2} \left(
       \frac{1}{2}(2y(y-2x)) \var{\tX} 
       +           (2x(x-2y)) \cov{\tX,\tY}
       + \frac{1}{2} 6x^2 \var{\tY}\right)         \\
  &= 
       \frac{2x}{n^2v_s(x)^2} \left(
       -y\var{\tX} + (x-y)\cov{\tX,\tY} + x \var{\tY} \right) \\
  &\qquad +
       \frac{1}{n^2v_s(x)^2} \left(
       y^2\var{\tX} -2xy\cov{\tX,\tY} + x^2 \var{\tY} \right) \\
       &= \frac{x(1-x)}{nv_s(x)^2} \left( 2xv'_s(x)(v_s(x) - 1)
             + v_s(x)(1 - v_s(x))  + \bigO(1/n) \right).
\end{align*}

The last step is to find the behaviour of $\psi(\tx,\ty)$. Once more, by Taylor's formula:
\begin{align*}
  \psi(\tx,\ty) 
  &= \psi(x,(1-x)\frac{p_b}{p_w}) \\
  &= \psi(x,(1-x)(1+ \frac{p_b-p_w}{p_w}))  \\
  &= \psi(x,(1-x)) + \partial_2\psi(x,1-x) \cdot (1-x)\frac{v'_s(x)}{nv_s(x)}  + \bigO(1/n^2) \\
  &= x^2  - \frac{1}{n} 2x^2(1-x)\frac{v'_s(x)}{v_s(x)} + \bigO(1/n^2). 
\end{align*}

Therefore:
\[ \esp{\tZ^2} = x^2 + \frac{x(1-x)}{nv_s(x)^2}\left(
   -2xv'_s(x) + v_s(x)(1-v_s(x)) + \bigO(1/n)\right) .
\]
Since, by \eqref{eq:asymptoticsZTilde}, 
\[
   \esp{\tZ} = x -  \frac{1}{n} x(1-x) \frac{v'_s(x)}{v_s(x)^2} + \bigO(1/n^{3/2}),
\]
we finally obtain the expression \eqref{eq:varianceZTilde} for the variance of $\tZ$.
\end{proof}
\begin{proof}
  [Proof of the formula \eqref{eq:infVar} for the infinitesimal variance]
Since $\beta = 0$, the conditional law of $nX$ given $\mathcal{F}$ is the binomial law $\mathcal{B}(n,\tZ)$. 
The variance of $X$ is given by conditioning on $\mathcal{F}$:
\begin{align*}
  \varX{X}
  &= \espX{\varX{X \middle| \mathcal{F}}} + \varX{\espX{X \middle| \mathcal{F}}} \\
  &= \espX{ \frac{1}{n}\tZ(1-\tZ)} + \varX{\tZ}.
\end{align*}
We rearrange terms on the right hand side to get:
\begin{align*}
  \varX{X}
  &= \frac{1}{n} \espX{\tZ} - \frac{1}{n} \espX{\tZ^2} + \espX{\tZ^2} - \espX{\tZ}^2 \\
  &= (1-1/n) \varX{\tZ}  + \frac{1}{n} \espX{\tZ}\left(1 - \espX{\tZ}\right).
\end{align*}
The asymptotics of $\esp{\tZ}$ and $\varX{\tZ}$ are known from lemmas~\ref{lmm:expZTilde}
and~\ref{lmm:varianceZTilde}. Injecting them in the last equation we get
\begin{align*}
  \varX{X}
  &=\frac{1}{n}x(1-x) \frac{v_s(x)(1-v_s(x))}{v_s^2(x)}   + \frac{1}{n}x(1-x) + \bigO(1/n^{3/2}).
\end{align*}
 The infinitesimal variance is obtained by multiplying by $n$:
 \begin{align*}
   a_n(x) &= x(1-x)\frac{v_s(x)(1-v_s(x))}{v_s^2(x)} + x(1-x) + \bigO(1/n^{1/2}) \\
   &= \frac{1}{v_s(x)} x(1-x) + \bigO(1/n^{1/2}).
   \qedhere
 \end{align*}
\end{proof}

\subsection{No jumps at the limit}
\label{sec:tightness}
We have to show that
\(
  n\espX{ \abs{X - x}^3} \xrightarrow{} 0. 
  \)
Recalling that 
$\tZ = \phi(\tX,\tY) = \tX/(\tX + \tY)$, and using
the trivial bound $(a+b+c)^3 \leq 9(a^3 + b^3 + c^3)$, 
\begin{equation}
  \label{eq:decompositionTightness}
  n \espX{ \abs{X - x}^3}
  \leq 
  9n  \espX{ \abs{X - \tZ}^3}  + 9 n\espX{\abs{\tZ - \frac{\tx}{\tx + \ty}}^3} + 9n\PAR{\frac{\tx}{\tx+\ty} - x}^3.
\end{equation}
For the first term, we condition by the first step:
 \[
   \espX{ \abs{X - \tZ}^3} = \espX{ \esp{\abs{X - \tZ}^3 \middle| \mathcal{F}}}.
 \]
Given $\mathcal{F}$, $n X$ follows a binomial law of parameters $xn$ and 
$\tZ$. Using (for example) the bound \eqref{eq:momentOrdreTrois} (which holds in the more general
negatively dependent case), the whole term is~$\bigO(n^{-1/2})$.

The third term of \eqref{eq:decompositionTightness} is $\bigO(1/n^2)$, thanks to the 
controls on $\tx$ and $\ty$ from Proposition~\ref{prop:momentsTilde}.

Therefore we only have to bound the second term $n\espX{\abs{\tZ - \frac{\tx}{\tx + \ty}}}$. 
Let us reuse the notation $z_t = (1-t) (\tx,\ty) + t (\tX,\tY)$: 
\begin{align*}
  n\espX{\abs{\tZ - \frac{\tx}{\tx + \ty}}^3}
  &= n\espX{\abs{\phi(z_1) - \phi(z_0)}^3}.
\end{align*}
As in Section~\ref{sec:remainder}, we want to bound the derivatives of $\phi$ on the segment $[z_0,z_1]$, 
which is only possible if $\tX + \tY$ is large enough. Recall the good event $A$ from
Equation~\eqref{eq:defGoodEvent}.  On~$A$, it is easy to see that all first derivatives of $\phi$ are bounded by $C/n$ for some absolute
constant $C$, therefore
\begin{align*}
  n\espX{\abs{\tZ - \frac{\tx}{\tx + \ty}}^3}
  &\leq  \frac{C}{n^2} \PAR{\esp{ (\tX - \tx)^3} +  \esp{(\tY - \ty)^3}} + C\prb{A^c} n^4. 
\end{align*}
The third moments are controlled by Proposition~\ref{prop:momentsTilde}, and the probability
of the bad event is exponentially small (by \eqref{eq:AtresImprobable}). This shows that 
\eqref{eq:noJumps} holds, and concludes the proof of the main result when $\beta = 0$.

\subsection{Proofs for the full model}
\label{sec:betaNonZero}
In this final section we show how to compute the infinitesimal
mean and variance in the general case, that is, we prove Proposition~\ref{prop:infMeanInfVar}
when $\beta$ is arbitrary. 
We still denote by $\tX$, $\tY$ the number of white/black reproductions, 
by $\mathcal{F} = \sigma(\tX,\tY)$ the corresponding $\sigma$-field,
and by $\tZ$ the ``raw'' ratio $\tZ = \frac{\tX}{\tX + \tY}$. We define
\[ \tZ_\beta = \frac{ (1 + \beta/n) \tX}{(1+\beta/n)\tX + \tY}, \]
so that, conditionally on $\mathcal{F}$, $nX$ follows a binomial law
of parameters $n$ and $\tZ_\beta$. 
This modified ratio is not far from $\tZ$:
\begin{align}
  \tZ_\beta
  \notag
  &= \frac{ (1+\beta/n) \tX}{ (1+\beta/n) \tX + \tY} 
   = \tZ \left( \frac{ 1+ \beta/n}{(1+\beta/n) \tZ + 1 - \tZ} \right)\\
   \notag
  &= \tZ (1+\beta/n) (1 - (\beta/n)\tZ + \bigO(1/n^2)) \\
  \label{eq:ZTildeBetaAndZTilde}
  &= \tZ \left(1 + (\beta/n) (1 - \tZ) + \bigO(1/n^2)\right),
\end{align}
where the $\bigO$ is uniform on $x$ and $\omega$ since $\tZ$ is bounded. 
Taking the expectation gives
\begin{align*}
  \esp{\tZ_\beta} 
  &= \esp{\tZ} + (\beta/n) \esp{\tZ(1-\tZ)} + \bigO(1/n^2) \\
  &= \esp{\tZ} + (\beta/n) \esp{\tZ}\PAR{1 - \esp{\tZ}} - (\beta/n) \var{\tZ} + \bigO(1/n^2). 
\end{align*}
Now let us recall the results from lemmas~\ref{lmm:expZTilde} and~\ref{lmm:varianceZTilde}:
\begin{align*}
  \esp{\tZ} - x &= - \frac{1}{n}\cdot \frac{x(1-x)v'_s(x)}{v_s^2(x)} 
                   + \bigO(1/n^{3/2}), \\
   \var{\tZ} &= \frac{1}{n} x(1-x)\frac{v_s(x)(1-v_s(x))}{v_s(x)^2} + \bigO(1/n^{3/2}).
 \end{align*}
 This immediately entails 
\[
  \esp{\tZ_\beta} - x = - \frac{1}{n}\cdot \frac{x(1-x)v'_s(x)}{v_s^2(x)} 
                         + x(1-x) \frac{\beta}{n} + \bigO(1/n^{3/2}), 
\]
and proves the general form of the infinitesimal mean
announced in Proposition~\ref{prop:infMeanInfVar}. 

For the variance, squaring~\eqref{eq:ZTildeBetaAndZTilde} and taking expectations 
gives
\[
  \esp{\tZ_\beta^2} = \esp{\tZ^2}  + 2(\beta/n) \esp{\tZ^2(1-\tZ)} + \bigO(1/n^2),
\]
therefore
\begin{align*}
  \var{\tZ_\beta} &= \var{\tZ} + \frac{2\beta}{n} \PAR{
  \esp{\tZ^2(1-\tZ)} - \esp{\tZ}\esp{\tZ(1-\tZ)}%
  }
  + \bigO(1/n^2) \\
  &= \var{\tZ} + \frac{2\beta}{n} \PAR{
    \esp{\tZ(1-\tZ)\left(\tZ - \esp{\tZ}\right)}%
  }
  + \bigO(1/n^2).
\end{align*}
The absolute value of the second term is bounded above by
\( \frac{2\beta}{n} \var{\tZ}^{1/2} \); since $\var{\tZ}$ is
of order $1/n$ we get
\[
  \var{\tZ_\beta} = \var{\tZ} + \bigO(1/n^{3/2}),
\]
which proves that the infinitesimal variance does not depend on $\beta$. 

Finally we have to show that the control on the jump sizes still holds.
Adding one more intermediate term in \ref{eq:decompositionTightness} we get:
  \begin{align*}
  n \esp{ \abs{X - x}^3}
  &\leq 
  16n \bigg(   \esp{ \abs{X - \tZ_\beta}^3}  
            +\esp{ \abs{\tZ_\beta - \tZ}^3} \\
  &\qquad	    + \esp{\abs{\tZ - \frac{\tx}{\tx + \ty}}^3} 
	    + \PAR{\frac{\tx}{\tx+\ty} - x}^3 \bigg)
	  \end{align*}
The first, third and fourth terms are treated exactly as in Section~\ref{sec:tightness}, 
since $\tZ_\beta$ is the conditional expectation of $X$ knowing $\mathcal{F}$. 
For the second term, recalling \eqref{eq:ZTildeBetaAndZTilde}, we get
\[
  \abs{\tZ_\beta - \tZ}^3 = \frac{\beta^3}{n^3} \left(\tZ(1 - \tZ) + \bigO(1/n)\right)^3,
\]
which implies that
\( n \esp{ \abs{\tZ - \tZ_\beta}^3}\) converges to zero.
This concludes the proof of Proposition~\ref{prop:infMeanInfVar} in the general case.

\paragraph*{Acknowledgements.}
This paper stems from discussions with the biologists F.-X. Dechaume-Moncharmont
and M.~Galipaud, who came up with the single generation model for ``indirect'' fitness. 
We thank them for stimulating discussions on the biological aspects 
of the problem. On the mathematical side, our special thanks go
to R.~Eymard for his input on the inverse numerical analysis problem; 
we also thank V.-C. Tran and D.~Coupier for interesting discussions.
Finally we thank J.E.~Taylor and an anonymous reviewer for many valuable comments
on a preliminary version of this work.

  \bibliographystyle{amsalpha}
  \bibliography{spite}

\providecommand{\bysame}{\leavevmode\hbox to3em{\hrulefill}\thinspace}
\providecommand{\MR}{\relax\ifhmode\unskip\space\fi MR }
\providecommand{\MRhref}[2]{%
  \href{http://www.ams.org/mathscinet-getitem?mr=#1}{#2}
}
\providecommand{\href}[2]{#2}
\begin{thebibliography}{RPSWT09}

\bibitem[BHJ92]{BHJ92}
A.~D. Barbour, Lars Holst, and Svante Janson, \emph{Poisson approximation},
  Oxford Studies in Probability, vol.~2, The Clarendon Press Oxford University
  Press, New York, 1992, Oxford Science Publications. \MR{1163825 (93g:60043)}

\bibitem[CFM06]{CFM06}
N.~Champagnat, R.~Ferriere, and S.~Meleard, \emph{{{U}nifying evolutionary
  dynamics: from individual stochastic processes to macroscopic models}}, Theor
  Popul Biol \textbf{69} (2006), no.~3, 297--321.

\bibitem[CFM08]{CFM08}
Nicolas Champagnat, R{\'e}gis Ferri{\`e}re, and Sylvie M{\'e}l{\'e}ard,
  \emph{From individual stochastic processes to macroscopic models in adaptive
  evolution}, Stoch. Models \textbf{24} (2008), no.~suppl. 1, 2--44.
  \MR{2466448 (2010b:60238)}

\bibitem[CS09]{CS09}
Fabio A. C.~C. Chalub and Max~O. Souza, \emph{A non-standard evolution problem
  arising in population genetics}, Commun. Math. Sci. \textbf{7} (2009), no.~2,
  489--502. \MR{2536449 (2010h:92102)}

\bibitem[CS14]{CS14}
\bysame, \emph{The frequency-dependent {W}right-{F}isher model: diffusive and
  non-diffusive approximations}, J. Math. Biol. \textbf{68} (2014), no.~5,
  1089--1133. \MR{3175200}

\bibitem[DB06]{Damiens2006Why}
David Damiens and Guy Boivin, \emph{Why do sperm-depleted parasitoid males
  continue to mate?}, Behavioral Ecology \textbf{17} (2006), no.~1, 138--143.

\bibitem[Dio07]{Dio07}
Francisco Dionisio, \emph{{Selfish and spiteful behaviour through parasites and
  pathogens.}}, Evol. Ecol. Res. \textbf{9} (2007), no.~7, 1199--1210.

\bibitem[Dur96]{Dur96}
Richard Durrett, \emph{Stochastic calculus: a practical introduction},
  Probability and Stochastics Series, CRC PressINC, 1996.

\bibitem[Dur08]{Dur08}
\bysame, \emph{Probability models for {DNA} sequence evolution}, second ed.,
  Probability and its Applications (New York), Springer, New York, 2008.
  \MR{2439767 (2009k:60001)}

\bibitem[EK86]{EK86}
Stewart~N. Ethier and Thomas~G. Kurtz, \emph{Markov processes}, Wiley Series in
  Probability and Mathematical Statistics: Probability and Mathematical
  Statistics, John Wiley \& Sons Inc., New York, 1986, Characterization and
  convergence. \MR{838085 (88a:60130)}

\bibitem[Eth11]{Eth11}
Alison Etheridge, \emph{Some mathematical models from population genetics},
  Lecture Notes in Mathematics, vol. 2012, Springer, Heidelberg, 2011, Lectures
  from the 39th Probability Summer School held in Saint-Flour, 2009.
  \MR{2759587 (2011j:92043)}

\bibitem[Ewe04]{Ewe04}
Warren~J. Ewens, \emph{Mathematical population genetics. {I}}, second ed.,
  Interdisciplinary Applied Mathematics, vol.~27, Springer-Verlag, New York,
  2004, Theoretical introduction. \MR{2026891 (2004k:92001)}

\bibitem[FWR01]{FWR01}
K.~R. Foster, T.~Wenseleers, and F.~L.~W. Ratnieks, \emph{{Spite: Hamilton's
  unproven theory.}}, Annales Zoologici Fennici \textbf{38} (2001), no.~3-4,
  229--238.

\bibitem[Gil74]{Gil74}
J.~H. Gillespie, \emph{{{N}autural selection for within-generation variance in
  offspring number}}, Genetics \textbf{76} (1974), no.~3, 601--606.

\bibitem[Gil75]{Gil75}
\bysame, \emph{{{N}atural selection for within-generation variance in offspring
  number {I}{I}. {D}iscrite haploid models}}, Genetics \textbf{81} (1975),
  no.~2, 403--413.

\bibitem[GS01]{GS01}
Geoffrey~R. Grimmett and David~R. Stirzaker, \emph{Probability and random
  processes}, third ed., Oxford University Press, New York, 2001. \MR{2059709
  (2004m:60002)}

\bibitem[Ham70]{Ham70}
W.~D. Hamilton, \emph{{Selfish and spiteful behaviour in an evolutionary
  model}}, Nature \textbf{228} (1970), no.~5277, 1218--1220.

\bibitem[Jan94]{Jan94}
Svante Janson, \emph{Large deviation inequalities for sums of indicator
  variables}, 1994.

\bibitem[JDP83]{JDP83}
Kumar Joag-Dev and Frank Proschan, \emph{Negative association of random
  variables, with applications}, Ann. Statist. \textbf{11} (1983), no.~1,
  286--295. \MR{684886 (85d:62058)}

\bibitem[Les05]{Les05}
Sabin Lessard, \emph{{Long-term stability from fixation probabilities in finite
  populations: New perspectives for ESS theory.}}, Theoret. Popul. Biology
  \textbf{68} (2005), no.~1, 19--27.

\bibitem[MW07]{MW07}
A.~J. McKane and D.~Waxman, \emph{Singular solutions of the diffusion equation
  of population genetics}, J. Theoret. Biol. \textbf{247} (2007), no.~4,
  849--858. \MR{2479629 (2009m:92082)}

\bibitem[Ric96]{Rice1996Sexually}
W.~R. Rice, \emph{Sexually antagonistic male adaptation triggered by
  experimental arrest of female evolution}, Nature \textbf{381} (1996),
  no.~6579, 232--234.

\bibitem[Ros11]{Ross11}
Nathan Ross, \emph{Fundamentals of {S}tein's method}, Probab. Surv. \textbf{8}
  (2011), 210--293. \MR{2861132 (2012k:60079)}

\bibitem[RPSWT09]{Radhakrishnan2009Multiple}
Preethi Radhakrishnan, Diana P\'{e}rez-Staples, Christopher~W. Weldon, and
  Phillip~W. Taylor, \emph{Multiple mating and sperm depletion in male
  queensland fruit flies: effects on female remating behaviour}, Animal
  Behaviour \textbf{78} (2009), no.~4, 839--846.

\bibitem[Shp07]{Shp07}
M.~Shpak, \emph{{{S}election against demographic stochasticity in
  age-structured populations}}, Genetics \textbf{177} (2007), no.~4,
  2181--2194.

\bibitem[SHR08]{Steiner2008Mating}
Sven Steiner, Nina Henrich, and Joachim Ruther, \emph{Mating with
  sperm-depleted males does not increase female mating frequency in the
  parasitoid \emph{Lariophagus distinguendus}}, Entomologia Experimentalis et
  Applicata \textbf{126} (2008), no.~2, 131--137.

\bibitem[Tay09]{Tay09}
J.~E. Taylor, \emph{{{T}he genealogical consequences of fecundity variance
  polymorphism}}, Genetics \textbf{182} (2009), no.~3, 813--837.

\bibitem[Wax11]{Wax11}
D.~Waxman, \emph{Comparison and content of the {W}right-{F}isher model of
  random genetic drift, the diffusion approximation, and an intermediate
  model}, J. Theoret. Biol. \textbf{269} (2011), 79--87. \MR{2974468}

\end{thebibliography}

  \bigskip

{\footnotesize %
  \noindent Ludovic \textsc{Gouden\`ege},
e-mail: \texttt{goudenege(AT)math.cnrs.fr}

 \medskip

   \noindent Pierre-Andr\'e~\textsc{Zitt},
e-mail: \texttt{pierre-andre.zitt(AT)u-pem.fr}

 \medskip

 \noindent\textsc{LAMA, 
 Universit\'e de Marne-la-Vall\'ee,
 5, boulevard Descartes,
 Cit\'e Descartes - Champs-sur-Marne,
 77454 Marne-la-Vall\'ee Cedex 2, France.}

}
\end{document}